\theoremstyle{definition}
\newtheorem{theorem}{Theorem}
\newtheorem{corollary}[theorem]{Corollary}
\newtheorem{proposition}[theorem]{Proposition}
\newtheorem{lemma}[theorem]{Lemma}
\newtheorem{conjecture}[theorem]{Conjecture}
\newtheorem{definition}[theorem]{Definition}
\newtheorem{example}[theorem]{Example}
\newtheorem{remark}[theorem]{Remark}
\numberwithin{theorem}{section}
\numberwithin{equation}{section}
\newtheorem*{theoremA}{Theorem \ref{thm:A}}
\newtheorem*{theoremB}{Theorem \ref{thm:B}}
\newtheorem*{theoremC}{Theorem \ref{thm:C}}
\newtheorem*{theoremD}{Theorem \ref{thm:D}}
\newtheorem*{theoremE}{Theorem \ref{thm:E}}
\newcommand{\N}{\mathbb{N}}
\newcommand{\Z}{\mathbb{Z}}
\newcommand{\C}{\mathbb{C}}
\newcommand{\Sn}{\mathfrak{S}_n}
\newcommand{\Inv}{\mathrm{Inv}}
\newcommand{\inv}{\mathrm{inv}}
\newcommand{\PSPT}{\mathrm{PSPT}}
\newcommand{\SYT}{\mathrm{SYT}}
\newcommand{\inc}{\mathrm{inc}}
\newcommand{\asc}{\mathrm{asc}}
\newcommand{\xvars}{\mathbf{x}}
\newcommand{\Hess}{\mathrm{Hess}}
\title[Higher Specht bases and $q$-series for certain Hessenberg varieties]{Higher Specht bases and $q$-series for the cohomology rings of certain Hessenberg varieties}
\author{Kyle Salois}
\address{Department of Mathematics, Colorado State University, Fort Collins, CO, USA}
\email{kyle.salois@colostate.edu}
\thanks{The author was partially supported by NSF DMS award number 2054391.}
\date{\today}
\begin{document}

\begin{abstract}
It is conjectured (following the Stanley-Stembridge conjecture) that the cohomology rings of regular semisimple Hessenberg varieties yield permutation representations, but the decompositions of the modules are only known in some cases. For the Hessenberg function $h=(h(1),n,\ldots,n)$, the structure of the cohomology ring was determined by Abe, Horiguchi, and Masuda in 2017. We define two new bases for this cohomology ring, one of which is a higher Specht basis, and the other of which is a permutation basis.  We also examine the transpose Hessenberg variety, indexed by the Hessenberg function $h' = ((n-1)^{n-m},n^m)$, and show that analogous results hold. Further, we give combinatorial bijections between the monomials in the new basis and sets of $P$-tableaux, motivated by the work of Gasharov \cite{Gasharov}, illustrating the connections between the $\Sn$ action on these cohomology rings and the Schur expansion of chromatic symmetric functions.
\end{abstract} 

\maketitle{} 

\onehalfspacing

\section{Introduction}

Hessenberg varieties, originally defined in \cite{DePrSh} by De Mari, Procesi, and Shayman, are subvarieties of the full flag variety with deep connections to algebraic geometry, representation theory, and symmetric function theory. Given a Hessenberg function $h:[n]\to [n]$ and an $n\times n$ matrix $X$ over $\C$, the associated Hessenberg variety is $$ \Hess(X,h) := \{ V_{\bullet}\in \mathrm{Fl}(\C^n)\,|\,XV_j \subseteq V_{h(j)} \textrm{ for all } 1\leq j\leq n\}$$ where $\mathrm{Fl}(\C^n)$ is the flag variety of $\C^n$ consisting of sequences $V_{\bullet} = (V_1 \subset V_2 \subset \ldots \subset V_n = \C^n)$ of nested subspaces with $\mathrm{dim}(V_i) = i$. Different specializations of Hessenberg varieties include $\mathrm{Fl}(\C^n)$ itself, Springer fibers, and Peterson varieties. The study of Hessenberg varieties has yielded many results, including those found in \cite{AHM,AbHaHoMa,BC18,CHL20,GP16,HHMPT}. In particular, Hessenberg varieties are smooth when $X=S$ is a regular semisimple matrix, and the cohomology ring is known to be the quotient of a direct sum of polynomial rings. Further, in \cite{Ty07}, Tymoczko defines an action of $\Sn$ on the cohomology ring $H^{*}(\Hess(S,h))$ in this case. 

In \cite{StanCSF}, Stanley defined the chromatic symmetric function, which is a generalization of the chromatic polynomial of a graph, defined as follows: $$ X_{G}(\xvars) = \sum_{\kappa:V(G)\to \N} x_{\kappa(1)}\cdots x_{\kappa(n)}$$ where the sum is over proper colorings of the vertices of $G$, and the monomials keep track of how many times each color was used in the coloring. The Stanley-Stembridge conjecture \cite{StanCSF, StSt} states that for certain graphs $G$, the chromatic symmetric function $X_G$ can be expanded with nonnegative coefficients in the elementary symmetric function basis. In \cite{Gasharov}, Gasharov showed how to expand the chromatic symmetric function for these graphs in the Schur basis with nonnegative coefficients using $P$-tableaux. 

In \cite{SW}, Shareshian and Wachs defined a quasisymmetric generalization $X_G(\xvars;q)$ of $X_G(\xvars)$, proved that it is symmetric for the class of graphs studied in this problem, and extend Gasharov's result on $P$-tableaux to the quasisymmetric case. They further conjectured that the $\Sn$-representation of $H^*(\Hess(S,h))$ is related to the chromatic quasisymmetric function via the graded Frobenius map. This conjecture was proven in \cite{BC18} by Brosnan and Chow, and separately in \cite{GP16} by Guay-Paquet. 
From these sources, results on the cohomology rings of Hessenberg varieties can be translated to results on chromatic (quasi)symmetric functions, and likewise in reverse. In particular, showing that the $\Sn$ action on $H^*(\Hess(S,h))$ is a permutation representation would imply that the Stanley-Stembridge conjecture holds. More details on this problem can be found in \cite{SW}.

A refined understanding of the cohomology ring of regular semisimple Hessenberg varieties when $h=(h(1),n,\ldots, n)$ was given by Abe, Horiguchi, and Masuda in \cite{AHM} as the quotient of a single polynomial ring in two sets of variables. The $\Sn$-action on this cohomology ring is more accessible, and so connections can be drawn more easily between the cohomology ring and the corresponding symmetric functions. 

In this paper, we examine the presentation of $H^{*}(\Hess(S,h))$ for the Hessenberg functions $h=(h(1),n,\ldots,n)$ and $h'=((n-1)^{n-m},n^m)$, and draw connections between the monomial basis and $P$-tableaux. We are primarily motivated by Gasharov's expansion of $X_G(\xvars)$ into Schur functions using $P$-tableaux, and the generalization of this result by Shareshian and Wachs. Our first main result is the following theorem, which provides a higher Specht basis for the cohomology ring.

\begin{theorem}\label{thm:A}
The following sets form a basis of $H^{*}(\Hess(S,h))$ when $h=(h(1),n,\ldots,n)$: 
\begin{equation*}
    \begin{split}
    x_1^{i_1}x_2^{i_2}\cdots x_n^{i_n} \,\,\,\, &  \mbox{ which does not contain the factor } \prod_{\ell=1}^{h(1)} x_{\ell}  \\
    x_n^{\ell_1}x_{n-1}^{\ell_2}\cdots x_2^{\ell_{n-1}} (y_{k+1}-y_1) \,\,\,\, & \mbox{ which does not contain the factor } \prod_{\ell=h(1)+1}^{n} x_{\ell} 
    \end{split}
\end{equation*}
running over all $0\leq i_j \leq n-j$ in the first equation, and over all $0\leq \ell_j\leq n-1-j$, and $1\leq k\leq n-1$ in the second equation. We denote these sets of monomials $B_1$ and $B_3$. 
\end{theorem}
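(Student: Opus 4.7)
The plan is to establish the basis property by combining an enumerative check with a spanning argument, using the presentation of $H^*(\Hess(S,h))$ from Abe, Horiguchi, and Masuda \cite{AHM}. Because every regular semisimple Hessenberg variety has $\dim_{\C} H^*(\Hess(S,h)) = n!$, it suffices to show that $|B_1| + |B_3| = n!$ and that $B_1 \cup B_3$ spans the quotient ring; linear independence then follows for free from the dimension match.

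For the cardinality, I would count directly. Among the $n!$ monomials $x_1^{i_1}\cdots x_n^{i_n}$ satisfying $0 \leq i_j \leq n-j$, exactly $(n-h(1))(n-1)!$ contain the factor $\prod_{\ell=1}^{h(1)} x_\ell$, which leaves $|B_1| = h(1)(n-1)!$. A parallel count on the $B_3$ side shows that the number of exponent vectors $(\ell_1,\ldots,\ell_{n-1})$ with $0 \leq \ell_j \leq n-1-j$ avoiding the factor $\prod_{\ell=h(1)+1}^n x_\ell$ is $(n-2)!(n-h(1))$; multiplying by the $n-1$ choices of $k$ gives $|B_3| = (n-1)!(n-h(1))$. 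Summing, $|B_1| + |B_3| = n \cdot (n-1)! = n!$ as required.

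The spanning step is the crux of the argument. Working modulo the AHM ideal, I would fix a monomial order and argue by induction on this order. The standard flag-variety relations allow me to reduce any $x$-monomial so that $0 \leq i_j \leq n-j$. The Hessenberg-specific relation in \cite{AHM}, which expresses the product $\prod_{\ell=1}^{h(1)} x_\ell$ in terms of descending-index monomials in $x_n, x_{n-1}, \ldots, x_2$ multiplied by differences $y_{k+1} - y_1$, then lets me rewrite any remaining monomial containing $\prod_{\ell=1}^{h(1)} x_\ell$ as a linear combination of terms of the shape defining $B_3$, together with tail terms of strictly lower order. Iterating the reduction handles the tails, and any $B_3$-shaped term produced that still contains $\prod_{\ell=h(1)+1}^n x_\ell$ is reduced by the same relation run in the opposite direction.

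The main obstacle will be careful bookkeeping in this reduction: I need to verify that the rewriting procedure is well-defined, that it respects the exponent constraints in each of $B_1$ and $B_3$, and above all that the induction terminates. The asymmetry between the ascending indexing in $B_1$ and the descending indexing in $B_3$, combined with the role of the $y_{k+1}-y_1$ factor as the bridge between them, suggests that a lexicographic order giving higher weight to $x_1,\ldots,x_{h(1)}$ than to the remaining $x$-variables will make the reduction monotone. Once this reduction procedure is set up and shown to converge, spanning is immediate, and the theorem follows from the cardinality computation above.
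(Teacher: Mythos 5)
Your route differs from the paper's: you propose to verify $|B_1|+|B_3|=n!$ and then establish spanning by a Gr\"obner-style reduction modulo the AHM ideal, whereas the paper simply takes the already-known basis $B_1\cup B_2$ of Proposition \ref{thm:AHM4.5} as given and exhibits an invertible (block, and after row operations upper-triangular) transition matrix from $B_1\cup B_2$ to $B_1\cup B_3$. Your cardinality count is correct, and the dimension-plus-spanning strategy is sound in principle. But the spanning step, which you correctly identify as the crux, rests on a relation that is not in the presentation: there is no generator of $I$ that ``expresses the product $\prod_{\ell=1}^{h(1)}x_\ell$ in terms of descending-index monomials in $x_n,\ldots,x_2$ multiplied by differences $y_{k+1}-y_1$.'' The actual Hessenberg-specific relations are $y_k\prod_{\ell=h(1)+1}^{n}(-x_\ell)=\prod_{\ell=2}^{n}(-x_\ell)$ and $\sum_k y_k=\prod_{\ell=2}^{h(1)}(x_1-x_\ell)$; the first only fires on monomials divisible by the full product $x_2\cdots x_n$, and the second has leading term $x_1^{h(1)-1}$, so neither reduces a generic $x$-monomial divisible by $x_1\cdots x_{h(1)}$. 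Making your reduction work would amount to re-proving AHM's Remark 4.5 from scratch, which is precisely the labor the paper's argument avoids by citing it.

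A second, smaller issue: the differences $y_{k+1}-y_1$ span only an $(n-1)$-dimensional subspace of the span of $y_1,\ldots,y_n$, so your reduction must also absorb the complementary direction $\sum_k y_k$ via relation $(4)$, and recovering $y_1$ alone from the differences together with $\sum_k y_k$ introduces a factor of $n$ in the denominator. This is harmless over $\C$ (and is visible in the paper's transition matrix, whose degree-$0$ block has determinant $\pm n$), but it means your reduction cannot be carried out integrally, and you should say explicitly over which coefficient ring you are working. The cleanest fix is to replace your spanning argument with the paper's: for $k<n$ the element $x_n^{\ell_1}\cdots x_2^{\ell_{n-1}}(y_{k}-y_1)$ is literally a difference of two $B_2$ basis elements, and for $k=n$ one uses relation $(4)$ together with Lemma \ref{lem:newbasislem1} to write $x_n^{\ell_1}\cdots x_2^{\ell_{n-1}}(y_n-y_1)$ as a combination of $B_2$ elements and $B_1$ elements; the resulting transition matrix is then manifestly invertible, and your counting argument becomes unnecessary.
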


Higher Specht bases have the advantage of more clearly displaying the decomposition of the corresponding $\Sn$ representation into irreducibles, giving a deeper understanding of these cohomology rings. Since the basis elements of the irreducible Specht modules are in bijection with standard tableaux, we are motivated to find bijections between the monomials in Theorem \ref{thm:A} and certain sets of tableaux. These results are summarized in the following theorems. 

\begin{theorem}\label{thm:B}
    There exists a combinatorial, weight-preserving bijection between monomial basis elements of the first kind and the set of $P_h$-tableaux of shape $(1^n)$. 
\end{theorem}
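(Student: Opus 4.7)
The plan is to construct an explicit weight-preserving combinatorial bijection. Since $h = (h(1), n, \ldots, n)$ yields a natural unit interval order $P_h$ with $i <_{P_h} j$ exactly when $i = 1$ and $j > h(1)$, a $P_h$-tableau of shape $(1^n)$ is nothing but a permutation $(a_1, \ldots, a_n)$ of $[n]$ in which the entry directly above $1$ (when $1$ is not at the top) belongs to $\{2, \ldots, h(1)\}$. A brief inclusion-exclusion shows that both $B_1$ and the set of such tableaux have cardinality $h(1)(n-1)!$: on the monomial side, from the $n!$ total lattice points of $\prod_j [0, n-j]$ one subtracts the $(n-1)(n-h(1))(n-2)!$ monomials containing the full factor $\prod_{\ell=1}^{h(1)} x_\ell$, while on the tableau side the $(n-1)!$ tableaux with $a_1 = 1$ combine with the $(h(1)-1)(n-1)!$ tableaux where $1$ is preceded by an element of $\{2, \ldots, h(1)\}$.

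My approach to the bijection is to leverage a natural decomposition of both sides into $h(1)$ subsets of size $(n-1)!$: the monomials by the smallest index $s \in [h(1)]$ for which $i_s = 0$, and the tableaux by the value $s' \in [h(1)]$ of the entry directly above $1$, with $s' = 1$ encoding the case $a_1 = 1$. Within each class one can apply a Lehmer-code-style identification: for $s = 1$ the monomial $x_2^{i_2}\cdots x_n^{i_n}$ corresponds to the tableau $(1, b_2, \ldots, b_n)$ whose tail has Lehmer code $(i_2, \ldots, i_n)$ on the alphabet $\{2, \ldots, n\}$; for $s \ge 2$ the reduced exponent vector $(i_1 - 1, \ldots, i_{s-1} - 1, i_{s+1}, \ldots, i_n)$ indexes a permutation of the $n-1$ items consisting of the block $(s, 1)$ together with $\{2, \ldots, n\}\setminus\{s\}$, which unfolds to produce the tableau.

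The principal obstacle is establishing weight preservation. With the monomial weight taken to be total degree and the tableau weight $\inv_{P_h}(T) = |\{i < j : a_i, a_j \text{ incomparable in } P_h, \ a_i > a_j\}|$, the class-by-class Lehmer-code correspondence above need not preserve weight directly, because $\inv_{P_h}$ differs from the standard inversion count by the number of entries exceeding $h(1)$ that precede $1$---a quantity that varies within a single $s'$-class. My plan is to first verify the generating function identity $\sum_T q^{\inv_{P_h}(T)} = \sum_{x^I \in B_1} q^{\deg(x^I)}$ by induction on $n$ and $h(1)$ together with the compensation formula $\inv_{P_h}(T) = \inv(T) - |\{i < k : a_i > h(1)\}|$ (where $k$ denotes the position of $1$ in $T$), and then refine the classwise pairing into a weight-preserving bijection by a local redistribution on the symmetric difference of ``shared'' permutations in $\Sn$, via a canonical correction depending only on the count of high entries placed above $1$. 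The key technical point is that the defect between the classwise pairing and a weight-preserving one is entirely controlled by this high-entry data, which can be canonically rearranged without leaving the class.
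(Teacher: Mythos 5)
Your setup is sound: the description of $P_h$ (only relations $1<_{P_h}j$ for $j>h(1)$), the identification of $(1^n)$-shaped $P_h$-tableaux with permutations subject to a single adjacency constraint at the entry $1$, the count $|B_1|=h(1)(n-1)!$, and the matching decomposition of both sides into $h(1)$ classes of size $(n-1)!$ are all correct (modulo a top/bottom orientation convention that differs from the paper's but is internally consistent). However, the proof has a genuine gap exactly where the theorem has its content: the bijection must be \emph{weight-preserving}, and you concede that your classwise Lehmer-code pairing is not. Your proposed repair --- first establish the generating-function identity $\sum_T q^{\inv_{P_h}(T)}=\sum_{x^I\in B_1}q^{\deg x^I}$ by an induction you do not carry out, then ``refine the classwise pairing\dots by a local redistribution on the symmetric difference of shared permutations\dots via a canonical correction'' --- is a plan, not an argument. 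Equality of generating functions does not by itself produce a combinatorial bijection, and no rule is given for the ``redistribution''; the assertion that the defect ``is entirely controlled by the high-entry data'' and ``can be canonically rearranged without leaving the class'' is precisely the claim that needs proof. As written, the weight-preservation step would not survive scrutiny.

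For comparison, the paper avoids this entirely by building the tableau directly from the exponent vector: insert $k=n-1,\dots,1$ into a column so that $k$ sits above exactly $i_k$ existing entries (so the intermediate column, viewed as a tableau for the totally incomparable poset $P_N$, has exactly $i_k$ inversions with $k$ as the smaller entry, hence total inversion number equal to the degree), and then perform a single slide of $k'$ --- the smallest index in $\{1,\dots,h(1)\}$ with $i_{k'}=0$, guaranteed to exist by the defining condition on $B_1$ --- to the cell directly below the $1$, which restores the $P_h$-tableau condition. The inverse simply undoes the slide and reads off $P_N$-inversions. The slide is the one piece of ``local redistribution'' you are searching for, and it is applied uniformly rather than class by class. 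If you want to rescue your approach, you would need to make your correction rule this explicit and verify that it both lands in the correct tableau class and converts standard inversions into $P_h$-inversions with the right compensation $\inv_{P_h}(T)=\inv(T)-|\{\text{entries}>h(1)\text{ on the far side of }1\}|$; at that point you would essentially have rediscovered the paper's insertion-and-slide map.
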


\begin{theorem}\label{thm:C}
    There exists a combinatorial, weight-preserving bijection between monomial basis elements of the second kind with the set of pairs $(S,T)$ where $S$ is a standard tableau, and $T$ is a $P_h$-tableau, both of shape $(2,1^{n-2})$.
\end{theorem}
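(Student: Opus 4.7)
The plan is to construct an explicit weight-preserving bijection by decomposing both sides into comparable components. A standard tableau $S$ of shape $(2, 1^{n-2})$ is uniquely determined by its top-right entry $v \in \{2, \ldots, n\}$, so setting $v = k + 1$ provides a bijection between the $n - 1$ possible indices $k$ and the $n - 1$ standard tableaux. For a $P_h$-tableau $T$ of the same shape, the only nontrivial relations in $P_h$ are $1 <_{P_h} j$ for $j > h(1)$; hence the row condition forces the top row to be $(1, b)$ with $b \in \{h(1)+1, \ldots, n\}$, and the column condition on the first column becomes vacuous once $1$ is placed at the top. Therefore $T$ is parameterized by $b$ together with an arbitrary permutation $\sigma$ of $\{2, \ldots, n\} \setminus \{b\}$ down the first column, giving a total of $(n - h(1))(n-2)!$ tableaux, which matches the count of exponent tuples $(\ell_1, \ldots, \ell_{n-1})$ satisfying the exclusion constraint.

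The heart of the proof is then a bijection between such exponent tuples and pairs $(b, \sigma)$. My proposal is to encode $(b, \sigma)$ as the single sequence $(b, \sigma_1, \ldots, \sigma_{n-2})$ --- a permutation of $\{2, \ldots, n\}$ --- and read off the exponents via a Lehmer-code-style statistic. A small-case check at $n = 4, h(1) = 2$ shows that the naive Lehmer code does not align the top-right constraint $b > h(1)$ with the exclusion condition ``some $\ell_j = 0$ for $j \leq n - h(1)$'', so a modified encoding is needed, tracking either the first-zero position of the exponent tuple or the position of the largest element in the sequence, in order to bridge the two constraints.

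Weight-preservation is verified by matching the total $x$-degree $\sum_j \ell_j$ of the monomial to a combinatorial statistic on $(b, \sigma)$ (such as a weighted descent or inversion count), with the degree-one factor $(y_{k+1} - y_1)$ accounting for the contribution from $S$. The main obstacle is the precise formulation of the encoding that simultaneously respects enumeration, both constraints, and the weight grading in full generality; an induction on $n$, peeling off the bottommost entry of the first column to reduce to the analogous bijection for smaller $n$, is likely to provide the cleanest proof of bijectivity.
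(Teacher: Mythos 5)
Your setup is sound---you correctly reduce $S$ to the choice of its second bottom-row entry $k+1$, observe that a $P_h$-tableau of shape $(2,1^{n-2})$ is a pair $(b,\sigma)$ with $b\in\{h(1)+1,\ldots,n\}$ in the bottom row and $\sigma$ an arbitrary ordering of the remaining entries in the first column, and match the count $(n-h(1))(n-2)!$ against the admissible exponent tuples. But the theorem asks for an explicit combinatorial, weight-preserving bijection, and that is exactly the piece you leave unconstructed: you acknowledge that the naive Lehmer code fails at $n=4$, $h(1)=2$, and that ``a modified encoding is needed'' without supplying it. Equality of cardinalities alone does not close this gap.

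The paper's resolution is precisely the missing encoding, and it hinges on a choice you circled but did not pin down. Given $x_n^{\ell_1}\cdots x_2^{\ell_{n-1}}(y_k-y_1)$, one takes $b=j$ to be the \emph{largest} index in $\{h(1)+1,\ldots,n\}$ with $\ell_{n-j+1}=0$ (which exists by the exclusion condition), places $1$ and $j$ in the bottom row, and then inserts each remaining $i$ into the first column beneath exactly $(i-2)-\ell_{n-i+1}$ of the entries already present; equivalently, $\ell_{n-i+1}$ records the number of \emph{missing} potential inversions $(\cdot,i)$ with $i$ as the larger entry. The reason ``largest zero index'' is the right bridge between your two constraints is the inverse direction: in any such $P_h$-tableau the bottom-row entry $m$ realizes all $m-2$ of its potential inversions as larger entry (so its exponent is $0$), while every first-column entry $i>m$ necessarily misses the inversion $(m,i)$ (since $m$ sits in the lowest row), forcing $\ell_{n-i+1}\geq 1$; hence $m$ is recoverable as the largest zero-exponent index. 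Without this observation the map is not invertible, which is presumably why your small-case check broke. Your suggested fallback of induction on $n$ is not needed once this direct construction is in hand, but as written the proposal stops short of a proof.
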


Further, in the case of regular nilpotent Hessenberg varieties, the cohomology ring is isomorphic to the $\Sn$-fixed points of the regular semisimple case. Let $N$ be a regular nilpotent matrix. We display a bijection between known basis elements of $H^{*}(\Hess(N,h))$ and the set of $P_h$-tableaux of shape $(1^n)$ for any Hessenberg function $h$.

\begin{theorem}\label{thm:D}
    There exists a combinatorial, weight-preserving bijection between basis elements of $H^{*}(\Hess(N,h))$ and the set of $P_h$-tableaux of shape $(1^n)$. 
\end{theorem}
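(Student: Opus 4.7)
The plan is to work with an explicit monomial basis of $H^{*}(\Hess(N,h))$ and exhibit a direct combinatorial bijection with $P_h$-tableaux of shape $(1^n)$, with weights matched through the cohomological grading.

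First I would fix a known monomial basis of $H^{*}(\Hess(N,h))$, such as the standard basis arising from the Abe, Horiguchi, and Masuda presentation or an equivalent Hessenberg Schubert basis from the literature. Such a basis consists of monomials $\prod_i x_i^{b_i}$ indexed by exponent vectors $(b_1,\ldots,b_n)$ whose bounds depend on $h$, so that the total count is $\prod_i (h(i)-i+1)$ and the Poincar\'e polynomial is $\prod_i [h(i)-i+1]_q$. That this matches the graded count of $P_h$-tableaux of shape $(1^n)$ follows from the isomorphism $H^{*}(\Hess(N,h)) \cong H^{*}(\Hess(S,h))^{\Sn}$, combined with the Brosnan-Chow and Guay-Paquet theorems and Gasharov's Schur expansion of the chromatic quasisymmetric function, giving a purely numerical cardinality check before the combinatorial bijection is constructed.

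Next I would construct the bijection algorithmically. Given an exponent vector $(b_1,\ldots,b_n)$, I would form a permutation $w \in \Sn$ (viewed as a $P_h$-tableau of shape $(1^n)$) by sequentially inserting the values $1, 2, \ldots, n$ (or in reverse order) into a growing word, with each insertion position controlled by the corresponding $b_i$. The Hessenberg bound $b_i \le h(i) - i$ is precisely the constraint that prevents the insertion from creating a forbidden $P_h$-descent between adjacent positions, so the image lies in the set of $P_h$-tableaux. Conversely, given a $P_h$-tableau $w$, the inverse map reads off $(b_i)$ as a displacement statistic recording how far each entry of $w$ has been moved from its canonical position.

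Weight preservation then reduces to showing that the monomial degree $\sum_i b_i$ equals the natural $G_h$-inversion statistic $\inv_{G_h}(w)$ appearing in the Shareshian-Wachs $q$-refinement of Gasharov, which follows if each unit of $b_i$ introduced by the insertion corresponds to exactly one new $G_h$-inversion in $w$. The main obstacle will be verifying uniformly across all Hessenberg functions $h$ that the insertion algorithm is well-defined into $P_h$-tableaux, is a genuine bijection, and intertwines the two statistics; this is most naturally handled by induction on $n$, with a careful local analysis of how the structure of $h$ at position $i$ constrains exactly where the value $i$ is permitted to be inserted.
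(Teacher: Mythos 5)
Your proposal is correct and takes essentially the same route as the paper: the paper uses exactly the monomial basis $\mathcal{N}_h$ with exponents $0\le i_k\le h(k)-k$, builds the $P_h$-tableau of shape $(1^n)$ by inserting $n, n-1,\ldots,1$ into a column so that each $k$ sits above exactly $i_k$ of the $h(k)-k$ larger entries incomparable to it in $P_h$, defines the inverse by reading off the number of $P_h$-inversions with $k$ as the smaller entry, and gets weight preservation because each unit of $i_k$ contributes exactly one inversion. The preliminary cardinality check via Brosnan--Chow, Guay-Paquet, and Gasharov is unnecessary for the direct bijection but harmless.
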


As an application, we use these bijections to develop an understanding of the Poincar\'{e} polynomial via an argument counting $P_h$-tableaux with an inversion statistic. 

In addition to the higher Specht basis for $H^{*}(\Hess(S,h))$ when $h=(h(1),n,\ldots,n)$, we show that there is a natural way of defining a basis which decomposes into a sum of permutation representations. This result directly connects to the Stanley-Stembridge conjecture, since the Frobenius character of a permutation representation (after applying the involution $\omega$ on symmetric function) yields an $e$-positive symmetric function.

Lastly, we show that many of the arguments for the Hessenberg function $h=(h(1),n,\ldots, n)$ also work for the so-called transpose Hessenberg function $h'=((n-1)^{n-m},n^m)$. The polynomial generators are defined in a similar matter, and share many of the properties from the initial case. 

\begin{theorem}\label{thm:E}
    The following sets form a higher Specht basis of $H^{*}(\Hess(S,h))$ when the Hessenberg function is $h=((n-1)^{n-m},n^m)$:
\begin{equation*}
    \begin{split}
    x_1^{i_1}x_2^{i_2}\cdots x_n^{i_n} \,\,\,\, &  \mbox{ which does not contain the factor } \prod_{\ell=n-m+1}^{n} x_{\ell}  \\
    x_{n-1}^{\ell_1}x_{n-2}^{\ell_2}\cdots x_1^{\ell_{n-1}} (y_{k+1}-y_1) \,\,\,\, & \mbox{ which does not contain the factor } \prod_{\ell=1}^{n-m} x_{\ell} 
    \end{split}
\end{equation*} running over all $0\leq i_j\leq j-1$ in the first equation, and over all $0\leq \ell_j \leq j-1$ and $2\leq k\leq n$ in the second equation. 
\end{theorem}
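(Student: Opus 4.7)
The plan is to imitate the proof of Theorem \ref{thm:A}, exploiting the fact that $h' = ((n-1)^{n-m}, n^m)$ is the transpose of the Hessenberg function $(m, n, \ldots, n)$ in the sense that their respective sets of forbidden positions are reflections of one another across the anti-diagonal. Accordingly, my strategy is to transport the basis of Theorem \ref{thm:A} through the involution $x_i \leftrightarrow x_{n+1-i}$, $y_i \leftrightarrow y_{n+1-i}$ on the ambient polynomial ring, after verifying that this involution induces an isomorphism from the Abe--Horiguchi--Masuda quotient ring for $(m, n, \ldots, n)$ onto a parallel presentation of $H^{*}(\Hess(S, h'))$.

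First I would write down the presentation of $H^{*}(\Hess(S, h'))$ alongside the presentation used to prove Theorem \ref{thm:A}. The defining relations should be mirror images of the original ones: the symmetric relations in the $x_i$'s persist, and the Hessenberg-specific relations have the parameter $h(1)$ replaced by $n - m$ with all indexing reversed. Given this, the involution above sends relations to relations and hence carries any basis of the original ring to a basis of the new ring. Applying it to $B_1$ and $B_3$: the Artin-type condition $0 \leq i_j \leq n - j$ becomes $0 \leq i_j \leq j - 1$; the forbidden product $\prod_{\ell=1}^{h(1)} x_{\ell}$ becomes $\prod_{\ell = n-m+1}^{n} x_{\ell}$; the monomial $x_n^{\ell_1} \cdots x_2^{\ell_{n-1}}$ becomes $x_1^{\ell_1} \cdots x_{n-1}^{\ell_{n-1}}$, which, after relabeling $\ell_j \mapsto \ell_{n-j}$, matches $x_{n-1}^{\ell_1} \cdots x_1^{\ell_{n-1}}$ with the reversed inequality $0 \leq \ell_j \leq j-1$; and the factor $(y_{k+1} - y_1)$ transforms, up to overall sign and a shift in the index of summation, into a factor of the form $(y_{k'+1} - y_1)$ with the reindexed range $2 \leq k' \leq n$. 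Matching the forbidden product $\prod_{\ell=h(1)+1}^{n} x_{\ell}$ to $\prod_{\ell=1}^{n-m} x_{\ell}$ under the same involution completes the dictionary.

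The main obstacle I anticipate is verifying that the defining ideal really does behave correctly under this reversal: the relations appearing in the proof of Theorem \ref{thm:A} are not manifestly symmetric under $i \mapsto n+1-i$, so some algebraic bookkeeping is required to show that the reversed relations cut out the same ring as the natural presentation of $H^{*}(\Hess(S, h'))$. If the substitution argument proves awkward --- for example, because certain relations do not transform on the nose, or because the $(y_{k+1}-y_1)$ factor requires a nontrivial change of basis in the $y$-coordinates --- I would instead run the spanning and linear-independence arguments of Theorem \ref{thm:A} directly in the transpose setting, with the final step being a dimension count against the known Poincar\'{e} polynomial of $\Hess(S, h')$.
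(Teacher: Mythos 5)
Your proposal is correct and matches the paper's approach: the paper establishes in Section \ref{sec:transposebasis} that the defining relations for $h'=((n-1)^{n-m},n^m)$ are exactly those for $(m,n,\ldots,n)$ under the swap $x_i\leftrightarrow x_{n+1-i}$, and then deduces Theorem \ref{thm:E} ``by a similar argument'' to Theorem \ref{thm:A} via that isomorphism, which is precisely your transport-of-structure strategy. Your extra step of also reversing the $y$-indices (turning $y_{k+1}-y_1$ into $y_{n-k}-y_n$ and then changing basis back to the span of $\{y_k-y_1\}$) is harmless and correctly flagged; the paper avoids it by fixing the $y$-indexing and only reversing the $x$'s.
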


\subsection{Outline}

In Section \ref{sec:background}, we give the necessary background on chromatic symmetric functions, Hessenberg varieties, and the representation theory of $\Sn$ in order to understand our results. We also define the Specht polynomials, coming from irreducible $\Sn$-modules, which motivate our first result.

In Section \ref{sec:transposebasis}, we look at the process Abe, Horiguchi, and Masuda used for finding a basis for $H^*(\Hess(S,h))$ when $h=(h(1),n,\ldots,n)$. We define a new class of GKM graphs, and show that this can be extended to a basis when $h' = ((n-1)^{n-m},n^m)$. We call this the transpose Hessenberg function, since the corresponding Dyck path is a reflection of the original Dyck path across a diagonal.

In Section \ref{sec:higherspecht}, we define an alternative basis for $H^{*}(\Hess(S,h))$ when $h=(h(1),n,\ldots,n)$, and show that this is a higher Specht basis, which directly illustrates the decomposition of $H^*(\Hess(S,h))$ into irreducible $\Sn$ representations. 

In Section \ref{sec:permutationbasis}, we show that the monomials from Theorem \ref{thm:A} can be reorganized into a permutation basis, which gives a geometric proof of the known result that the corresponding incomparability graphs have $e$-positive chromatic symmetric function. 

In Section \ref{sec:bijections}, we display a bijection between the fixed monomials in the higher Specht basis, which correspond to trivial representations of $\Sn$, and the set of $P_h$-tableaux of shape $(1^n)$, which correspond to the Schur function $s_{(n)}$ in Gasharov's expansion of $\omega X_{G}(\xvars)$. We also display a bijection between the second set of basis elements, which correspond to standard representations of $\Sn$, and the set of $P_h$-tableaux of shape $(2,1^{n-2})$, which correspond to the Schur function $s_{(n-1,1)}$.

Finally, in Section \ref{sec:poincarepolynomial}, we use the Shareshian-Wachs generalization of Gasharov's $P$-tableaux formula to give an alternative method for finding the Poincar\'{e} polynomial of $H^{*}(\Hess(S,h))$ for these Hessenberg functions.

\subsection{Acknowledgements}

I express my gratitude to Maria Gillespie, John Shareshian, and the Colorado State University Geometry Seminar for helpful discussions related to this work.

This work appeared as an extended abstract for the 36th International Convention on Formal Power Series and Algebraic Combinatorics \cite{Salois}.

\section{Background} \label{sec:background}

\subsection{Hessenberg Varieties}

In this paper, we use the notation for Hessenberg varieties which can be found in \cite{AHM}. Define $[n]$ to be the set $\{1,2,\ldots, n\}$. A function $h:[n]\to [n]$ is called a \textbf{Hessenberg function} of length $n$ if \begin{itemize}
    \item For all $i\in [n]$, $i\leq h(i)$, and
    \item For all $i\in [n-1]$, $h(i)\leq h(i+1)$.
\end{itemize}

We often write a Hessenberg function as a tuple $h = (h(1),h(2),\ldots, h(n))$. One can associate to each Hessenberg function a lattice path in the first quadrant from $(0,0)$ to $(n,n)$, called a Dyck path, by associating $h(i)$ with the height of the $i$-th horizontal step. Given a Hessenberg function $h$ and an $n\times n$ matrix $X$, we define the \textbf{Hessenberg variety} as follows:
\begin{equation*}
    \Hess(X,h) := \{V_{\bullet}\in \mathrm{Fl}(\C^n)\,|\,X(V_i) \subseteq V_{h(i)} \,\,\mathrm{for all}\,\,1\leq i\leq n\}
\end{equation*} where $\mathrm{Fl}(\C^n)$ is the flag variety consisting of sequences $V_{\bullet} = (V_0\subset V_1\subset \cdots \subset V_{n-1}\subset V_{n})$ of linear subspaces of $\C^n$ with $\dim(V_i) = i$ for all $i$. 

An alternate, yet equivalent definition is as follows. Let $\mathrm{GL}_n(\C)$ be the general linear group, and let $B$ be the Borel subgroup of invertible upper-triangular matrices. For a Hessenberg function $h$ of length $n$, let $H(h)$ be the set of matrices $M$ such that $M_{i,j} = 0$ whenever $i>h(j)$. For an $n\times n$ matrix $X$, the Hessenberg variety can be expressed as the set of cosets: $$\Hess(X,h) = \{gB\in \mathrm{Gl}_n(\C) /B \,\,|\,\, g^{-1}Xg \in H(h)\}.$$

Notice that, in the coset model for the Hessenberg variety, there is a natural invertible map from $\Hess(X,h)\to \Hess(m^{-1} X m, h)$ for any $m\in \mathrm{Gl}_n(\C)$, given by $gB \mapsto mgB$. Hence $\Hess(X,h) \cong \Hess(m^{-1}Xm, h)$. 

In \cite{Ty05}, Tymoczko proved that Hessenberg varieties behave nicely as varieties:

\begin{proposition}[\cite{Ty05}]
    Every Hessenberg variety $\Hess(X,h)$ admits an affine paving, that is, can be expressed as the disjoint union of a countable number of affine algebraic subvarieties. In particular, the cohomology ring $H^{*}(\Hess(X,h))$ is torsion-free and vanishes in the odd degree. 
\end{proposition}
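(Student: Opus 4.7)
The plan is to construct an explicit affine paving of $\Hess(X,h)$ by intersecting it with the Schubert cells of the flag variety, and then deduce the cohomological consequences from general principles. Recall that $\mathrm{Fl}(\C^n)$ decomposes as a disjoint union of Schubert cells $C_w = BwB/B$ for $w \in \Sn$, each isomorphic to an affine space of dimension equal to the length $\ell(w)$. The goal is to show that, for every $w$, the intersection $C_w \cap \Hess(X,h)$ is itself isomorphic to an affine space (possibly empty), so that these intersections form the desired paving.

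First, using the conjugation isomorphism $\Hess(X,h) \cong \Hess(m^{-1}Xm, h)$, I would reduce to the case where $X$ is in Jordan canonical form (and in particular diagonal in the regular semisimple case). In the regular semisimple case, one can apply Bia{\l}ynicki-Birula to a generic one-parameter subgroup of the maximal torus acting on $\Hess(S,h)$: the fixed points are the torus-fixed flags indexed by permutations, and the attracting cells give the paving. For a general $X$, write $X = S + N$ as commuting semisimple plus nilpotent parts and use a one-parameter subgroup adapted to $S$ whose attracting cells refine the Schubert cells, so that the restriction to $\Hess(X,h)$ still produces affine cells. Concretely, in local coordinates on $C_w$, a flag is represented by $g = uw$ where $u$ lies in a unipotent group parametrized by free variables indexed by inversions of $w$; the Hessenberg condition $g^{-1}Xg \in H(h)$ becomes a system of polynomial equations in these coordinates, and the key observation is that, with a careful ordering (provided by the one-parameter subgroup), these equations are linear in suitable pivot coordinates and cut out an affine subspace.

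Finally, once the affine paving is established, the cohomology statement follows from standard topology: a complex variety admitting an affine paving has integral cohomology concentrated in even degrees, torsion-free, with a basis indexed by the cells (each contributing in degree twice its complex dimension). This is because the long exact sequences associated with the filtration by closed unions of cells collapse — affine spaces have cohomology only in degree zero, so each attaching map only contributes a free generator in even degree. The main obstacle is the middle step: verifying affineness of each $C_w \cap \Hess(X,h)$ for general $X$, since the Hessenberg equations can interact nontrivially with the Schubert cell parameters. Tymoczko resolves this via a uniform Bia{\l}ynicki-Birula argument that avoids explicit case analysis by exploiting a $\C^*$-action whose fixed locus on $\Hess(X,h)$ is finite and whose attracting sets are automatically affine.
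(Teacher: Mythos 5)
The paper does not prove this proposition; it is quoted from Tymoczko \cite{Ty05} as background, so there is no in-paper argument to compare against. Judged on its own terms, your sketch has the right overall architecture (pave by intersecting with Schubert cells or Bia{\l}ynicki-Birula attracting sets, then deduce that an affinely paved variety has free, even-degree cohomology), and the final cohomological step is standard and correctly described. But the heart of the theorem is exactly the step you defer, and your description of how it is resolved is not accurate.

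Two concrete problems. First, for a general $X$ the torus acting on $\Hess(X,h)$ comes only from the semisimple part $S$ of $X$, and its fixed locus is \emph{not} finite: it is a union of products of smaller Hessenberg varieties attached to the nilpotent parts of $X$ on the eigenspaces of $S$. Only in the regular semisimple case are the fixed points isolated. Second, Bia{\l}ynicki-Birula attracting sets are guaranteed to be affine cells only under smoothness (or similar) hypotheses; nilpotent Hessenberg varieties are typically singular, so the claim that the attracting sets are ``automatically affine'' fails precisely in the case that carries all the difficulty. Tymoczko's actual argument does the opposite of what your last sentence asserts: the nilpotent case is handled by an explicit, coordinate-level analysis of the intersections of $\Hess(N,h)$ with Schubert cells (for a carefully chosen Borel and ordering), exhibiting each nonempty intersection as an iterated tower of affine fibrations; the general case is then assembled by combining this with the torus decomposition coming from $S$. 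As written, your proposal asserts the linearity/pivot claim without proof and misattributes the resolution to a mechanism that does not apply, so the central step remains a genuine gap.
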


As a result, we can write the Poincar\'{e} polynomial of $\Hess(X,h)$ in the following way: $$ \mathrm{Poin}(\Hess(X,h)),q) = \sum_{i=0}^{d} \mathrm{dim}(H^{2i}(\Hess(X,h))\,q^i $$ where $d$ is the dimension of $\Hess(X,h)$, and $\mathrm{deg}(q)=2$. This expression will be important for connecting the combinatorics of chromatic symmetric functions to the geometry of Hessenberg varieties. 

Let $h$ be a Hessenberg function of length $n$, and let $h'$ be the Hessenberg function obtained by reflecting the corresponding Dyck path for $h$ across the anti-diagonal. In other words, $h'(i) = |h^{-1}(\{n,n-1,\ldots,n+1-i\})|$. We call this the \textbf{transpose} Hessenberg function. The following observation is thanks to John Shareshian \cite{Shareshian}:

\begin{proposition} \label{prop:Transpose}
The space $\Hess(X,h)$ is isomorphic to $\Hess(X^T,h')$. 
\end{proposition}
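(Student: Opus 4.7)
My plan is to construct an explicit isomorphism $\phi: \Hess(X,h) \to \Hess(X^T, h')$ using orthogonal complements of flags. Fixing the standard non-degenerate symmetric bilinear form $\langle v,w\rangle = v^Tw$ on $\C^n$, I would define $\phi(V_\bullet) = W_\bullet$ where $W_j := V_{n-j}^\perp$. Since taking perpendiculars is an algebraic operation and $\dim W_j = j$, this gives a morphism $\mathrm{Fl}(\C^n) \to \mathrm{Fl}(\C^n)$. The same construction applied to $(X^T, h')$ supplies the candidate inverse, using that $(V^\perp)^\perp = V$ for a non-degenerate form and that $(h')' = h$ because the anti-diagonal reflection of a Dyck path is an involution.

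The heart of the proof is checking that $V_\bullet$ satisfies the $(X,h)$-Hessenberg condition if and only if $W_\bullet$ satisfies the $(X^T, h')$-Hessenberg condition. Using adjointness $\langle X^T v, u\rangle = \langle v, Xu\rangle$, I would translate $X^T W_j \subseteq W_{h'(j)} = V_{n-h'(j)}^\perp$ into the equivalent condition $X V_{n-h'(j)} \subseteq V_{n-j}$. Assuming $V_\bullet \in \Hess(X,h)$, we have $X V_{n-h'(j)} \subseteq V_{h(n-h'(j))}$, so it suffices to show $h(n - h'(j)) \leq n - j$. Unpacking the definition $h'(j) = |\{i : h(i) \geq n+1-j\}|$ and using that $h$ is weakly increasing, these indices are precisely $n-h'(j)+1, \ldots, n$, which forces $h(n-h'(j)) \leq n-j$ as required. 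The converse implication follows by symmetry, applying the same argument to the pair $(X^T, h')$.

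I expect the main obstacle to be the combinatorial bookkeeping that ties the anti-diagonal Dyck-path reflection to the key inequality $h(n-h'(j)) \leq n-j$; once the definition of $h'$ is unwrapped, however, this reduces to a one-line consequence of monotonicity of $h$. As a consistency check, when $h(i) = n$ for all $i$ (so $\Hess(X,h) = \mathrm{Fl}(\C^n)$), one computes $h'(i) = n$ as well, and $\phi$ becomes an algebraic automorphism of the flag variety, correctly recovering the trivial statement $\mathrm{Fl}(\C^n) \cong \mathrm{Fl}(\C^n)$.
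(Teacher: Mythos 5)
Your proof is correct, and the route is genuinely different from the paper's even though the underlying isomorphism is the same map in disguise: your flag-level assignment $V_\bullet \mapsto W_\bullet$ with $W_j = V_{n-j}^\perp$ corresponds, in the coset model, exactly to $gB \mapsto ((gW)^{-1})^T B$ (the columns of $(g^{-1})^T$ are dual to those of $g$ under the standard form, so $V_{n-j}^\perp$ is spanned by the last $j$ of them, and right-multiplying by the order-reversing permutation matrix puts them first). The paper verifies the Hessenberg condition by a matrix computation: it shows the conjugate of $X^T$ by the new coset representative equals $((g^{-1}Xg)^T)^R$ and then tracks how the zero pattern defining $H(h)$ transforms under transpose followed by $180^\circ$ rotation. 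You instead stay in the flag model, use adjointness of $X$ and $X^T$ to convert $X^T W_j \subseteq W_{h'(j)}$ into $XV_{n-h'(j)} \subseteq V_{n-j}$, and reduce everything to the single inequality $h(n-h'(j)) \leq n-j$, which follows from monotonicity of $h$ and the identification of $\{i : h(i) \geq n+1-j\}$ with $\{n-h'(j)+1,\dots,n\}$. Your version makes the role of the anti-diagonal reflection combinatorially transparent and avoids the rotation bookkeeping, at the cost of having to argue separately that the perp construction is algebraic and involutive (which you do, correctly, via $(V^\perp)^\perp = V$ and $(h')'=h$); the paper's version gets algebraicity for free from the explicit polynomial formula for the coset representative. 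Both are complete proofs.
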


\begin{proof}
    Let $W$ be the permutation matrix for the permutation $\pi(i)=n+1-i$ for all $i\in [n]$, and so $W = W^{-1} = W^T$. Notice that for any $n\times n$ matrix $X$, we have that $WXW$ is the matrix whose entries are the same as $X$, but rotated $180^{\circ}$, which we call $X^{R}$. Let $g\in \mathrm{GL}_n(\C)$. We will show that if $gB\in \Hess(X,h)$, then $((gW)^{-1})^T B\in \Hess(X^T,h')$: \begin{align*}
        (gW)^T X^T ((gW)^{-1})^T &= W g^T X^T (g^{-1})^{T} W\\
        &= W (g^{-1} X g)^T W\\
        &= ((g^{-1} X g)^T)^R
    \end{align*}

    Since $gB \in \Hess(X,h)$, we know $g^{-1}Xg \in H(h)$, so $(g^{-1}Xg)_{i,j} = 0$ whenever $i>h(j)$. Taking the transpose, we get that $(g^{-1} X g)^T_{i,j} = 0$ whenever $j>h(i)$. Finally, rotating by $180^{\circ}$, we have $((g^{-1} X g)^T)^R_{i,j} = 0$ when $i>h'(j)$. Hence $((gW)^{-1})^T B\in \Hess(X^T,h')$. Further note that $(WBW)^T=B$.  

    Since the map $g\mapsto ((gW)^{-1})^T$ is an involution, and each entry in the resulting matrix is a polynomial in the entries of $g$, this is an invertible algebraic morphism $\Hess(X,h) \to \Hess(X^T,h')$, and so the varieties are isomorphic.
\end{proof}

\begin{corollary}
    When $X$ is a semisimple matrix, $\Hess(X,h)$ is isomorphic to $\Hess(X,h')$.
\end{corollary}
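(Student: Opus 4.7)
The plan is to combine Proposition \ref{prop:Transpose} with the conjugation invariance of Hessenberg varieties noted earlier (namely, that $\Hess(X,h)\cong \Hess(m^{-1}Xm,h)$ for any $m\in \mathrm{GL}_n(\C)$). By Proposition \ref{prop:Transpose}, we already have $\Hess(X,h)\cong \Hess(X^T,h')$, so it suffices to show that $\Hess(X^T,h')\cong \Hess(X,h')$ whenever $X$ is semisimple.

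The key observation is that any semisimple matrix is conjugate to its transpose. Indeed, if $X$ is semisimple, then $X=PDP^{-1}$ for some diagonal $D$ and invertible $P$. Taking transposes, $X^T = (P^T)^{-1} D P^T$, so $X^T$ is also diagonalizable with the same eigenvalues as $X$. Since $X$ and $X^T$ have identical Jordan (in fact, diagonal) forms, there exists $m\in \mathrm{GL}_n(\C)$ such that $X^T = m^{-1}Xm$. I would explicitly produce such an $m$ by composing $(P^T)^{-1}$ with $P^{-1}$, giving $m = P(P^T)^{-1}$.

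With this conjugator in hand, the conjugation invariance remark from the excerpt gives $\Hess(X^T,h') = \Hess(m^{-1}Xm,h')\cong \Hess(X,h')$. Chaining this with Proposition \ref{prop:Transpose} yields
\[
\Hess(X,h)\cong \Hess(X^T,h')\cong \Hess(X,h'),
\]
which is the desired conclusion.

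There is no real obstacle: the entire content of the corollary lies in the fact that semisimple matrices are similar to their transposes, after which the proposition and the basic conjugation-invariance observation do all the work. The only thing to take mild care with is verifying that the intermediate matrix $m$ is genuinely invertible (which is immediate since $P$ and $P^T$ are) so that the isomorphism $gB\mapsto mgB$ is well-defined on cosets.
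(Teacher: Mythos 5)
Your argument is correct and rests on exactly the same ingredients as the paper's proof (Proposition \ref{prop:Transpose}, conjugation invariance, and diagonalizability), just in a different order: the paper first conjugates $X$ to a diagonal matrix $D$ and applies the proposition there, where $D^T=D$ makes the transpose step vacuous, whereas you apply the proposition to $X$ directly and then conjugate $X^T$ back to $X$. One small slip in your explicit conjugator: with $X=PDP^{-1}$ the matrix realizing $X^T=m^{-1}Xm$ is $m=PP^T$, not $P(P^T)^{-1}$, since $(PP^T)^{-1}X(PP^T)=(P^T)^{-1}P^{-1}\,PDP^{-1}\,PP^T=(P^T)^{-1}DP^T=X^T$; this does not damage the proof because you also justify the existence of $m$ abstractly from the common diagonal form.
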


\begin{proof} Since $X$ is diagonalizable, there exists some $m\in \mathrm{GL}_n(\C)$ such that $m^{-1}X m$ is diagonal. Since Hessenberg varieties are preserved under conjugation of the underlying matrix by an invertible matrix, we get the following: 
    $$\Hess(X,h) \cong \Hess(m^{-1} Xm, h) \cong \Hess((m^{-1} Xm)^T, h') = \Hess(m^{-1}Xm, h') \cong \Hess(X,h')$$ as desired.
\end{proof}

In this paper, we focus on Hessenberg varieties $\Hess(S,h)$ where $S$ is a regular semisimple matrix, meaning that $S$ is diagonalizable and has distinct eigenvalues. In particular, these Hessenberg varieties are smooth, and for a fixed Hessenberg function $h$, $\Hess(S,h)\cong \Hess(S',h)$ for any regular semisimple $S$ and $S'$ \cite{DePrSh}.  

\subsection{Chromatic Symmetric Functions}

In \cite{StanCSF}, Stanley defined the \textbf{chromatic symmetric function} of a finite graph $G$. This definition was later generalized by Shareshian and Wachs in \cite{SW} to form a quasisymmetric function: If $G$ has an (ordered) vertex set $V=\{v_1,\ldots, v_d\}$, then the \textbf{chromatic quasisymmetric function} is 
$$X_G(\xvars ; q) = \sum_{\kappa:V\to\N} x_{\kappa(v_1)}x_{\kappa(v_2)}\cdots x_{\kappa(v_d)}\,q^{\asc(\kappa)} $$
where the sum is over all proper colorings $\kappa:V\to \N$ of the vertices of $G$, and where $\asc(\kappa)$ is the number of vertices $v_i<v_j$ such that $\kappa(v_i)<\kappa(v_j)$.  Each term in the sum has the form $x_1^{i_1}\cdots x_n^{i_n}\,q^{\asc(\kappa)}$, where $i_k$ is the number of times $k$ was used in the coloring $\kappa$, and $n$ is the largest color used. 

One of the main conjectures of study for chromatic symmetric functions comes from Stanley and Stembridge in \cite{StSt}, but also naturally extends to the quasisymmetric case. Let $h$ be a Hessenberg function, and define the poset $P_h$ on the set $[n]$ as follows: We say $i<_{P_h} j$ if and only if $h(i)<j$. These posets are $(2+2)$- and $(3+1)$- free, so their incompatibility graphs relate to the original Stanley-Stembridge conjecture \cite{GP13}. The following generalization of the conjecture is equivalent to the one given in \cite{SW} by Shareshian and Wachs.

\begin{conjecture}[\cite{SW}, Conjecture 1.3]\label{conj:StanleyStembridge}
    If $G$ is the incomparability graph of the poset $P_h$ for some Hessenberg function $h$, then $X_G(\xvars; q)$ is $e$-positive and $e$-unimodal. 
\end{conjecture}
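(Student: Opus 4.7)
The plan is to leverage the Brosnan--Chow and Guay-Paquet theorem identifying the graded Frobenius character of $H^*(\Hess(S,h))$ with $\omega X_G(\xvars; q)$, where $G$ is the incomparability graph of $P_h$. Under this identification, $e$-positivity of $X_G(\xvars; q)$ is equivalent to $H^*(\Hess(S,h))$ being, as a graded $\Sn$-module, a direct sum of permutation modules of the form $\C[\Sn/\Sn^{\lambda}]$, since $\omega h_\lambda = e_\lambda$. Unimodality would additionally require that the multiplicity of each such summand in graded degree $2i$ be a unimodal sequence in $i$. The strategy, then, is purely algebraic once we reduce to this reformulation: exhibit, for each Hessenberg function $h$, a graded permutation basis of $H^*(\Hess(S,h))$, and organize it according to Young-subgroup orbits whose sizes match the $e$-expansion predicted by the Gasharov--Shareshian--Wachs tableau formula.

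Following the approach of Section \ref{sec:permutationbasis}, my first step would be to generalize the AHM presentation of $H^*(\Hess(S,h))$ as a quotient of a polynomial ring in two variable sets from the staircase function $h=(h(1),n,\ldots,n)$ to arbitrary $h$. Once such a presentation is in hand, I would attempt to adapt the higher Specht basis of Theorem \ref{thm:A} (and its transpose analogue in Theorem \ref{thm:E}) by grouping its monomials into Young-subgroup orbits, using the bijections of Theorems \ref{thm:B} and \ref{thm:C} as templates. The inductive input would be the Schur expansion of $X_G(\xvars; q)$ in terms of $P_h$-tableaux: for each shape $\lambda$ arising from a $P_h$-tableau count, produce an orbit of cardinality $n!/\prod_i m_i(\lambda)!$ whose generators transform as the Young symmetrizer associated with $\lambda$. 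Unimodality could then be approached by constructing an explicit multiplication map $H^{2i}(\Hess(S,h)) \to H^{2(i+1)}(\Hess(S,h))$ that is $\Sn$-equivariant and injective below the middle degree, in the spirit of a hard Lefschetz or Macaulay-type argument.

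The main obstacle, and the reason this conjecture has resisted proof for decades, is that no presentation of $H^*(\Hess(S,h))$ as a single polynomial quotient is known for arbitrary $h$: the AHM description and the GKM construction of Section \ref{sec:transposebasis} both rely crucially on the staircase shape of the Dyck path. Furthermore, even the weaker statement that $H^*(\Hess(S,h))$ is a permutation $\Sn$-module (equivalent to $e$-positivity and still open) does not obviously follow from any known combinatorial model of $P_h$-tableaux, since the Gasharov--Shareshian--Wachs expansion is Schur-positive rather than $h$-positive, and passing from Schur- to $h$-positivity is precisely the hard direction. A plausible reduction would be an induction on the number of ``bumps'' in the Dyck path of $h$, using the two families handled in this paper as base cases and attempting a geometric degeneration $\Hess(S,h) \rightsquigarrow \Hess(S,h')$ for a modification $h'$ with one fewer bump; however, controlling the $\Sn$-action through such a degeneration (which typically breaks smoothness and hence Tymoczko's dot action) appears to require genuinely new geometric input, and I would regard this as the essential difficulty preventing a complete proof along these lines.
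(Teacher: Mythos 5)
This statement is recorded in the paper as Conjecture \ref{conj:StanleyStembridge}, attributed to Shareshian and Wachs, and the paper offers no proof of it; it is an open problem. What the paper actually proves is (a known case of) the $e$-positivity half for the single family $h=(h(1),n,\ldots,n)$ and its transpose, by exhibiting an explicit permutation basis in Section \ref{sec:permutationbasis}. So your proposal is not being measured against a proof in the paper, and, as you yourself concede in your final paragraph, it is not a proof either: it is a research outline whose central steps are precisely the open problems.

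Concretely, the gaps are these. First, your reduction of $e$-positivity to exhibiting a permutation-module decomposition of $H^*(\Hess(S,h))$ is correct (via Proposition \ref{thm:BC129} and $\omega h_\lambda = e_\lambda$), but producing such a decomposition for arbitrary $h$ is exactly the Stanley--Stembridge problem in its geometric form; the Gasharov--Shareshian--Wachs expansion (Proposition \ref{prop:CSFexpansionPtab}) gives Schur-positivity only, and there is no known passage from a Schur-positive tableau formula to an $h$-positive one --- that is the hard direction, as you note. Second, the AHM-style polynomial presentation you propose to generalize genuinely does not exist for general $h$: the classes $y_k$ of Section \ref{sec:transposebasis} depend on the fact that for these special Hessenberg functions all chains in $P_h$ have length at most two, so the higher Specht and permutation bases of Theorems \ref{thm:A} and \ref{thm:E} have no evident extension, and your proposed induction on ``bumps'' of the Dyck path lacks the degeneration argument it would need (and, as you observe, such degenerations destroy smoothness and hence the dot action). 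Third, the $e$-unimodality half would require an $\Sn$-equivariant Lefschetz-type injection $H^{2i}\to H^{2(i+1)}$ compatible with a permutation-module filtration; hard Lefschetz gives unimodality of Betti numbers, not of the multiplicity of each $h_\lambda$ summand. In short, the proposal correctly maps the landscape and honestly flags its own fatal gaps, but it does not constitute a proof, and none should be expected: the statement remains a conjecture.
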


Shareshian and Wachs showed that, for the incomparability graphs $G = \inc(P_h)$, the chromatic quasisymmetric function is in fact symmetric, although this is not always true (for instance, any directed claw graph has non-symmetric chromatic quasisymmetric function). 

In \cite{Gasharov}, Gasharov proved a weaker condition -- that the chromatic symmetric function for this class of graphs is Schur-positive. This result was also extended by Shareshian and Wachs in \cite{SW} as follows.

Let $G$ be the incomparability graph of a poset $P$ on the set $[n]$, and let $\lambda$ be a partition of $n$. A \textbf{$P$-tableau} $T$ of shape $\lambda$ is a filling of the Young diagram of $\lambda$ with entries of $P$ such that the following conditions hold:
\begin{itemize}
    \item Each entry is used exactly once;
    \item If $j\in P$ appears directly to the right of $i\in P$, then $j >_P i$;
    \item If $j\in P$ appears directly above $i\in P$, then $j \not<_P i$.
\end{itemize}

We say a \textbf{$P$-inversion} in a $P$-tableau is a pair of entries $i,j$ such that $i<j$ as integers, $i$ appears in a higher row than $j$, and $i$ and $j$ are incomparable in $P$. Define $\Inv_P(T)$ to be the set of $P$-inversions of $T$, and $\inv_P(T):= |\Inv_P(T)|$ to be the number of $P$-inversions.

\begin{proposition}[\cite{SW}, Theorem 6.3]\label{prop:CSFexpansionPtab}
Let $G$ be the incomparability graph of a $(3+1)$-free poset $P$. Then $$ X_G(\xvars; q) = \sum_{\lambda\vdash n} \left( \sum_{T\in \mathrm{PT}(\lambda)} q^{\inv_P(T)} \right) s_{\lambda} $$ where $\mathrm{PT}(\lambda)$ is the set of $P$-tableaux of shape $\lambda$ and $s_{\lambda}$ is the Schur function for the partition $\lambda$.
\end{proposition}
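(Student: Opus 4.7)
The plan is to prove this by extending Gasharov's original sign-reversing involution from the unrefined ($q=1$) case to track the new $q$-statistic.

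First, I would reformulate the right-hand side via the Jacobi-Trudi identity $s_\lambda = \det(h_{\lambda_i - i + j})$. Expanding the determinant gives
$$s_\lambda = \sum_{\sigma \in \mathfrak{S}_\ell} \mathrm{sgn}(\sigma)\, h_{\lambda+\delta-\sigma(\delta)},$$
where $\delta = (\ell-1,\ldots,1,0)$ and $\ell = \ell(\lambda)$. This rewrites the right-hand side as a signed sum indexed by \emph{column-strict $P$-arrays}: fillings of shapes of the form $\lambda + \delta - \sigma(\delta)$ with distinct elements of $P$ such that each column is strictly $P$-increasing downward, weighted by $\xvars^{\mathrm{content}}\, q^{\mathrm{stat}}$ for an appropriate extension of $\inv_P$.

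Second, I would show that the left-hand side $X_G(\xvars;q)$ admits the same signed expansion. For each proper coloring $\kappa$ of $G = \inc(P)$, the color classes are independent sets, hence $P$-antichains, which can be arranged into the columns of a $P$-array whose content records the monomial of $\kappa$. A direct check verifies that $\asc(\kappa)$ matches the $q$-statistic on the resulting array, after summing over the ambiguity in how colors within a class are stacked.

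Third, I would define Gasharov's involution on column-strict $P$-arrays and extend it to track the $q$-exponent: scan from the left for the first pair of adjacent columns where the row condition of a $P$-tableau is violated, and swap suitable suffixes of these two columns at the violation. The $(3+1)$-free hypothesis on $P$ guarantees that the swap produces another column-strict $P$-array with opposite sign, and that the map is an involution whose fixed points are exactly the $P$-tableaux.

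The main obstacle is the last step: proving that the $q$-statistic is invariant under the involution, so that the canceling pairs contribute nothing and the remaining sum is $\sum_\lambda (\sum_{T \in \mathrm{PT}(\lambda)} q^{\inv_P(T)}) s_\lambda$. The suffix swap rearranges potentially many entries at once, so one must verify pairwise that each inversion between entries in the two swapped columns is matched by an inversion in the image filling, while inversions involving entries outside the swapped columns are unchanged. This bookkeeping is precisely where the $(3+1)$-free property is essential: it rules out the ``bad'' three-element configurations that would otherwise break the balance between created and destroyed inversions.
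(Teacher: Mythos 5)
The paper does not prove this proposition at all: it is quoted verbatim from Shareshian and Wachs (\cite{SW}, Theorem 6.3) and used as a black box, so there is no in-paper argument to compare against. Judged on its own terms, your proposal correctly identifies the strategy that Shareshian and Wachs actually use --- a $q$-refinement of Gasharov's sign-reversing involution --- but it has two problems. First, the combinatorial dictionary in your second step is backwards. An independent set of $\inc(P)$ is a set of pairwise \emph{comparable} elements, i.e.\ a chain of $P$, not an antichain; correspondingly, the color classes of a proper coloring populate the \emph{rows} of a $P$-array (this is what pairs with the $h$-version of Jacobi--Trudi via $\langle m_\mu, h_\mu\rangle = 1$), and Gasharov's arrays have $P$-increasing rows, with the involution locating the first violation of the column condition and swapping row suffixes. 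Your ``column-strict arrays with chains in the columns'' is the transpose of the correct setup and does not match the $P$-tableau conditions as defined in this paper (rows strictly $P$-increasing, columns only required to avoid $j <_P i$), so the fixed points of your involution would not be the $P$-tableaux appearing in the statement.

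Second, and more fundamentally, the one step that constitutes the actual new content of the $q$-refined theorem --- showing that the suffix-swap involution preserves the inversion statistic, so that cancelling pairs contribute equal powers of $q$ --- is explicitly labelled ``the main obstacle'' and left undone. Everything before that point is already in Gasharov at $q=1$; the entire difficulty of Theorem 6.3 of \cite{SW} lies in defining the statistic on non-tableau arrays compatibly with $\asc(\kappa)$ on colorings and in the case analysis showing the swap neither creates nor destroys inversions (this is where $(3+1)$-freeness enters). As written, the proposal is a plan for a proof rather than a proof, and the plan's combinatorial setup would need to be transposed before the plan could be executed.
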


\begin{example}\label{ex:simplePtableaux}
    Consider the poset $P$ on $\{1,2,3,4,5\}$ given by the single relation $1<_P 5$. The graph $G=\inc(P)$ is the complete graph on $\{1,2,3,4,5\}$ without the edge from $1$ to $5$. There are six $P$-tableaux of shape $(2,1,1,1)$, given below, with $3,4,4,5,5,$ and $6$ inversions, respectively. These $P$-tableaux contribute $(q^3+2q^4+2q^5+q^6)s_{(2,1,1,1)}$ to the Schur expansion of $X_G(\xvars;q)$. 

    \begin{center}
    \begin{ytableau}4 \\ 3 \\ 2 \\ 1 & 5 \end{ytableau} \hspace{1cm}
    \begin{ytableau}4 \\ 2 \\ 3 \\ 1 & 5 \end{ytableau} \hspace{1cm}
    \begin{ytableau}3 \\ 4 \\ 2 \\ 1 & 5 \end{ytableau} \hspace{1cm}
    \begin{ytableau}3 \\ 2 \\ 4 \\ 1 & 5 \end{ytableau} \hspace{1cm}
    \begin{ytableau}2 \\ 4 \\ 3 \\ 1 & 5 \end{ytableau} \hspace{1cm}
    \begin{ytableau}2 \\ 3 \\ 4 \\ 1 & 5 \end{ytableau}
    \end{center} \vspace{0.1cm}
\end{example}

\subsection{GKM Graphs}

Suppose $S$ is a regular semisimple matrix, so $S$ is diagonalizable with distinct eigenvalues. Then $\Hess(S,h)$ is smooth, and all Hessenberg varieties of this form are isomorphic to each other for a fixed $h$. In \cite{Ty07}, Tymoczko exhibited an action of $\Sn$ on $H^{*}(\Hess(S,h))$ using GKM theory, which realizes the equivariant cohomology of Hessenberg varieties as follows:

\begin{proposition}[\cite{Ty07}, Proposition 4.7]\label{prop:eqcohoring}
    The equivariant cohomology ring $H^{*}_T(\Hess(S,h))$ is isomorphic (as rings) to 
    \begin{align*}
        \left\{ \alpha\in \bigoplus_{w\in \Sn} \C[t_1,\ldots, t_n] \, {\bigg|} t_{w(i)}-t_{w(j)}\textrm{ divides } \alpha(w)-\alpha(w') \textrm{ if } w'= w(ji) \textrm{ for some } j<i \leq h(j) \right\} \label{eq:rsscohom}
    \end{align*}
    where $\alpha(w)$ is the $w$-component of $\alpha$ and $(ji)\in \Sn$ is the transposition of $j$ and $i$. We call the division criterion for the polynomials in two parts of a tuple the \textbf{GKM condition}.
\end{proposition}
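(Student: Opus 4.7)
The plan is to apply the classical GKM (Goresky--Kottwitz--MacPherson) theorem to the regular semisimple Hessenberg variety, since $\Hess(S,h)$ is known to be smooth and projective in this case. The strategy has four steps: verify the GKM hypotheses, identify the $T$-fixed points, identify the one-dimensional $T$-orbits together with their weights, and then invoke the general GKM description of equivariant cohomology.

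First, without loss of generality assume $S = \mathrm{diag}(s_1, \ldots, s_n)$ with distinct eigenvalues, so that the maximal torus $T \subset \mathrm{GL}_n(\C)$ of diagonal matrices commutes with $S$ and hence acts on $\Hess(S,h)$. I would verify the GKM hypotheses in turn: smoothness is standard in the regular semisimple case; equivariant formality follows from the affine paving given by Tymoczko and the fact that cohomology is concentrated in even degree; and the finiteness of $T$-fixed points and of one-dimensional $T$-orbits will follow from the next step. Once these hold, the GKM theorem identifies $H^*_T(\Hess(S,h))$ with the subring of $\bigoplus_{p \in \Hess(S,h)^T} H^*_T(\mathrm{pt})$ cut out by divisibility conditions, one per one-dimensional orbit, where the divisor is the $T$-weight of the orbit.

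Second, I would determine the $T$-fixed points. Since every $T$-fixed point of $\mathrm{Fl}(\C^n)$ is a coordinate flag $w B$ for some $w \in \Sn$, and since $S$ is diagonal, the condition $S V_j \subseteq V_{h(j)}$ is automatically satisfied for any coordinate flag. Hence $\Hess(S,h)^T \cong \Sn$, so $\bigoplus_{p} H^*_T(\mathrm{pt}) \cong \bigoplus_{w \in \Sn} \C[t_1,\ldots,t_n]$, matching the underlying module in the statement.

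Third, and this will be the main technical step, I would identify the one-dimensional $T$-orbits and compute their weights. Working in the coset model $\Hess(S,h) = \{gB : g^{-1}Sg \in H(h)\}$, I would show that near a fixed point $wB$ the Hessenberg variety is locally isomorphic to the affine space spanned by the root subgroups $U_{\alpha}$ with $\alpha = w(e_i - e_j)$ for $j < i \leq h(j)$. A one-parameter curve through $wB$ using $U_{w(e_i - e_j)}$ connects $wB$ to the fixed point $w(ji)B = w'B$, and the $T$-weight on that curve is $t_{w(i)} - t_{w(j)}$. Conversely, every one-dimensional $T$-orbit arises this way, since the tangent directions at $wB$ inside $\Hess(S,h)$ are exactly those corresponding to Hessenberg-admissible transpositions. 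This step is the main obstacle because it requires a careful local analysis: one must check both that these curves genuinely lie in $\Hess(S,h)$ (using the Hessenberg condition $i \leq h(j)$) and that no additional one-dimensional orbits exist.

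Fourth, combining these computations with the GKM theorem gives exactly the statement: a tuple $\alpha = (\alpha(w))_{w \in \Sn}$ lies in $H^*_T(\Hess(S,h))$ if and only if for every Hessenberg-admissible pair $(w, w' = w(ji))$, the weight $t_{w(i)} - t_{w(j)}$ divides $\alpha(w) - \alpha(w')$. The ring structure is inherited componentwise since restriction to fixed points is a ring homomorphism, completing the identification.
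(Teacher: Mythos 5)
This proposition is quoted in the paper as a known result of Tymoczko (\cite{Ty07}, Proposition 4.7) and is not proved there, so there is no in-paper argument to compare against; your outline is precisely the standard GKM-theoretic route that the cited source follows (equivariant formality from the affine paving, fixed points $=$ coordinate flags $=\Sn$, one-dimensional orbits $=$ Hessenberg-admissible transposition curves with weights $t_{w(i)}-t_{w(j)}$). The plan is sound and identifies the genuinely technical step correctly, namely the local verification that the one-dimensional $T$-orbits in $\Hess(S,h)$ are exactly the curves $C_{w,\alpha}$ of the flag variety with $j<i\leq h(j)$ and no others; that step is only sketched here, but it is the right obstacle to flag and is carried out in \cite{Ty07}.
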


Notice that, for each $i = 1,\ldots, n$, the tuple ${t}_i:= (t_i)_{w\in \Sn}$ is an element of the above set. From the theory of torus-equivariant cohomology (see \cite{AHM} for details), we get a presentation of the cohomology ring of $\Hess(S,h)$:
$$ H^{*}(\Hess(S,h);\C) \cong H^{*}_T(\Hess(S,h);\C)/({t}_1,\ldots,{t}_n) $$

The $\Sn$ action described by Tymoczko acts as follows: For $v\in \Sn$, and $\alpha = (\alpha(w))_{w\in \Sn} \in \bigoplus_{w\in \Sn} \C[t_1,\ldots, t_n]$ define $v\cdot \alpha$ by $(v\cdot \alpha)(w) = v\cdot \alpha(v^{-1}w)$ for all $w\in \Sn$, where the action on the right permutes the order of the tuple by the permutation $v$. 

Rather than thinking of elements of this ring as tuples of polynomials, it is often useful to think of them as labeled graphs with vertex set $\Sn$, and edges whenever $w' = w(ij)$ with $j<i\leq h(j)$, where the vertex label at $w$ is $\alpha(w)$.

\begin{figure}[ht] 
\begin{subfigure}{0.45\textwidth} \centering
\begin{tikzpicture}
    \tikzstyle{vertex} = [circle,draw=black, inner sep=2pt,fill=black];
    \node[vertex, label=below:{123}] (a) at (0,0) {};
    \node[vertex, label=right:{132}] (b) at (1,0.5) {};
    \node[vertex, label=right:{312}] (c) at (1,1.5) {};
    \node[vertex, label=left:{213}] (f) at (-1,0.5) {};
    \node[vertex, label=left:{231}] (e) at (-1,1.5) {};
    \node[vertex, label=above:{321}] (d) at (0,2) {};
    \draw[dashed] (b)--(c); \draw[dashed] (e)--(f);
    \draw[double distance = 2pt] (a)--(b); \draw[double distance = 2pt] (e)--(d);
    \draw (c)--(d); \draw (f)--(a);
\end{tikzpicture} \caption{$h=(2,3,3)$}
\end{subfigure}
\begin{subfigure}{0.45\textwidth} \centering
\begin{tikzpicture}
    \tikzstyle{vertex} = [circle,draw=black, inner sep=2pt,fill=black];
    \node[vertex, label=below:{123}] (a) at (0,0) {};
    \node[vertex, label=right:{132}] (b) at (1,0.5) {};
    \node[vertex, label=right:{312}] (c) at (1,1.5) {};
    \node[vertex, label=left:{213}] (f) at (-1,0.5) {};
    \node[vertex, label=left:{231}] (e) at (-1,1.5) {};
    \node[vertex, label=above:{321}] (d) at (0,2) {};
    \draw[dashed] (b)--(c); 
    \draw[dashed] (e)--(f); 
    \draw[dashed] (a)--(d);
    \draw[double distance = 2pt] (a)--(b); 
    \draw[double distance = 2pt] (e)--(d); 
    \draw[double distance = 2pt] (c)--(f);
    \draw (c)--(d); \draw (f)--(a); \draw (b)--(e);
\end{tikzpicture} \caption{$h=(3,3,3)$}
\end{subfigure}
\caption{The (unlabeled) GKM graphs for two Hessenberg functions. Dashed lines represent the permutation $(13)$, solid lines represent $(12)$, and doubled lines represent $(23)$.} \label{fig:GMKexample}
\end{figure}
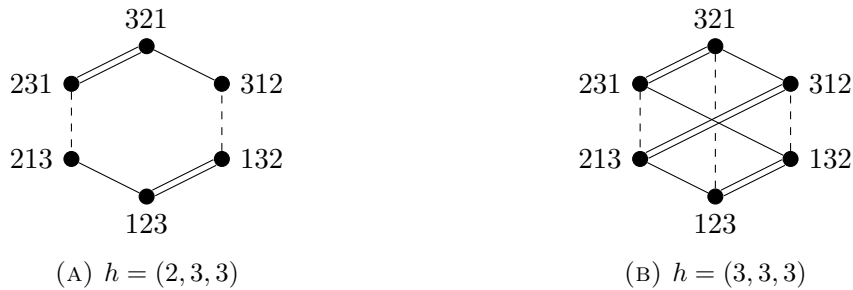

The following theorem ties together the geometry of Hessenberg varieties with the algebraic structure of chromatic symmetric functions. It was originally conjectured in \cite{SW} by Shareshian and Wachs, and proved both by Brosnan and Chow in \cite{BC18}, and by Guay-Paquet in \cite{GP16}.

\begin{proposition}[\cite{BC18}, \cite{GP16}] \label{thm:BC129}
Let $h$ be a Hessenberg vector, $P_h$ be the associated poset with incomparability graph $G=(V,E)$. Let $\Hess(S,h)$ be the regular semisimple Hessenberg variety for $h$. Then 
$$ \omega X_G(\mathbf{x};q) = \sum_{j=0}^{|E|} \mathrm{Frob}(H^{2j}(\Hess(S,h)))q^j, $$
where $\mathrm{Frob}(H^{2j}(\Hess(S,h)))$ is the Frobenius characteristic of the representation of $\Sn$ on the $2j^{th}$ cohomology group of $\Hess(S,h)$ and $\omega$ is the involution on symmetric functions that sends the Schur function $s_{\lambda}$ to $s_{\lambda'}$, where $\lambda'$ is the conjugate partition to $\lambda$. 
\end{proposition}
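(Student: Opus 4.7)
The plan is to prove the identity by comparing power-sum expansions of both sides as elements of $\Lambda_\Q[q]$. Both sides are symmetric: the right-hand side because Frobenius characteristics of $\Sn$-representations are symmetric, and the left-hand side by Shareshian--Wachs, who established that $X_G(\xvars;q)$ is symmetric whenever $G=\inc(P_h)$ for a Hessenberg function $h$. Expanding in power sums, the identity becomes the assertion that for every $w\in \Sn$ with cycle type $\mu$,
\[
\sum_{j\ge 0}\mathrm{tr}\!\left(w\,\big|\,H^{2j}(\Hess(S,h))\right)q^j
\]
equals the coefficient of $p_\mu/z_\mu$ in $\omega X_G(\xvars;q)$. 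Thus the whole problem reduces to a character computation on one side and a coloring count on the other.

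The first main step is to realize the Tymoczko $\Sn$-action geometrically as the monodromy of the family $\pi\colon \mathcal{H}ess(h)\to \mathcal{U}$, where $\mathcal{U}\subset \mathfrak{gl}_n$ is the locus of regular semisimple matrices, whose fiber over $X$ is $\Hess(X,h)$. Since $\pi$ is a smooth proper fiber bundle over $\mathcal{U}$ and $\pi_1(\mathcal{U})$ surjects onto $\Sn$ via permutation of eigenvalues, we obtain an $\Sn$-representation on $H^*(\Hess(S,h))$ by parallel transport. The key lemma is that this monodromy action matches Tymoczko's GKM action of Proposition \ref{prop:eqcohoring}: I would check this on torus-equivariant cohomology, where both actions permute the copies of $\C[t_1,\ldots,t_n]$ indexed by the $T$-fixed points (which are in bijection with $\Sn$) in the same way under simple reflections, and then descend to the non-equivariant quotient.

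The second main step is a Lefschetz fixed-point calculation for each $w\in\Sn$. The fixed locus of $w$ acting on $\Hess(S,h)$ decomposes as a product of smaller regular semisimple Hessenberg varieties, one for each cycle of $w$, built from the restriction of $P_h$ to the corresponding orbits. Summing their Poincar\'e polynomials and repackaging by cycle type recovers, via the Shareshian--Wachs formula relating chromatic quasisymmetric functions to proper colorings, the power-sum expansion of $X_G(\xvars;q)$; tracking the ascent statistic on colorings matches the cohomological $q$-grading. The involution $\omega$ absorbs the sign $(-1)^{n-\ell(\mu)}$ arising from the individual cycle contributions in the Lefschetz formula, which is exactly the sign by which $\omega$ acts on $p_\mu$.

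The hardest step by far is the first: identifying monodromy with Tymoczko's GKM action. Brosnan--Chow's route runs this identification through the local invariant cycle theorem applied to a degeneration from regular semisimple to regular nilpotent, where the nilpotent cohomology is simultaneously the $\Sn$-invariants of $H^*(\Hess(S,h))$ and a well-understood quotient of the coinvariant algebra, pinning down the action uniquely. An alternative, following Guay-Paquet, bypasses monodromy entirely: one shows that both $\omega X_G$ and the right-hand side satisfy a common ``modular'' three-term relation under local edge moves of the Dyck path of $h$, then checks the base case $h=(n,\ldots,n)$ (where $\Hess(S,h)$ is the full flag variety and $G=K_n$ is the complete graph) and concludes by uniqueness of the solution to the recursion.
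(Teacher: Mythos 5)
The first thing to note is that the paper does not prove this proposition: it is imported as a black box from Brosnan--Chow \cite{BC18} and Guay-Paquet \cite{GP16}, so there is no internal proof to compare against. Your proposal is therefore a reconstruction of the external argument. Its first and last paragraphs capture the genuine architecture of Brosnan--Chow reasonably well (monodromy of the family over the regular semisimple locus, the nontrivial identification of that monodromy with Tymoczko's dot action on equivariant cohomology, and the role of the local invariant cycle theorem applied to degenerations), and the Guay-Paquet alternative via the modular law is also described accurately in outline.

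The genuine gap is your second main step. A permutation $w$ does not act on the variety $\Hess(S,h)$: conjugating $S$ by a permutation matrix yields a \emph{different} regular semisimple matrix (the eigenvalues are distinct), so left multiplication by $w$ carries $\Hess(S,h)$ to $\Hess(wSw^{-1},h)$ rather than preserving it, and the dot action exists only on (equivariant) cohomology --- via the GKM description or, equivalently, as a $\pi_1$-monodromy action --- not as an automorphism of the space. Hence there is no ``fixed locus of $w$'' to feed into a Lefschetz fixed-point formula, and the claimed decomposition of that locus into a product of smaller regular semisimple Hessenberg varieties has no meaning; this is precisely why the representation is subtle. What Brosnan--Chow actually compute is not the power-sum expansion (character values) but the complete-homogeneous expansion: by Frobenius reciprocity $\langle \mathrm{Frob}(H^{2j}),h_\mu\rangle=\dim\bigl(H^{2j}(\Hess(S,h))\bigr)^{\mathfrak{S}_\mu}$, and the local invariant cycle theorem, applied to a degeneration of $S$ to a regular element $X_\mu$ with eigenvalue multiplicities $\mu$, identifies these invariants with $H^{2j}(\Hess(X_\mu,h))$ --- an irreducible regular Hessenberg variety, not a product --- whose Betti numbers are then matched with $\langle\omega X_G(\xvars;q),h_\mu\rangle$ using Tymoczko's pavings. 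So your reduction to traces $\mathrm{tr}\bigl(w\mid H^{2j}\bigr)$ aims the computation at a quantity the available geometry does not directly access; to repair the argument you would need to replace that step by the $h_\mu$-coefficient computation just described, or else supply a genuinely new mechanism for evaluating traces of the monodromy action.
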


Combining Propositions \ref{prop:CSFexpansionPtab} and \ref{thm:BC129}, we can connect the graded representation of the cohomology ring for Hessenberg varieties with $P$-tableaux as follows. 
\begin{equation}\label{eq:HessPtabConnection} \sum_{j=0}^{|E|} \mathrm{Frob}(H^{2j}(\Hess(S,h)))q^j = \omega X_G(\mathbf{x};q) = \sum_{\lambda\vdash n}\left( \sum_{T\in PT(\lambda)} q^{\inv_h(T)}\right) s_{\lambda'} \end{equation}

From this, polynomial presentations of the cohomology ring $H^{*}(\Hess(S,h))$ can be connected explicitly to sets of $P_h$-tableaux, by understanding the action of $\Sn$ and its decomposition into irreducible representations. For example, in Section \ref{sec:bijections}, we give explicit bijections between monomials in the cohomology ring for $h=(h(1),n,\ldots, n)$ with $P_h$-tableaux of shapes $(1^n)$ and $(2,1^{n-2})$.

\subsection{Symmetric Group Representations}

Irreducible representations of $\Sn$ are indexed by partitions of $n$, and we write them as $V_{\lambda}$. The trivial representation is indexed by $\lambda = (n)$, and the standard representation is indexed by $\lambda = (n-1,1)$. For more details on the following facts, and a general overview of $\Sn$ representation theory, see \cite{SaganSymmetric}. 

Given a partition $\lambda$ of $n$, a \textbf{standard tableau} of shape $\lambda$ is a $P$-tableau of shape $\lambda$ where $P$ is the total ordering on $[n]$. In other words, each row and column is strictly increasing. Given a standard tableau $T$ of shape $\lambda$, define the \textbf{Specht polynomial} $F_T$ as $$ F_T = \prod_{C}\prod_{\{i<j\}\in C} (x_j-x_i)$$ where the outer product is over all columns of $T$. In Example \ref{ex:simplePtableaux}, the leftmost $P$-tableau is also a standard tableau, and the corresponding Specht polynomial is $$F_T = (x_4-x_3)(x_4-x_2)(x_4-x_1)(x_3-x_2)(x_3-x_1)(x_2-x_1) $$ 

Given a fixed partition $\lambda$ of $n$, the subspace of $\C[x_1,\ldots, x_n]$ generated by $\{F_T\}_{T\in \SYT(\lambda)}$, where $\SYT(\lambda)$ is the set of standard tableaux $T$ of shape $\lambda$, is isomorphic to the irreducible $\Sn$-module $V_{\lambda}$. These submodules are called the \textbf{Specht modules}. $\Sn$ acts on a Specht module by permuting the indices: For $w\in \Sn$, $w\cdot (x_j-x_i)=(x_{w(j)}-x_{w(i)})$. 

\begin{definition}[\cite{GR21}, Definition 1.5]
    If $R$ is an $\Sn$-module which decomposes into irreducible $\Sn$-modules as $$R = \bigoplus_{\lambda} c_{\lambda}V_{\lambda}\,,$$ then a \textbf{higher Specht basis of $R$} is a set of elements $\mathcal{B}$ with a decomposition $\mathcal{B} = \bigcup_{\lambda} \bigcup_{i=1}^{c_\lambda} \mathcal{B}_{i,\lambda}$ such that the elements of $\mathcal{B}_{i,\lambda}$ are a basis of the $i$-th copy of $V_{\lambda}$ in the decomposition of $R$.  
\end{definition}

Higher Specht bases for $\Sn$-modules have been constructed in the context of the coinvariant ring \cite{AST97} and the generalized coinvariant ring \cite{GR21}. These bases are valuable since they give a natural grouping of basis elements into irreducible modules. 

A representation $V$ of $\Sn$ is a \textbf{permutation representation} if there exists a set $A$ acted on by $\Sn$ such that $V\cong \C A$, the vector space of $\C$-linear combinations of elements of $A$. In other words, elements of $\Sn$ act via permutation matrices on $\C A$. 

If the trivial representation of $\Sn$ is $V_{(n)}$, then the \textbf{natural permutation representation} of $\Sn$ is the induced representation $V_{(n)} \uparrow_{\mathfrak{S}_{n-1}\times \mathfrak{S}_1}^{\Sn}$. Here, $\Sn$ acts on a set of $n$ disjoint cosets of the subgroup $\mathfrak{S}_{n-1}\times \mathfrak{S}_1$, so this representation is isomorphic to the $\Sn$-module $\C^n$, where $\Sn$ acts by permuting the standard unit vectors $\vec{e_1},\ldots,\vec{e_n}$. The natural permutation representation decomposes into irreducibles as $V_{(n)}\oplus V_{(n-1,1)}$. Then we have: $$\mathrm{Frob}(V) = \mathrm{Frob}(V_{(n)})+\mathrm{Frob}(V_{(n-1,1)}) = s_{(n)} + s_{(n-1,1)} = h_{(n-1,1)}. $$
Notice that $\omega(h_{(n-1,1)}) = e_{(n-1,1)}$, so if an $\Sn$-module $V$ is a direct sum of natural permutation representations (and trivial representations, which have Frobenius character $s_{(n)}=h_{(n)}$), then we get the $h$-positivity of the character of this representation. In general, permutation representations yield $h$-positive symmetric functions. When the $\Sn$-module is $H^*(\Hess(S,h))$, we can apply $\omega$ to get the $e$-positivity of the associated chromatic symmetric function.

\section{Basis Elements when $h'=((n-1)^{n-m},n^m)$}\label{sec:transposebasis}

In this section, we introduce the results of Abe, Horiguchi, and Masuda in \cite{AHM}, including their realization of $H^*(\Hess(S,h))$ as a polynomial quotient ring, in the case that $h=(h(1),n,\ldots,n)$. We then define our basis for $h'=((n-1)^{n-m},n^m)$. 

The authors in \cite{AHM} defined the following elements of $H^{*}(\Hess(S,h))$ in terms of GKM graphs: Let $h=(h(1),n,\ldots, n)$. For $1\leq k\leq n$, and $w\in \Sn$: \begin{align*}
    x_k(w) &:= t_{w(k)} \\
    y_k(w) &:= \begin{cases} \prod_{\ell=2}^{h(1)}(t_k-t_{w(\ell)}) & \mathrm{if}\,\, w(1) = k \\ 0 & \mathrm{if}\,\, w(1)\neq k         
    \end{cases}
\end{align*}

\begin{figure}[ht] 
\begin{subfigure}{0.45\textwidth} \centering
\begin{tikzpicture}
    \tikzstyle{vertex} = [circle,draw=black, inner sep=2pt,fill=black];
    \node[vertex, label=below:{$t_2$}] (a) at (0,0) {};
    \node[vertex, label=right:{$t_3$}] (b) at (1,0.5) {};
    \node[vertex, label=right:{$t_1$}] (c) at (1,1.5) {};
    \node[vertex, label=left:{$t_1$}] (f) at (-1,0.5) {};
    \node[vertex, label=left:{$t_3$}] (e) at (-1,1.5) {};
    \node[vertex, label=above:{$t_2$}] (d) at (0,2) {};
    \draw[dashed] (b)--(c); 
    \draw[dashed] (e)--(f);
    \draw[double distance = 2pt] (a)--(b); 
    \draw[double distance = 2pt] (e)--(d);
    \draw (c)--(d); 
    \draw (f)--(a);
\end{tikzpicture} \caption{The class $x_2$.}
\end{subfigure}
\begin{subfigure}{0.45\textwidth} \centering
\begin{tikzpicture}
    \tikzstyle{vertex} = [circle,draw=black, inner sep=2pt,fill=black];
    \node[vertex, label=below:{$0$}] (a) at (0,0) {};
    \node[vertex, label=right:{$0$}] (b) at (1,0.5) {};
    \node[vertex, label=right:{$0$}] (c) at (1,1.5) {};
    \node[vertex, label=left:{$t_2-t_1$}] (f) at (-1,0.5) {};
    \node[vertex, label=left:{$t_2-t_3$}] (e) at (-1,1.5) {};
    \node[vertex, label=above:{$0$}] (d) at (0,2) {};
    \draw[dashed] (b)--(c); 
    \draw[dashed] (e)--(f); 
    \draw[double distance = 2pt] (a)--(b); 
    \draw[double distance = 2pt] (e)--(d); 
    \draw (c)--(d); 
    \draw (f)--(a);
\end{tikzpicture} \caption{The class $y_2$.}
\end{subfigure}
\caption{Two GKM classes for $h=(2,3,3)$, as defined in \cite{AHM}.} \label{fig:AHMclasses}
\end{figure}
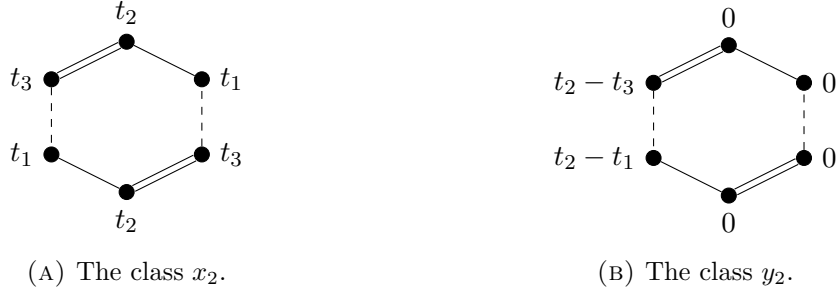

The authors showed that for each $k$, the tuples $x_k:=(x_k(w))_{w\in \Sn}$ and $y_k:=(y_k(w))_{w\in \Sn}$ satisfy the GKM condition. Further, they show that the $x_k$ and $y_{k}$ classes satisfy the following:

\begin{lemma}\label{lem:AHM4.1}(\cite{AHM}, Lemma 4.1)
    Suppose $h=(h(1),n,\ldots, n)$. The following hold: \begin{enumerate}
        \item $y_k\,y_{k'} = 0$ for $k\neq k'$.
        \item $x_1\,y_k = t_k\,y_k$ for all $k$.
        \item $y_k\,\prod_{\ell=h(1)+1}^{n}(t_k-x_{\ell}) = \prod_{\ell=2}^{n}(t_k-x_{\ell})$ for all $k$.
        \item $\sum_{k=1}^n\,y_k =\prod_{\ell=2}^{h(1)}(x_1-x_{\ell})$.
    \end{enumerate}
        We use the convention $\prod_{\ell=n+1}^{n}(t_k-x_{\ell}) =1$ in equation $(3)$ and $\prod_{\ell=2}^{1}(x_1-x_{\ell})=1$ in equation $(4)$.
\end{lemma}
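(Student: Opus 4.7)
The plan is to verify each of the four identities vertex-by-vertex in the GKM presentation. By Proposition \ref{prop:eqcohoring}, two elements of $H^{*}_T(\Hess(S,h))$ agree exactly when their $w$-components agree in $\C[t_1,\ldots,t_n]$ for every $w\in \Sn$, so each claim reduces to a short polynomial identity indexed by permutations. The key structural observation driving every case is that $y_k$ has \emph{one-point support at $1$}: by definition, $y_k(w)=0$ unless $w(1)=k$.

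For (1), no permutation $w$ can simultaneously satisfy $w(1)=k$ and $w(1)=k'$ when $k\neq k'$, so $y_k$ and $y_{k'}$ have disjoint supports and their product vanishes at every vertex. For (2), on the support of $y_k$ we have $x_1(w)=t_{w(1)}=t_k$, so $(x_1 y_k)(w)=t_k\cdot y_k(w)$; off the support both sides vanish. Both identities are immediate from the definitions.

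For (3), on the support of $y_k$ (where $w(1)=k$), the defining product for $y_k(w)$ concatenates with the extra factors $\prod_{\ell=h(1)+1}^{n}(t_k-t_{w(\ell)})$ to give $\prod_{\ell=2}^{n}(t_k-t_{w(\ell)})$, exactly matching the $w$-component of the right-hand side. Off the support, the left-hand side is zero; the right-hand side also vanishes by a pigeonhole argument, because $w$ is a permutation with $w(1)\neq k$, forcing some $w(\ell)$ with $2\le \ell\le n$ to equal $k$ and producing a factor $(t_k-t_k)=0$. For (4), at each vertex $w$ the sum $\sum_{k=1}^{n} y_k(w)$ collapses to the single nonzero term $y_{w(1)}(w)=\prod_{\ell=2}^{h(1)}(t_{w(1)}-t_{w(\ell)})$, which is precisely the $w$-component of $\prod_{\ell=2}^{h(1)}(x_1-x_\ell)$.

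No step presents a genuine obstacle; the only care required is the bookkeeping between the cases $w(1)=k$ and $w(1)\neq k$, together with the pigeonhole observation in (3) that makes the right-hand side vanish off the support of $y_k$. Everything else is unwinding definitions.
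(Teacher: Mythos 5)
Your proof is correct, and it is essentially the same argument the paper gives for the analogous transpose statement (Lemma \ref{lem:Equiv_properties}): a vertex-by-vertex verification in the GKM presentation using the one-point support of $y_k$, including the same pigeonhole observation for the vanishing of the right-hand side of (3) off the support. The paper cites this particular lemma to \cite{AHM} without reproving it, but your argument is the standard one and matches the paper's own proof of its mirror image with $w(1)$ replaced by $w(n)$.
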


Using these relations, the authors show that the classes $t_k$, $x_k$, and $y_k$ generate $H^{*}_T(\Hess(S,h))$. In particular, via the homomorphism $H^{*}_T(\Hess(S,h))\to H^{*}(\Hess(S,h))$ which takes the quotient by the torus fixed points $\langle t_1,\ldots, t_n\rangle$, they show that $H^{*}(\Hess(S,h))$ has the following presentation:

\begin{proposition}[\cite{AHM}, Theorem 4.3]\label{thm:AHM4.3}
If $h = (h(1),n,\ldots,n)$, then the cohomology ring of the regular semisimple Hessenberg variety $\Hess(S,h)$ is given by $$ H^{*}(\Hess(S,h)) \cong \Z[x_1,\ldots,x_n,y_1,\ldots,y_n] / I $$
where $\deg(x_i) = 2$, $\deg(y_k) = 2(h(1)-1)$, and $I$ is the homogeneous ideal generated by the following types of elements:
\begin{enumerate}
    \item $y_k y_{k'}$ $(1\leq k\neq k' \leq n)$
    \item $x_1 y_k$ $(1\leq k\leq n)$
    \item $(\prod_{\ell=h(1)+1}^n (-x_{\ell}))y_k - \prod_{\ell=2}^n(-x_{\ell})$ $(1\leq k\leq n)$
    \item $\sum_{k=1}^n y_k - \prod_{\ell=2}^{h(1)} (x_1-x_{\ell})$
    \item The $i-$th elementary symmetric polynomial $e_i(x_1,\ldots,x_n)$ $(1\leq i\leq n)$
\end{enumerate}
\end{proposition}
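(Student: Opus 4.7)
The plan is to parallel the proof of Theorem \ref{thm:A} in the transposed setting, relying on the presentation of $H^*(\Hess(S,h))$ for $h=((n-1)^{n-m},n^m)$ that is established in Section \ref{sec:transposebasis}. That section defines GKM classes $x_k$ and $y_k$ analogous to those of Abe--Horiguchi--Masuda and records defining relations mirroring those of Proposition \ref{thm:AHM4.3} with the variable indices reversed: $y_ky_{k'}=0$ for $k\ne k'$, the relation $x_n y_k=0$ (the mirror of $x_1y_k=0$), a product identity involving $y_k$ and $\prod x_\ell$ for $\ell\le n-m$, a sum identity $\sum_k y_k = \prod(x_n-x_\ell)$, and the elementary symmetric relations $e_i(x_1,\ldots,x_n)=0$.

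For spanning, I would reduce an arbitrary polynomial in the generators using these relations in turn. The relations $y_ky_{k'}=0$ force at most one $y$-factor in each monomial; $x_n y_k=0$ eliminates $x_n$ from any monomial carrying a $y$, explaining why the second family only uses $x_1,\ldots,x_{n-1}$; the product relation kills any monomial containing the forbidden factor; and the elementary symmetric relations push the exponents into the staircase range $0\le i_j\le j-1$. For any monomial with exactly one $y_k$, writing $y_k=(y_k-y_1)+y_1$ and absorbing the $y_1$ summand into the pure $x$-part via the sum identity produces a monomial of the claimed second-kind shape $x^\alpha(y_{k+1}-y_1)$.

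Linear independence then follows from a dimension count. The cardinalities of the two proposed sets can be read off directly from the staircase bounds and the forbidden-factor constraint. The target dimension is supplied cleanly by the Corollary following Proposition \ref{prop:Transpose}: $\Hess(S,h)\cong \Hess(S,h'')$ where $h''=(m,n,\ldots,n)$, so $\dim H^*(\Hess(S,h))$ equals the basis cardinality already computed in Theorem \ref{thm:A}, and the two counts agree via a transparent bijection between the reversed staircase monomial sets.

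The main obstacle is upgrading this to a higher Specht basis, i.e.\ matching each basis element to a specific irreducible summand. The first family lies in $\C[x_1,\ldots,x_n]\subset H^*(\Hess(S,h))$, on which Tymoczko's action is the natural permutation action on the $x_i$'s, and I would identify these monomials as a basis for a direct sum of copies of the trivial representation $V_{(n)}$. The second family should organize, for each fixed $x$-part, into an $(n-1)$-dimensional subspace spanned by the $(y_{k+1}-y_1)$'s that is $\Sn$-invariant and isomorphic to the standard representation $V_{(n-1,1)}$; the key ingredient is that $\Sn$ acts on the $y_k$'s by permuting their index, mirroring the classical realization of $V_{(n-1,1)}$ as $\langle e_k-e_1\rangle$ inside the defining permutation module. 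To rigorously match the decomposition with that of Theorem \ref{thm:A}, I would invoke the transpose isomorphism from Proposition \ref{prop:Transpose} and check its compatibility with the $\Sn$-action on cohomology up to the reversal of indices; verifying this compatibility is the delicate step.
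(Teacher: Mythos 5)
Your proposal does not address the statement at hand. Proposition \ref{thm:AHM4.3} asserts the quotient-ring presentation $H^*(\Hess(S,h))\cong \Z[x_1,\ldots,x_n,y_1,\ldots,y_n]/I$ for $h=(h(1),n,\ldots,n)$; in the paper this is a cited result of Abe, Horiguchi, and Masuda (their Theorem 4.3), and any independent proof would have to (i) verify that the classes $x_k,y_k$ defined by the GKM tuples satisfy relations (1)--(5), which is the content of Lemma \ref{lem:AHM4.1} after passing to the quotient by $t_1,\ldots,t_n$; (ii) show that these classes generate the cohomology ring; and (iii) show that $I$ exhausts all relations, e.g.\ by exhibiting a spanning set of monomials for $\Z[x_1,\ldots,x_n,y_1,\ldots,y_n]/I$ (the sets of Proposition \ref{thm:AHM4.5}) and matching its cardinality, degree by degree, against the known Betti numbers of $\Hess(S,h)$. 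None of these steps appears in your outline.

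What you have sketched instead is essentially the proof of Theorem \ref{thm:E}, the higher Specht basis in the transposed setting $h=((n-1)^{n-m},n^m)$: you build a monomial spanning set, count it against a known dimension, and organize it into trivial and standard representations. Moreover, your opening sentence takes as input ``the presentation of $H^*(\Hess(S,h))$ \ldots established in Section \ref{sec:transposebasis}'' --- but that presentation is precisely what Proposition \ref{thm:AHM4.3} (and its transposed analogue) asserts, so as an argument for this statement the proposal is circular. If your goal had been Theorem \ref{thm:E}, the outline would be broadly reasonable and close in spirit to the paper's treatment; as a proof of Proposition \ref{thm:AHM4.3} it proves a different statement, for a different Hessenberg function, while assuming the conclusion.
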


Additionally, the authors use the relations in $I$ to identify a basis of this space:

\begin{proposition}[\cite{AHM}, Remark 4.5] \label{thm:AHM4.5}
From the proof of Proposition \ref{thm:AHM4.3}, the following two types of monomials form a $\Z$-basis of $H^{*}(\Hess(S,h))$ when $h = (h(1),n,\ldots,n)$:
\begin{equation}
    x_1^{i_1}x_2^{i_2}\cdots x_n^{i_n} \,\,\,\,  \mbox{ which does not contain the factor } \prod_{\ell=1}^{h(1)} x_{\ell}
\end{equation} \vspace{-0.5cm}
\begin{equation}
    x_n^{\ell_1}x_{n-1}^{\ell_2}\cdots x_2^{\ell_{n-1}} y_k \,\,\,\, \mbox{ which does not contain the factor } \prod_{\ell=h(1)+1}^{n} x_{\ell} 
\end{equation}
running over all $0\leq i_j \leq n-j$ in the first equation, and over all $0\leq \ell_j\leq n-1-j$, and $1\leq k\leq n-1$ in the second equation. 
\end{proposition}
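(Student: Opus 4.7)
The plan is to combine a spanning argument with a direct cardinality count. Since $\Hess(S,h)$ admits an affine paving with $n!$ cells indexed by $\Sn$, the cohomology $H^{*}(\Hess(S,h))$ is torsion-free of rank $n!$; hence if the union of the two families spans $H^{*}(\Hess(S,h))$ and has exactly $n!$ elements, it must be a $\Z$-basis.

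For the spanning argument I work entirely within the polynomial presentation of Proposition \ref{thm:AHM4.3}. Relations (1) $y_k y_{k'}=0$ and (2) $x_1 y_k=0$ reduce every element modulo $I$ to a $\Z$-combination of monomials that are either $y$-free or of the form $y_k\cdot(\text{monomial in }x_2,\dots,x_n)$; relation (5), the vanishing of $e_i(x_1,\dots,x_n)$, then reduces every $x$-monomial to Artin form $x_1^{i_1}\cdots x_n^{i_n}$ with $0\leq i_j\leq n-j$. To eliminate Artin monomials carrying the forbidden factor $\prod_{\ell=1}^{h(1)}x_\ell$, I multiply relation (4) by $x_1$ and apply (2) to obtain the purely $x$-identity
\begin{equation*}
    x_1\prod_{\ell=2}^{h(1)}(x_1-x_\ell)=0,
\end{equation*}
whose expansion rewrites $x_1 x_2\cdots x_{h(1)}$ as a $\Z$-combination of monomials $x_1^{a}\prod_{i\in S}x_i$ with $a\geq 2$ and $S\subsetneq\{2,\dots,h(1)\}$. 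For monomials linear in $y_k$, relation (3) yields the companion identity
\begin{equation*}
    y_k\prod_{\ell=h(1)+1}^n x_\ell = (-1)^{h(1)-1}\,x_2 x_3\cdots x_n,
\end{equation*}
which transfers every $y$-monomial carrying the tail factor $\prod_{\ell=h(1)+1}^n x_\ell$ into the $y$-free case already handled.

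For the cardinality count, the $n!$ Artin monomials of the first type split by whether they contain $\prod_{\ell=1}^{h(1)}x_\ell$: the bad ones require $i_j\geq 1$ for $j\leq h(1)$ and number $\prod_{j=1}^{h(1)}(n-j)\cdot(n-h(1))!=(n-h(1))(n-1)!$, leaving $h(1)(n-1)!$ good monomials of the first type. For the second type, the total of $(n-1)(n-1)!$ exponent-tuple/index pairs contains $(h(1)-1)(n-1)!$ bad monomials by a parallel computation, leaving $(n-h(1))(n-1)!$ good monomials. Summing, $h(1)(n-1)!+(n-h(1))(n-1)!=n(n-1)!=n!$, matching the rank of $H^{*}(\Hess(S,h))$.

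The main obstacle is controlling the interleaved reductions rigorously: the substitution derived from (4) returns terms that need not be in Artin form, and the subsequent re-reduction by (5) can temporarily reinsert a forbidden factor. A clean way to close this loop is to let $W_1$ and $W_2$ denote the $\Z$-spans of the good type-1 and type-2 monomials, and to verify directly, using the relations above, that $W_1+W_2$ is closed under multiplication by each generator $x_i$ and $y_k$. One organizes the check so that a chosen monovariant (for instance the pair (total degree, $\sum_{j=2}^{h(1)}i_j$) under lex order) strictly decreases at each rewriting step, forcing termination. Since $W_1+W_2$ then contains $1$ and is stable under all generators, it equals $H^{*}(\Hess(S,h))$, and combining with the cardinality count finishes the proof.
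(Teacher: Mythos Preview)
The paper does not give its own proof of this proposition; it is quoted verbatim from \cite{AHM} (Remark~4.5), so there is nothing in-paper to compare your argument against. Your overall strategy---prove spanning from the relations of Proposition~\ref{thm:AHM4.3}, then match the cardinality $n!$ against the rank coming from the affine paving---is sound, and your count is correct.

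There are, however, two real gaps in the spanning half. First, you never eliminate $y_n$. The type-2 family only uses $y_1,\dots,y_{n-1}$, and relations (1), (2), (3), and your $x_1$-multiple of (4) do not touch a bare $y_n$; you must use relation~(4) itself to write $y_n=\prod_{\ell=2}^{h(1)}(x_1-x_\ell)-\sum_{k<n}y_k$. (Relatedly, you should say why the $x$-part of a type-2 monomial lands in the shifted Artin range $0\le\ell_j\le n-1-j$: this follows because (2) applied to (5) gives $e_i(x_2,\dots,x_n)\,y_k=0$, i.e.\ a coinvariant presentation in $x_2,\dots,x_n$.) Second, your termination argument is not solid. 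The substitution coming from $x_1\prod_{\ell=2}^{h(1)}(x_1-x_\ell)=0$ replaces $x_1x_2\cdots x_{h(1)}$ by terms with $x_1$-exponent $\ge 2$; applied to an Artin monomial with $i_1$ close to $n-1$ this pushes the $x_1$-exponent above $n-1$, and the subsequent reduction by~(5) (e.g.\ via $h_n(x_1,\dots,x_n)=0$) can reintroduce factors $x_2,\dots,x_{h(1)}$, so the lex pair $(\deg,\sum_{j=2}^{h(1)}i_j)$ you propose need not decrease. You will need a more careful monovariant or a direct Gr\"obner-type argument on the single extra relation $f=x_1\prod_{\ell=2}^{h(1)}(x_1-x_\ell)$ inside the coinvariant ring (for instance, expanding $f$ in the Artin basis and choosing a term order under which $x_1x_2\cdots x_{h(1)}$ is the leading monomial).
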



\subsection{The Cohomology Basis for the Transpose Hessenberg Function} \,

Recall from Proposition \ref{prop:Transpose} that if $S$ is a regular semisimple matrix, then $\Hess(S,h)\cong \Hess(S,h')$, where $h'$ is the transpose Hessenberg function to $h$. When $h=(h(1),n,\ldots,n)$, the transpose Hessenberg function is $h' = ((n-1)^{n-h(1)},n^{h(1)})$. In particular, we are motivated to find a similar basis for $\Hess(S,h')$ as Abe, Horiguchi, and Masuda did for $\Hess(S,h)$. 

Building from the GKM graphs defined in \cite{AHM}, we define the following when the Hessenberg function is $h' = ((n-1)^{n-m},\,n^{m})$. 

\begin{definition}\label{def:TransposeClasses}
    For $1\leq k\leq n$, and $w\in \Sn$, define:
    \begin{align*}
    x_{k}(w) &:= t_{w(k)} \\ 
    y_{k}(w) &:= \begin{cases}
        \prod_{\ell = n-m+1}^{n-1} (t_{k}-t_{w(\ell)}) & \mathrm{if}\,\, w(n) = k \\
        0 & \mathrm{if}\,\, w(n) \neq k 
    \end{cases}
    \end{align*} 
\end{definition}

An example of these GKM graphs is given in Figure \ref{fig:KJSclasses}.

\begin{figure}[ht] 
\begin{subfigure}{0.45\textwidth} \centering
\begin{tikzpicture}
    \tikzstyle{vertex} = [circle,draw=black, inner sep=2pt,fill=black];
    \node[vertex, label=below:{$t_2$}] (a) at (0,0) {};
    \node[vertex, label=right:{$t_3$}] (b) at (1,0.5) {};
    \node[vertex, label=right:{$t_1$}] (c) at (1,1.5) {};
    \node[vertex, label=left:{$t_1$}] (f) at (-1,0.5) {};
    \node[vertex, label=left:{$t_3$}] (e) at (-1,1.5) {};
    \node[vertex, label=above:{$t_2$}] (d) at (0,2) {};
    \draw[dashed] (b)--(c); 
    \draw[dashed] (e)--(f);
    \draw[double distance = 2pt] (a)--(b); 
    \draw[double distance = 2pt] (e)--(d);
    \draw (c)--(d); 
    \draw (f)--(a);
\end{tikzpicture} \caption{The class $x_2$.}
\end{subfigure}
\begin{subfigure}{0.45\textwidth} \centering
\begin{tikzpicture}
    \tikzstyle{vertex} = [circle,draw=black, inner sep=2pt,fill=black];
    \node[vertex, label=below:{$0$}] (a) at (0,0) {};
    \node[vertex, label=right:{$t_2-t_3$}] (b) at (1,0.5) {};
    \node[vertex, label=right:{$t_2-t_1$}] (c) at (1,1.5) {};
    \node[vertex, label=left:{$0$}] (f) at (-1,0.5) {};
    \node[vertex, label=left:{$0$}] (e) at (-1,1.5) {};
    \node[vertex, label=above:{$0$}] (d) at (0,2) {};
    \draw[dashed] (b)--(c); 
    \draw[dashed] (e)--(f); 
    \draw[double distance = 2pt] (a)--(b); 
    \draw[double distance = 2pt] (e)--(d); 
    \draw (c)--(d); 
    \draw (f)--(a);
\end{tikzpicture} \caption{The class $y_2$.}
\end{subfigure}
\caption{Two GKM classes for $h=(2,3,3)$, as given in Definition \ref{def:TransposeClasses}.} \label{fig:KJSclasses}
\end{figure}
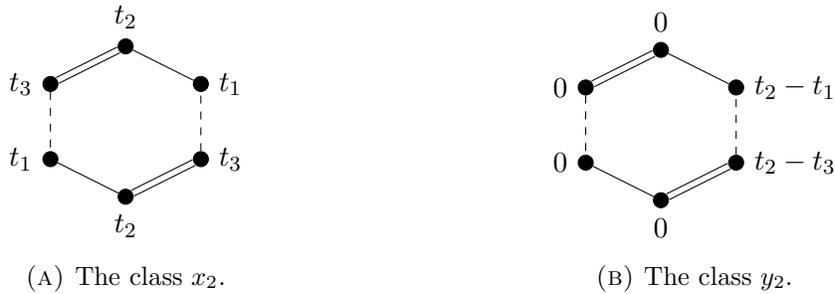

As above, define $x_k:= (x_k(w))_{w\in \Sn}$ and $y_k:= (y_k(w))_{w\in\Sn}$ First, we show that these tuples are elements of $H^{*}(\Hess(S,h'))$ by showing that they satisfy the GKM condition. 

\begin{lemma}
    Given $w\in \Sn$, if there exists $i<j\leq h'(i)$ such that $w' = w(ij)$, then $x_k(w) - x_k(w')$ and $y_{k}(w)-y_{k}(w')$ are divisible by $t_{w(i)}-t_{w(j)}$. 
\end{lemma}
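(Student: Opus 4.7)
The plan is to dispatch $x_k$ quickly and then attack $y_k$ by a case analysis driven by whether the transposition $(ij)$ involves position $n$ and by how $i, j$ sit relative to the product range $[n-m+1, n-1]$ appearing in the definition of $y_k$. For $x_k$, since $x_k(w) = t_{w(k)}$, the difference $x_k(w) - x_k(w')$ is zero unless $k \in \{i, j\}$, in which case it equals $\pm(t_{w(i)} - t_{w(j)})$, and the divisibility is immediate.

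For $y_k$, I would split into two main cases based on whether $j \leq n-1$ or $j = n$. Note that the condition $i < j \leq h'(i)$ forces $j = n$ to occur only when $i \geq n-m+1$, so in that case $i$ automatically lies inside the product range. In the first case $(j \leq n-1)$, we have $w(n) = w'(n)$, so $y_k(w)$ and $y_k(w')$ are simultaneously zero or simultaneously products. In the nonzero subcase, the two products differ only in the factors indexed by positions $i$ and $j$: if both of $i, j$ lie in $[n-m+1, n-1]$ or neither does, the products coincide and the difference is zero; if exactly one of them (say $j$) lies in the range, then factoring out the common part collapses the difference to $(t_{w(i)} - t_{w(j)})$ times a common product.

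The more delicate case is $j = n$. Here $w(n) \neq w'(n)$, so at most one of $y_k(w), y_k(w')$ is nonzero. The key observation is that since $i \in [n-m+1, n-1]$, position $i$ contributes a factor to the product. When $y_k(w) \neq 0$ (so $w(n) = w(j) = k$), the factor at $\ell = i$ equals $(t_k - t_{w(i)}) = -(t_{w(i)} - t_{w(j)})$; when $y_k(w') \neq 0$ (so $w(i) = k$ and $w'(n) = k$), the factor at $\ell = i$ in $y_k(w')$ equals $(t_k - t_{w(j)}) = (t_{w(i)} - t_{w(j)})$. Either way, the unique nonzero value is visibly divisible by $t_{w(i)} - t_{w(j)}$, so the difference is too.

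The main obstacle is the bookkeeping in the $j = n$ case: one must correctly identify which factor in the length-$(m-1)$ product realizes the divisor $t_{w(i)} - t_{w(j)}$, and track the sign flip coming from whether $w(n) = k$ or $w'(n) = k$. Once the case split is laid out cleanly, each case reduces to a short calculation, and no deeper structural input beyond Definition \ref{def:TransposeClasses} is required.
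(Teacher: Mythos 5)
Your proposal is correct and follows essentially the same route as the paper's proof: the $x_k$ case by checking $k\in\{i,j\}$ versus $k\notin\{i,j\}$, and the $y_k$ case split on $j\leq n-1$ versus $j=n$, with the same observations that in the first case the two products share all but possibly one factor, and in the second case the unique nonzero value already contains the factor $\pm(t_{w(i)}-t_{w(j)})$ at index $\ell=i$. No substantive differences to report.
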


\begin{proof}
    Suppose that we have $w\in \Sn$, and $i<j$ such that $j\leq h'(i)$ and $w' = w(ij)$. 
    
    First, we show that the GKM condition holds for all $x_{k}$. If $k=i$, then $x_{i}(w) - x_{i}(w') = t_{w(i)}-t_{w'(i)} = t_{w(i)} - t_{w(j)}$. Similarly, if $k=j$, then $x_j(w)-x_j(w') = t_{w(j)}-t_{w(i)}$. If $k\neq i$ and $k\neq j$, then $x_k(w) - x_k(w') = t_{w(k)}-t_{w'(k)} = t_{w(k)}-t_{w(k)} = 0$.  

    Now, we show that the GKM condition holds for all $y_k$. For the first case, suppose that $j<n$. If $w(n)\neq k$, then we have $w'(n)\neq k$, so $y_k(w)-y_k(w')=0-0 = 0$. If $w(n)=k$, then we have $w'(n)=k$. If $n-m+1\leq i$ or $j< n-m+1$, then $\prod_{\ell=n-m+1}^{n-1}(t_k-t_{w(\ell)}) = \prod_{\ell=n-m+1}^{n-1}(t_k-t_{w'(\ell)})$, since the index $\ell$ includes both or neither of $i$ and $j$. Hence $y_k(w) = y_k(w')$, so $y_k(w) - y_k(w') = 0$. If $i < n-m+1 \leq j$, then $y_k(w)$ contains the factor $(t_k-t_{w(j)})$, $y_k(w')$ contains the factor $(t_k-t_{w'(j)}) = (t_k-t_{w(i)})$, and all other factors are shared. Hence $(t_k-t_{w(j)})-(t_k-t_{w(i)}) = t_{w(i)}-t_{w(j)}$ divides $y_k(w) - y_k(w')$.

    For the second case, suppose that $j=n$, so $w(j) = w(n)$. Since $j\leq h'(i)$, we have that $h'(i) = n$, so $n-m+1\leq  i$. If $w(n)\neq k$ and $w'(n)\neq k$, then $y_k(w)-y_k(w')=0$. If $w(n)= k$, then $w' = w\cdot (in)$, so $w'(n)\neq k$. Thus $y_k(w)$ contains the factor $(t_k-t_{w(i)}) = (t_{w(j)}-t_{w(i)})$, and $y_{k}(w')=0$. Similarly, if $w'(n) = k$ and $w(n)\neq k$, then $y_{k}(w)=0$, and $y_k(w')$ contains the factor $(t_k-t_{w'(i)}) = (t_{w'(j)}-t_{w'(i)}) = (t_{w(i)}-t_{w(j)})$. Hence, in all cases, $t_{w(i)}-t_{w(j)}$ divides $y_k(w)-y_k(w')$.
\end{proof}

Since the $x_k$ and $y_k$ are well-defined elements of $H^*(\Hess(S,h'))$, we next find a set of similar relations to Lemma \ref{lem:AHM4.1}.

\begin{lemma} \label{lem:Equiv_properties}
    The following hold: \begin{enumerate}
        \item $y_k\, y_{k'} = 0$ for all $k\neq k'$.
        \item $x_n\,y_{k} = t_k\,y_k$ for all $k$.
        \item $y_k\,\prod_{\ell=1}^{n-m}(t_k-x_{\ell}) = \prod_{\ell=1}^{n-1}(t_k-x_{\ell})$ for all $k$.
        \item $\sum_{k=1}^{n}y_{k} = \prod_{\ell = n-m+1}^{n-1}(x_n-x_{\ell})$.
    \end{enumerate}
    We have the convention $\prod_{\ell=1}^{0}(t_k-x_{\ell}) = 1$ in equation $(3)$, and $\prod_{\ell=n}^{n-1}(x_n-x_{\ell}) = 1$ in equation $(4)$. 
\end{lemma}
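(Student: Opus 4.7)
The plan is to prove all four identities by evaluating at an arbitrary vertex $w \in \Sn$ of the GKM graph, since multiplication and addition in $\bigoplus_{w\in\Sn}\C[t_1,\ldots,t_n]$ are componentwise. The key structural observation that powers every case is that $y_k(w)$ is supported on the single coset of permutations with $w(n) = k$, and when $w(n) = k$ the factor $t_{w(n)} = t_k$ coincides with the leading symbol of $y_k$. This single remark collapses three of the four identities essentially immediately.

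For (1), at any $w$ at most one of $w(n) = k$ or $w(n) = k'$ can hold when $k\neq k'$, so $y_k(w)\,y_{k'}(w)$ is always zero. For (2), the identity holds trivially on any $w$ with $w(n) \neq k$ since both sides vanish, and on $w$ with $w(n) = k$ we have $x_n(w) = t_{w(n)} = t_k$, giving $x_n(w)\,y_k(w) = t_k\,y_k(w)$ as required. For (4), only the summand $y_{w(n)}(w)$ contributes, and by the definition of $y_{w(n)}$ it equals $\prod_{\ell=n-m+1}^{n-1}(t_{w(n)} - t_{w(\ell)})$, which matches $\prod_{\ell=n-m+1}^{n-1}(x_n(w) - x_\ell(w))$ on the right side.

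The only identity that requires a brief case split is (3), and this is the step I expect to be the most delicate (though still elementary). At a vertex $w$ with $w(n) = k$, the factors $\prod_{\ell=n-m+1}^{n-1}(t_k - t_{w(\ell)})$ coming from $y_k(w)$ combine with the factors $\prod_{\ell=1}^{n-m}(t_k - t_{w(\ell)})$ to produce $\prod_{\ell=1}^{n-1}(t_k - t_{w(\ell)})$, which is the right-hand side evaluated at $w$. At a vertex $w$ with $w(n) \neq k$, the left side vanishes because $y_k(w) = 0$, while the right side vanishes because $k$ must appear as $w(\ell)$ for some $\ell \in \{1,\ldots,n-1\}$, producing a factor $(t_k - t_k) = 0$ in $\prod_{\ell=1}^{n-1}(t_k - x_\ell)(w)$. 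The two sides agree on both kinds of vertices.

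There is no substantive obstacle here: once one adopts the componentwise perspective and isolates the principle that $y_k$ is supported where $w(n) = k$, each identity reduces to a one-line verification. The main thing to watch is the indexing conventions, particularly the empty-product conventions $\prod_{\ell=1}^{0}(t_k - x_\ell) = 1$ and $\prod_{\ell=n}^{n-1}(x_n - x_\ell) = 1$ declared in the statement, which handle the edge cases $m = n-1$ in (3) and $m = 1$ in (4) uniformly.
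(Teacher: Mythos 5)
Your proposal is correct and follows essentially the same argument as the paper: evaluate each identity componentwise at a vertex $w \in \Sn$, use that $y_k$ is supported on permutations with $w(n)=k$, and in case (3) note that when $w(n)\neq k$ the right-hand side vanishes because the factor $(t_k - t_k)$ appears in the product over $\ell \le n-1$. No differences of substance.
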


\begin{proof} Let $w\in \Sn$.

    (1) If $k\neq k'$, then $w(n)=k$ or $w(n)=k'$ (or neither), but not both. So we have $y_{k}(w)y_{k'}(w)=0$.

    (2) If $w(n)=k$, then $x_n(w) = t_{w(n)}=t_k$. If $w(n)\neq k$, then $y_k(w)=0$. In either case, $x_n(w)y_k(w)=t_k\,y_k(w)$. 

    (3) If $w(n) = k$, then we have $$ y_k(w) \cdot \prod_{\ell=1}^{n-m}(t_k-x_{\ell}(w)) = \prod_{\ell=n-m+1}^{n-1}(t_k-t_{w(\ell)}) \cdot \prod_{\ell=1}^{n-m}(t_k-t_{w(\ell)}) = \prod_{\ell=1}^{n-1} (t_k-x_{\ell}(w))$$ If $w(n)\neq k$, then $y_k(w) = 0$. Further, we must have $w(i)=k$ for some $i<n$, and so $\prod_{\ell=1}^{n-1}(t_k-t_{w(\ell)})$ contains the factor $(t_k-t_k)$, and is thus $0$. 

    (4) Suppose $w(n)=k'$. Then for any $k\neq k'$, we have $y_k(w) = 0$. We have $$ \sum_{k=1}^{n} y_{k}(w) = \prod_{\ell= n-m+1}^{n-1}(t_{k'}-t_{w(\ell)}) = \prod_{\ell= n-m+1}^{n-1}(t_{w(n)}-t_{w(\ell)}) = \prod_{\ell= n-m+1}^{n-1}(x_{n}(w) - x_{\ell}(w)) $$
\end{proof}

Notice that the relations in Lemma \ref{lem:Equiv_properties} are the same as those in Lemma \ref{lem:AHM4.1}, with each $x_i$ swapped with $x_{n+1-i}$, and supposing that $m=h(1)$.  Under this isomorphism of polynomial rings, we obtain the following results.

\begin{proposition}
    If $h' = ((n-1)^{n-m},n^m)$, then the classes $x_k$, $y_k$, $t_k$ (for $1\leq k\leq n$) generate $H^{*}_T(\Hess(S,h'))$ as a $\Z$-algebra. 
\end{proposition}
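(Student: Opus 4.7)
My plan is to mirror the Abe--Horiguchi--Masuda argument for $h=(h(1),n,\ldots,n)$, exploiting the explicit parallel noted after Lemma \ref{lem:Equiv_properties}: the four relations satisfied by our new classes $x_k,y_k$ differ from the AHM relations of Lemma \ref{lem:AHM4.1} only by the index substitution $x_i \leftrightarrow x_{n+1-i}$ (with $m$ playing the role of $h(1)$). Since the AHM generation proof uses only the GKM presentation of $H^*_T$ together with those four relations, the same line of reasoning should transfer once the reflection is accounted for.

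Concretely, I would take an arbitrary GKM-tuple $\alpha=(\alpha(w))_{w\in\Sn}\in H^*_T(\Hess(S,h'))$ and argue by induction that it lies in the subring generated by the $t_k$, $x_k$, $y_k$. The classes $t_k$ and $x_k$ already capture everything pulled back from $H^*_T(\mathrm{Fl}(\C^n))$, since $x_k$ evaluates on the $w$-component as $t_{w(k)}$, and these are known to generate the equivariant cohomology of the flag variety. The remaining Hessenberg-specific part of $\alpha$ must then be absorbed using the $y_k$. Because $y_k$ is supported precisely on those $w$ with $w(n)=k$, one can peel off its contribution by matching $\alpha$ at a single permutation in that set; relation (3) of Lemma \ref{lem:Equiv_properties}, namely $y_k\prod_{\ell=1}^{n-m}(t_k-x_\ell)=\prod_{\ell=1}^{n-1}(t_k-x_\ell)$, governs how this term interacts with the generators inherited from the flag variety, while relations (1), (2), and (4) handle the remaining compatibilities. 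This is precisely the algorithm that AHM run, simply reflected.

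An alternative and possibly cleaner route would be to apply the isomorphism $\Hess(S,h)\cong \Hess(S,h')$ from the corollary of Proposition \ref{prop:Transpose} (taking $h=(m,n,\ldots,n)$) to transport the AHM generators directly. The subtlety is that the isomorphism in Proposition \ref{prop:Transpose} is not a priori $T$-equivariant for the same torus action on both sides, so one would need to conjugate by the permutation matrix $W$ to match up torus actions and then verify that the pullbacks of the AHM $x_k, y_k$ agree with the classes of Definition \ref{def:TransposeClasses} under the index reflection $i\leftrightarrow n+1-i$.

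The step I expect to be the main obstacle is the bookkeeping in the inductive generation argument: the GKM graph for $h'$ has a very different edge structure than for $h$, with many edges clustered among permutations sharing a common value of $w(n)$, so it matters in what order one subtracts the $y_k$ contributions and which polynomial in $x_1,\ldots,x_n$ one multiplies each $y_k$ by before doing so. Lemma \ref{lem:Equiv_properties} guarantees the algorithm terminates exactly as it does in the AHM case, but writing down the induction cleanly under the index reflection is where the most care is needed.
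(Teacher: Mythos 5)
Your primary route---transferring the Abe--Horiguchi--Masuda generation argument via the formal identity of the relations in Lemma \ref{lem:Equiv_properties} with those of Lemma \ref{lem:AHM4.1} under the substitution $x_i \leftrightarrow x_{n+1-i}$ (with $m$ playing the role of $h(1)$)---is exactly the paper's approach; the paper states the proposition with no further argument beyond this observation about the isomorphism of polynomial rings. Your sketch of the inductive peeling of the $y_k$ contributions and your caveat about the $T$-equivariance of the transpose isomorphism are both sound.
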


We now give the quotient ring presentation of $H^{*}(\Hess(S,h'))$. 

\begin{theorem}
    If $h' = ((n-1)^{n-m},n^m)$, then the cohomology ring of $H^*(\Hess(S,h'))$ is given by \begin{equation*}
        H^*(\Hess(S,h')) \cong \Z[x_1,\ldots, x_n,y_1,\ldots,y_n] / I
    \end{equation*} where $\deg(x_k) = 2$, $\deg(y_k) = 2(m-1)$, and $I$ is the homogeneous ideal generated by the following types of elements: \begin{enumerate}
        \item $y_k\,y_{k'}$ for all $k\neq k'$.
        \item $x_n\,y_k$ for all $k$.
        \item $y_k\,\prod_{\ell=1}^{n-m}(-x_{\ell}) - \prod_{\ell=1}^{n-1}(-x_{\ell})$ for all $k$.
        \item $\sum_{k=1}^{n} y_k - \prod_{\ell=n-m+1}^{n-1} (x_n-x_{\ell})$. 
        \item The $i$-th elementary symmetric polynomial $e_i(x_1,\ldots, x_n)$ for $1\leq i\leq n$. 
    \end{enumerate}
\end{theorem}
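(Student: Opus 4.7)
The plan is to mirror AHM's argument for $h = (h(1),n,\ldots,n)$ and then transport the conclusion along the substitution $x_i \leftrightarrow x_{n+1-i}$ highlighted right after Lemma \ref{lem:Equiv_properties}. First I would define a graded ring homomorphism $\Phi: \Z[x_1,\ldots,x_n,y_1,\ldots,y_n] \to H^*(\Hess(S,h'))$ sending each indeterminate to the class of the same name from Definition \ref{def:TransposeClasses}. By the generation proposition stated just above the theorem, the classes $x_k, y_k, t_k$ generate $H^*_T(\Hess(S,h'))$ as a $\Z$-algebra, and since $t_1,\ldots,t_n$ vanish in the passage $H^*_T \to H^*$, the map $\Phi$ is surjective onto $H^*(\Hess(S,h'))$.

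Next I would check that every generator of $I$ lies in $\ker \Phi$, so that $\Phi$ descends to a graded surjection $\overline{\Phi}: \Z[x,y]/I \twoheadrightarrow H^*(\Hess(S,h'))$. Relations (1)--(4) are exactly the identities of Lemma \ref{lem:Equiv_properties}. For relation (5), each $x_k$ in $H^*_T$ evaluates to $t_{w(k)}$ at vertex $w$, so $e_i(x_1,\ldots,x_n)$ equals the constant tuple $e_i(t_1,\ldots,t_n)$, which lies in $\langle t_1,\ldots,t_n\rangle$ and hence vanishes in $H^*(\Hess(S,h'))$.

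The main obstacle is injectivity of $\overline{\Phi}$. My plan is to exploit the graded ring automorphism $\sigma$ of $\Z[x_1,\ldots,x_n,y_1,\ldots,y_n]$ defined by $x_i \mapsto x_{n+1-i}$ and $y_k \mapsto y_k$. A term-by-term comparison of AHM's ideal $I_{AHM}$ for $h = (m,n,\ldots,n)$ with our ideal $I$ (using that each $e_i$ is $\sigma$-invariant, and that the re-indexing $\ell \mapsto n+1-\ell$ carries $\{m+1,\ldots,n\}$ to $\{1,\ldots,n-m\}$ and $\{2,\ldots,m\}$ to $\{n-m+1,\ldots,n-1\}$) shows that $\sigma(I_{AHM}) = I$. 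Therefore $\Z[x,y]/I \cong \Z[x,y]/I_{AHM}$ as graded rings, and by Proposition \ref{thm:AHM4.3} the latter is isomorphic to $H^*(\Hess(S,h))$. By the corollary to Proposition \ref{prop:Transpose}, $\Hess(S,h) \cong \Hess(S,h')$, so $H^*(\Hess(S,h)) \cong H^*(\Hess(S,h'))$ as graded $\Z$-modules. Thus the source and target of $\overline{\Phi}$ are free $\Z$-modules of equal finite rank in each degree, and a graded surjection between such modules is automatically an isomorphism.

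A self-contained alternative, in case one prefers not to invoke the varietal isomorphism, is to mimic AHM's basis reduction directly using (1)--(5): relations (1)--(4) let one rewrite any $y$-involving monomial in the form $x_n^{\ell_1} x_{n-1}^{\ell_2} \cdots x_2^{\ell_{n-1}} y_k$ with the exponent bounds from Theorem \ref{thm:E}, and the Schubert-style reduction mod the elementary symmetric polynomials in (5) trims pure $x$-monomials down to the first list in Theorem \ref{thm:E}. Counting these monomials and matching against the Poincar\'e polynomial of $\Hess(S,h')$ (equal to that of $\Hess(S,h)$ by \cite{DePrSh}) gives a spanning set of the correct size and hence the isomorphism. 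The $\sigma$-transport route is cleaner because it reuses the enumeration already carried out in \cite{AHM}.
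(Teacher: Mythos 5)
Your proposal is correct and follows essentially the same route as the paper, which justifies this theorem only by the remark that the relations of Lemma \ref{lem:Equiv_properties} coincide with those of Lemma \ref{lem:AHM4.1} under the substitution $x_i \leftrightarrow x_{n+1-i}$ (with $m = h(1)$) and then transports Proposition \ref{thm:AHM4.3} along that isomorphism of polynomial rings. Your write-up supplies the details the paper leaves implicit---the surjection $\Phi$, the verification that the generators of $I$ lie in its kernel, the check that $\sigma(I_{AHM}) = I$, and the rank comparison via $\Hess(S,h) \cong \Hess(S,h')$ to upgrade the surjection to an isomorphism---all of which are sound.
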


\begin{remark}
    If $h'=((n-1)^{n-m},n^m)$, then following two types of monomials form a $\Z$-basis of $H^{*}(\Hess(S,h'))$:
    \begin{equation}\label{eq:KJS1}
    x_1^{i_1}x_2^{i_2}\cdots x_{n}^{i_n} \,\,\,\,  \mbox{ which does not contain the factor } \prod_{\ell=n-m+1}^{n} x_{\ell}
    \end{equation} \vspace{-0.5cm}
    \begin{equation}\label{eq:KJS2}
    x_{n-1}^{\ell_1}x_{n-2}^{\ell_2}\cdots x_{1}^{\ell_{n-1}}\,y_k \,\,\,\,  \mbox{ which does not contain the factor } \prod_{\ell=1}^{n-m} x_{\ell}
\end{equation}
running over all $0\leq i_j\leq j-1$ in the first equation, and over all $0\leq \ell_{j}\leq j-1$ and $1\leq k\leq n-1$ in the second equation. 
\end{remark}

\section{Higher Specht Basis for $H^*(\Hess(S,h))$} \label{sec:higherspecht}

In this section, we give a new basis for $H^*(\Hess(S,h))$ when  $S$ is regular semisimple and $h=(h(1),n,\ldots, n)$,  and show that it is a higher Specht basis. Recall the set of monomial basis elements of $H^{*}(\Hess(S,h))$ defined for $h=(h(1),n,\ldots,n)$ in the previous section:
\begin{equation}\label{eq:AHM1}
    x_1^{i_1}x_2^{i_2}\cdots x_n^{i_n} \,\,\,\,  \mbox{ which does not contain the factor } \prod_{\ell=1}^{h(1)} x_{\ell}
\end{equation} \vspace{-0.5cm}
\begin{equation}\label{eq:AHM2}
    x_n^{\ell_1}x_{n-1}^{\ell_2}\cdots x_2^{\ell_{n-1}} y_k \,\,\,\, \mbox{ which does not contain the factor } \prod_{\ell=h(1)+1}^{n} x_{\ell} 
\end{equation}
running over all $0\leq i_j \leq n-j$ in the first equation, and over all $0\leq \ell_j\leq n-1-j$, and $1\leq k\leq n-1$ in the second equation.

The action of $\Sn$ on the above polynomials is defined by fixing each of the $x_i$, and permuting the $y_i$, that is, for $w\in \Sn$, we have $w\cdot x_i = x_i$ and $w\cdot y_i = y_{w(i)}$. This group action gives a representation of $\Sn$, and it has been shown that this representation decomposes into the direct sum of trivial representations and standard representations. 

Define $B_1$ to be the set of monomials in equation (\ref{eq:AHM1}) above, and $B_2$ to be the set of monomials in equation (\ref{eq:AHM2}) above. We construct an alternate basis of $H^{*}(\Hess(S,h))$ motivated by the basis elements of Specht modules. Let $B_3$ be the set of monomials of the form 
\begin{equation}\label{eq:AHM3}
    x_n^{\ell_1}x_{n-1}^{\ell_2}\cdots x_2^{\ell_{n-1}}(y_{k+1}-y_1)\,\,\,\, \mbox{ which does not contain the factor } \prod_{\ell=h(1)+1}^{n} x_{\ell}
\end{equation}
running over all $0\leq \ell_j \leq n-1-j$ and $1\leq k\leq n-1$. 

From Proposition \ref{thm:AHM4.5}, the set $B_1\cup B_2$ forms a basis of $H^{*}(\Hess(S,h))$ when $h=(h(1),n,\ldots, n)$. We claim that $B_1\cup B_3$ also forms a basis. 

\begin{lemma}\label{lem:newbasislem1}
Let $f(x_1,\ldots, x_n)$ be homogeneous in this presentation of $H^{*}(\Hess(S,h))$. Then $f(x_1,\ldots, x_n)$ can be expressed only in terms of the elements of $B_1$.  
\end{lemma}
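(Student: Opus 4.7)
The plan is to derive an auxiliary relation in $H^*(\Hess(S,h))$ involving only the $x$-variables, and then use it together with the elementary symmetric relations of Proposition \ref{thm:AHM4.3}(5) to reduce any polynomial in $x_1, \ldots, x_n$ to a $\mathbb{Z}$-linear combination of elements of $B_1$.

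First I would combine relations (2) and (4) of Proposition \ref{thm:AHM4.3}: multiplying (4) by $x_1$ and applying $x_1 y_k = 0$ to each summand yields $x_1 \prod_{\ell=2}^{h(1)}(x_1 - x_\ell) = 0$ in the quotient. Expanding this product and isolating the term $(-1)^{h(1)-1} x_1 x_2 \cdots x_{h(1)}$ rearranges to
\[ x_1 x_2 \cdots x_{h(1)} = (-1)^{h(1)} \sum_{S \subsetneq \{2,\ldots,h(1)\}} (-1)^{|S|} \, x_1^{h(1)-|S|} \prod_{\ell \in S} x_\ell, \]
in which every monomial on the right has $x_1$-exponent at least $2$. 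I would also record that $x_1^n = 0$ in the quotient, obtained by substituting $T = x_1$ into $\prod_{j=1}^n (T - x_j) = T^n - e_1 T^{n-1} + \cdots$ and using $e_i = 0$ for all $i$. Consequently, any monomial with $x_1$-exponent at least $n$ vanishes.

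For a given homogeneous $f(x_1, \ldots, x_n)$, I would first reduce modulo the elementary symmetric polynomials to write $f$ as a $\mathbb{Z}$-linear combination of Artin monomials $x_1^{i_1} \cdots x_n^{i_n}$ with $0 \le i_j \le n-j$. Each such monomial is either in $B_1$ or is \emph{bad}, meaning it contains the factor $\prod_{\ell=1}^{h(1)} x_\ell$. I would then show by downward induction on the $x_1$-exponent $i_1$ that every bad Artin monomial lies in $\text{span}(B_1)$. For the base case $i_1 = n-1$, applying the substitution above to $m = x_1 x_2 \cdots x_{h(1)} \cdot g$ produces terms each with $x_1$-exponent at least $(n-1) - 1 + 2 = n$, so $m = 0$. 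For the inductive step, the substitution produces terms with $x_1$-exponent strictly greater than $i_1$ and with every other $x_\ell$-exponent unchanged or decreased by one—so the Artin bounds $i_\ell \le n - \ell$ for $\ell \ge 2$ are preserved. Each resulting term either vanishes (when its $x_1$-exponent reaches $n$), lies in $B_1$ (when some $x_\ell$ for $\ell \in \{2, \ldots, h(1)\}$ drops to exponent $0$), or is again bad with strictly larger $i_1$, to which the inductive hypothesis applies.

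The main obstacle is termination of this iterative reduction. The crucial structural feature that lets the downward induction on $i_1$ close is that the substitution from the derived relation only raises $x_1$-exponents while weakly decreasing all other $x_\ell$-exponents; combined with $x_1^n = 0$, this guarantees that one never needs to re-enter the Artin reduction for variables $x_2, \ldots, x_n$ and so cannot cycle. Putting these pieces together shows that every polynomial in the $x$-variables can be expressed as a $\mathbb{Z}$-linear combination of $B_1$ elements, establishing the lemma.
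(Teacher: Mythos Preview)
Your argument is correct and takes a genuinely different route from the paper's. The paper exploits the $\Sn$-action: since $\Sn$ fixes each $x_i$ and permutes the $y_k$, any polynomial $f(x_1,\ldots,x_n)$ is $\Sn$-invariant; writing $f$ in the basis $B_1\cup B_2$ and comparing with $w\cdot f$ forces the $B_2$-coefficients to satisfy constraints that contradict linear independence unless they all vanish. Your approach is instead a direct algebraic reduction: you extract the relation $x_1\prod_{\ell=2}^{h(1)}(x_1-x_\ell)=0$ from relations (2) and (4), and combine it with $x_1^n=0$ to iteratively rewrite any bad Artin monomial, pushing the $x_1$-exponent upward until it either vanishes or loses the forbidden factor. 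The paper's argument is shorter and fits naturally with the representation-theoretic context of the section, while your argument is more constructive---it actually produces the $B_1$-expansion---and avoids invoking the $\Sn$-action, which makes it self-contained at the level of the ring presentation. Both are valid; your version has the advantage that the key relation $x_1\prod_{\ell=2}^{h(1)}(x_1-x_\ell)=0$ is of independent interest and clarifies why the forbidden factor $\prod_{\ell=1}^{h(1)}x_\ell$ is the right one.
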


\begin{proof}
Since $B_1\cup B_2$ forms a basis of $H^{*}(\Hess(S,h))$, we can write 
$$ f(x_1,\ldots, x_n) = \sum_{B_1} b_{\underline{i}} x_1^{i_1}\cdots x_n^{i_n} + \sum_{B_2} c_{\underline{\ell},k} x_n^{\ell_1}\cdots x_2^{\ell_{n-1}} y_k $$ 
for some constants $b_{\underline{i}}$ and $c_{\underline{\ell},k}$ ranging over the lists of exponents $\underline{i}$ and $\underline{\ell}$ of elements of $B_1\cup B_2$, and with $1\leq k\leq n-1$. From the dot action of $\Sn$ on $H^{*}(\Hess(S,h))$, we know that for $w\in \Sn$, $w\cdot  x_i = x_i$ and $w\cdot y_i = y_{w(i)}$ for all $i$. So, since $f(x_1,\ldots, x_n)$ is fixed by the action of $\Sn$, we get that $c_{\underline{\ell},k} = c_{\underline{\ell},k'}$ for all $k\neq k'$. Define $c_{\underline{\ell}} := c_{\underline{\ell},k}$. 

If $c_{\underline{\ell}}\neq 0$, then for $w\in \Sn$, $w$ fixes $f(x_1,\ldots x_n)$, and fixes $\sum_{B_1} b_{\underline{i}} x_1^{i_1}\cdots x_{n}^{i_n}$, $w$ must also fix $\sum_{B_2} c_{\underline{\ell}} x_n^{\ell_1}\cdots x_2^{\ell_{n-1}}$.  Hence we can use the transposition $(i,j) \in \Sn$ to find: 
\begin{equation*}
    \begin{split}
        \sum_{k\neq i} c_{\underline{\ell}} x_n^{\ell_1}\cdots x_2^{\ell_{n-1}} y_k &= \sum_{k\neq j} c_{\underline{\ell}} x_n^{\ell_1}\cdots x_2^{\ell_{n-1}} y_k \\ 
        \sum_{k\neq i} c_{\underline{\ell}} x_n^{\ell_1}\cdots x_2^{\ell_{n-1}} y_k - \sum_{k\neq i,j} c_{\underline{\ell}} x_n^{\ell_1}\cdots x_2^{\ell_{n-1}} y_k &= \sum_{k\neq j} c_{\underline{\ell}} x_n^{\ell_1}\cdots x_2^{\ell_{n-1}} y_k - \sum_{k\neq i,j} c_{\underline{\ell}} x_n^{\ell_1}\cdots x_2^{\ell_{n-1}} y_k \\ 
        x_{n}^{\ell_1}\cdots x_{2}^{\ell_{n-1}} y_j &= x_{n}^{\ell_1}\cdots x_{2}^{\ell_{n-1}} y_i
    \end{split}
\end{equation*}
However, this is a contradiction, since these are distinct basis elements in $B_1\cup B_2$, so they are linearly independent. Hence $c_{\ell} = 0$, so $f(x_1,\ldots, x_n)$ can be expressed solely using basis elements from $B_1$.
\end{proof}

We now show that $B_1$ and $B_3$ together form a basis for $H^{*}(\Hess(S,h))$.

\begin{theoremA}
The following sets form a basis of $H^{*}(\Hess(S,h))$ when $h=(h(1),n,\ldots,n)$: 
\begin{equation*}\label{eq:newAHMbasis}
    \begin{split}
    x_1^{i_1}x_2^{i_2}\cdots x_n^{i_n} \,\,\,\, &  \mbox{ which does not contain the factor } \prod_{\ell=1}^{h(1)} x_{\ell}  \\
    x_n^{\ell_1}x_{n-1}^{\ell_2}\cdots x_2^{\ell_{n-1}} (y_{k+1}-y_1) \,\,\,\, & \mbox{ which does not contain the factor } \prod_{\ell=h(1)+1}^{n} x_{\ell} 
    \end{split}
\end{equation*}
running over all $0\leq i_j \leq n-j$ in the first equation, and over all $0\leq \ell_j\leq n-1-j$, and $1\leq k\leq n-1$ in the second equation. 
\end{theoremA}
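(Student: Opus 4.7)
The plan is to show that $B_1 \cup B_3$ spans $H^{*}(\Hess(S,h))$ and has the same cardinality as the known basis $B_1 \cup B_2$ from Proposition \ref{thm:AHM4.5}, so it must also be a basis. The sets $B_2$ and $B_3$ are indexed by the same data $(\underline{\ell}, k)$ with $0 \leq \ell_j \leq n-1-j$, $1 \leq k \leq n-1$, subject to the same factor constraint on $M := x_n^{\ell_1}\cdots x_2^{\ell_{n-1}}$, giving $|B_1 \cup B_3| = |B_1 \cup B_2| = \dim H^{*}(\Hess(S,h))$. Thus it suffices to show that every element of $B_2$ lies in $\mathrm{span}(B_1 \cup B_3)$.

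Fix a valid $M = x_n^{\ell_1}\cdots x_2^{\ell_{n-1}}$ and set $P := \prod_{\ell=2}^{h(1)}(x_1-x_\ell)$. By Lemma \ref{lem:AHM4.1}(4), $\sum_{k=1}^n y_k = P$, so $\sum_{k=1}^n M y_k = MP$. Summing the $B_3$ elements associated to this $M$,
$$\sum_{k=1}^{n-1} M(y_{k+1}-y_1) \;=\; \sum_{j=2}^{n} M y_j \;-\; (n-1)\,M y_1 \;=\; MP - n\, M y_1,$$
and hence
$$M y_1 \;=\; \tfrac{1}{n}\!\left(MP \;-\; \sum_{k=1}^{n-1} M(y_{k+1}-y_1)\right).$$
By Lemma \ref{lem:newbasislem1}, the polynomial $MP$ lies in $\mathrm{span}(B_1)$, so $M y_1 \in \mathrm{span}(B_1 \cup B_3)$. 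For $2 \leq k \leq n-1$, write $M y_k = M(y_k - y_1) + M y_1$, where $M(y_k - y_1)$ is the element of $B_3$ indexed by $k-1 \in \{1,\ldots,n-2\}$, and the second summand was just handled; so $M y_k \in \mathrm{span}(B_1 \cup B_3)$. This establishes $B_2 \subset \mathrm{span}(B_1 \cup B_3)$, completing the spanning argument.

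The only subtle point is the factor $\tfrac{1}{n}$ that appears when solving for $M y_1$, so the argument presumes that $n$ is invertible in the coefficient ring; this is consistent with the $H^{*}(\cdot;\C)$ setting used elsewhere in the paper for Tymoczko's $\Sn$-action. Beyond that, the proof is essentially an invertible triangular change of basis between $\{M y_k\}_{k=1}^{n-1}$ and $\{M(y_{k+1}-y_1)\}_{k=1}^{n-1}$ modulo $\mathrm{span}(B_1)$, and no further reduction inside the ideal $I$ is needed beyond what Lemmas \ref{lem:AHM4.1}(4) and \ref{lem:newbasislem1} already supply.
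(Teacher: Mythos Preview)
Your proof is correct and follows essentially the same approach as the paper: both use relation (4) from Proposition \ref{thm:AHM4.3} together with Lemma \ref{lem:newbasislem1} to pass between $B_1\cup B_2$ and $B_1\cup B_3$, the paper by writing out the transition matrix and row-reducing, you by a direct spanning-plus-counting argument. Your remark about the factor $\tfrac{1}{n}$ is apt and in fact applies equally to the paper's proof---its row reduction leaves a diagonal entry of $-n$ on the transition matrix---so in either presentation the result is a basis over $\Q$ or $\C$ rather than a $\Z$-basis.
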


\begin{proof}
Note that $B_1\cup B_2$ form the basis of $H^{*}(\Hess(S,h))$ given in Proposition \ref{thm:AHM4.5}, and our desired basis is $B_1\cup B_3$. Consider the degree-lexicographic ordering on the monomials given by $x_1<x_2<\ldots<x_n<y_2<\ldots<y_n<y_1$, and order the elements of $B_1,B_2$, and $B_3$ correspondingly, with elements of $B_3$ ordered by their initial monomial. We will form the transition matrix $M$ from $B_1\cup B_2$ to $B_1\cup B_3$, and show that it is invertible. Note that since $H^{*}(\Hess(S,h))$ is graded with $\deg(x_i) = 2$ and $\deg(y_i) = 2(h(1)-1)$ for all $i$, the transition matrix is block diagonal, with blocks $M_d$ according to degree $d$. The columns of $M_d$ correspond to ways of writing one element of $B_1\cup B_3$ of degree $d$ in terms of the basis elements from $B_1\cup B_2$. 

For $M_0$, there are two cases. If $h(1)>1$, then $M_0 =[ \begin{array}{c} 1  \end{array}]$, which is invertible. If $h(1)=1$, then $B_2$ contains elements $y_1,\ldots, y_{n-1}$ of degree $0$, and $B_3$ contains elements $y_2-y_1,\ldots y_n-y_1$ of degree $0$. For the ordering given above, the transition matrix is given in Figure \ref{fig:smallmatrix}. This matrix is invertible, since we can use row operations to subtract add rows $2$ through $n-1$ to the last row to get an upper-triangular matrix with nonzero entries on the diagonal.

\begin{figure}
    $$ M_0 = \left[\begin{array}{cccccc}
    1 & 0 & 0 & \cdots & 0 & 1  \\
    0 & 1 & 0 & \cdots & 0 & -1 \\
    0 & 0 & 1 & \cdots & 0 & -1 \\
    \vdots & \vdots & \vdots & \ddots & \vdots & \vdots \\
    0 & 0 & 0 & \cdots & 1 & -1 \\
    0 & -1 & -1 & \cdots & -1 & -2 \\
    \end{array} \right] $$ 
    \caption{The transition matrix between $B_1\cup B_2$ and $B_1\cup B_3$ for elements of degree $0$ when $h(1)=1$.}
    \label{fig:smallmatrix}
\end{figure}

When $d>0$, consider the transition matrix $M_d$. Columns corresponding to an element of $B_1$ will have a $1$ on the diagonal, and $0$'s elsewhere, since these basis elements exist in both sets. For $k<n$, we can write $x_n^{\ell_1}\cdots x_{2}^{\ell_{n-1}} (y_k-y_1) = x_n^{\ell_1}\cdots x_2^{\ell_{n-1}}y_k - x_n^{\ell_1}\cdots x_2^{\ell_{n-1}}y_1$, so columns corresponding to elements in $B_3$ of this form will have a $1$ on the diagonal, corresponding to the row for $x_n^{\ell_1}\cdots x_2^{\ell_{n-1}} y_k$, a $(-1)$ below the diagonal, corresponding to the row for $x_n^{\ell_1}\cdots x_2^{\ell_{n-1}} y_1$, and $0$'s elsewhere.

For elements in $B_3$ of the form $x_n^{\ell_1}\cdots x_2^{\ell_{n-1}} (y_n-y_1)$, we use relation $(4)$ from Proposition \ref{thm:AHM4.3} to rewrite: 
\begin{equation*} x_n^{\ell_1}\cdots x_2^{\ell_{n-1}} (y_n-y_1) \end{equation*}
\begin{equation*}
    = x_n^{\ell_1}\cdots x_2^{\ell_{n-1}} \left(-2y_1-y_2-\cdots - y_{n-1}\right) + x_n^{\ell_1}\cdots x_2^{\ell_{n-1}} \left( \prod_{m=2}^{h(1)} (x_1-x_m)\right) 
\end{equation*}

The first term of this expansion is clearly a sum of basis elements from $B_2$. The second term is a polynomial containing only the variables $x_1,\ldots, x_n$, so by Lemma \ref{lem:newbasislem1}, we know that it can be written as a sum of basis elements from $B_1$. So we have
\begin{equation*}
    \begin{split}
        & x_n^{\ell_1}\cdots x_2^{\ell_{n-1}} (y_n-y_1) \\
        & = x_n^{\ell_1}\cdots x_2^{\ell_{n-1}} \left(-2y_1-y_2-\cdots - y_{n-1}\right) + x_n^{\ell_1}\cdots x_2^{\ell_{n-1}} \left( \prod_{m=2}^{h(1)} (x_1-x_m)\right) \\
        & = x_n^{\ell_1}\cdots x_2^{\ell_{n-1}} \left(-2y_1-y_2-\cdots - y_{n-1}\right) + \sum_{B_1} b_{\underline{i}} x_1^{i_1}\cdots x_{n}^{i_n} \\
    \end{split}
\end{equation*}

Thus the column of $M_d$ corresponding to $x_{n}^{\ell_1}\cdots x_{n}^{\ell_{n-1}}(y_n - y_1)$ has a $b_{\underline{i}}$ in rows corresponding to elements from $B_1$, a $(-1)$ in the rows corresponding to $x_n^{\ell_1}\cdots x_{2}^{\ell_{n-1}} y_{k}$ for $k<n$ and a $(-2)$ in the row for $x_{n}^{\ell_1}\cdots x_{2}^{\ell_{n-1}}y_1$. So the matrix has the form shown in Figure \ref{fig:transition_matrix}. Then, using row operations, adding the row for the basis element $x_{n}^{\ell_1}\cdots x_{2}^{\ell_{n-1}} y_k$ to the row for the basis element $x_{n}^{\ell_1}\cdots x_{2}^{\ell_{n-1}} y_1$ clears the entries below the diagonal, and maintains that the diagonal is nonzero, because of the formula found above for expressing $x_{n}^{\ell_1}\cdots x_{2}^{\ell_{n-1}} (y_n-y_1)$ in terms of basis elements from $B_1\cup B_2$, namely that the coefficient for $x_{n}^{\ell_1}\cdots x_{2}^{\ell_{n-1}} y_k$ is $-1$ when $2\leq k \leq n-1$. 

Therefore we can use elementary row operations to make $M_d$ an upper-triangular matrix with nonzero entries on the diagonal. Hence each $M_d$ is invertible, so the transition matrix from $B_1\cup B_2$ to $B_1\cup B_3$ is invertible, and so $B_1\cup B_3$ forms a basis of $H^{*}(\Hess(S,h))$.
\end{proof}

\begin{figure}
    $$\left[\begin{array}{cccccccccccc}
    1 & 0 & \cdots & 0 & 0 & 0 & \cdots & b_{\underline{i}^{(1)}} & \cdots & 0 & \cdots & b_{\underline{j}^{(1)}}  \\
    0 & 1 & \cdots & 0 & 0 & 0 & \cdots & b_{\underline{i}^{(2)}} & \cdots & 0 & \cdots & b_{\underline{j}^{(2)}} \\
    \vdots & \vdots  & \ddots & \vdots & \vdots & \vdots &  & \vdots &  & \vdots &  & \vdots \\
    0 & 0 & \cdots & 1 & 0 & 0 & \cdots & b_{\underline{i}^{(k)}} & \cdots & 0 & \cdots & b_{\underline{j}^{(k)}} \\
    0 & 0 & \cdots & 0 & 1 & 0 & \cdots & -1 & \cdots & 0 & \cdots & 0 \\
    0 & 0 & \cdots & 0 & 0 & 1 & \cdots & -1 & \cdots & 0 & \cdots & 0 \\
    \vdots & \vdots &  & \vdots & \vdots & \vdots & \ddots & \vdots &  & \vdots &  & \vdots \\
    0 & 0 & \cdots & 0 & -1 & -1 & \cdots & -2 & \cdots & 0 & \cdots & 0 \\
    \vdots & \vdots &  & \vdots & \vdots & \vdots &  & \vdots &  & \vdots &  & \vdots \\
    0 & 0 & \cdots & 0 & 0 & 0 & \cdots & 0 & \cdots & 1 & \cdots & -1 \\
    \vdots & \vdots &  & \vdots & \vdots & \vdots &  & \vdots &  & \vdots & \ddots & \vdots \\
    0 & 0 & \cdots & 0 & 0 & 0 & \cdots & 0 & \cdots & -1 & \cdots & -2 \\
\end{array}\right] $$
    \caption{The general form of the transition matrix between $B_1\cup B_2$ and $B_1\cup B_3$.}
    \label{fig:transition_matrix}
\end{figure}

By counting the monomials in each set, we can use these basis elements to understand the multiplicity of irreducible representations in the decomposition of $H^*(\Hess(S,h))$. 

\begin{proposition}\label{cohomdecomp}
When $h=(h(1),n,\ldots,n)$, the dot action of $\Sn$ on $H^{*}(\Hess(S,h))$ decomposes into $h(1)(n-1)!$ copies of the trivial representation $V_{(n)}$ and $(n-h(1))(n-2)!$ copies of the standard representation $V_{(n-1,1)}$.
\end{proposition}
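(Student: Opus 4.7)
The plan is to use Theorem A to directly read off the decomposition by partitioning the basis $B_1 \cup B_3$ into $\Sn$-stable subspaces and identifying each piece as a trivial or standard representation, then carefully count.

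First I would note that the action fixes each $x_i$ and sends $y_i \mapsto y_{w(i)}$. Consequently every element of $B_1$ is $\Sn$-fixed, so the span of $B_1$ is a direct sum of $|B_1|$ trivial representations. A direct enumeration gives $|B_1|$: the count of all monomials $x_1^{i_1}\cdots x_n^{i_n}$ with $0\le i_j\le n-j$ is $n!$, while those containing $\prod_{\ell=1}^{h(1)}x_\ell$ require $i_j\ge 1$ for $j\le h(1)$ and are counted by $(n-1)(n-2)\cdots(n-h(1))\cdot (n-h(1))!=(n-1)!(n-h(1))$. Subtracting gives $|B_1|=n!-(n-1)!(n-h(1))=h(1)(n-1)!$, matching the claimed multiplicity of $V_{(n)}$.

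Next I would organize $B_3$ by its $x$-part: for each valid monomial $M=x_n^{\ell_1}\cdots x_2^{\ell_{n-1}}$ (i.e.\ one not containing $\prod_{\ell=h(1)+1}^n x_\ell$), let $W_M:=\mathrm{span}\{M(y_{k+1}-y_1):1\le k\le n-1\}$. Since $M$ is $\Sn$-fixed and $w\cdot M(y_{k+1}-y_1)=M(y_{w(k+1)}-y_{w(1)})=M(y_{w(k+1)}-y_1)-M(y_{w(1)}-y_1)$, the subspace $W_M$ is $\Sn$-stable. By Theorem A its spanning set is linearly independent, so $\dim W_M=n-1$. The action is exactly the standard representation: under the map $M(y_i-y_j)\mapsto e_i-e_j$, we identify $W_M$ with the subspace of $\mathbb{C}^n$ of vectors summing to zero, which is the defining model of $V_{(n-1,1)}$. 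Thus each $W_M$ contributes one copy of $V_{(n-1,1)}$, and the $W_M$ together with the span of $B_1$ exhaust a basis, so they give a complete decomposition.

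Finally I would count the valid $x$-monomials $M$. Without the forbidden-factor condition there are $\prod_{j=1}^{n-1}(n-j)=(n-1)!$ monomials, while those containing $\prod_{\ell=h(1)+1}^n x_\ell$ are counted by $(n-2)(n-3)\cdots(h(1)-1)\cdot (h(1)-1)!=(n-2)!(h(1)-1)$. The difference is $(n-1)!-(n-2)!(h(1)-1)=(n-h(1))(n-2)!$, matching the claimed multiplicity of $V_{(n-1,1)}$.

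The main obstacle is step two, identifying $W_M$ as the standard representation rather than (say) the full permutation representation or some quotient: one must confirm that the relation $\sum_k y_k=\prod_{\ell=2}^{h(1)}(x_1-x_\ell)$ from Lemma \ref{lem:AHM4.1}(4) does not collapse the $(n-1)$-dimensional differences, which is precisely what Theorem A guarantees. Everything else reduces to the explicit enumeration above, so the content is really in the basis theorem having already been proved.
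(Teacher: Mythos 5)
Your proposal is correct and follows essentially the same route as the paper: both rely on the basis $B_1\cup B_3$ from Theorem \ref{thm:A}, observe that each element of $B_1$ spans a trivial representation, group the elements of $B_3$ by their common $x$-monomial into $(n-1)$-dimensional $\Sn$-stable subspaces isomorphic to $V_{(n-1,1)}$, and then count. Your inclusion--exclusion enumeration just makes explicit the counts that the paper asserts directly.
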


\begin{proof}
The dot action acts on the polynomial ring $\Z[x_1,\ldots,x_n, y_1,\ldots,y_n] / I \cong H^{*}(\Hess(S,h))$ as follows: For $w\in \Sn$, we have $w\cdot x_i = x_i$ and $w\cdot y_i = y_{w(i)}$. Since the set $B_1\cup B_3$ forms a basis, we can examine the dot action on these basis elements. 
By counting the possible strings of exponents on each type of monomial, we find that there are $h(1)(n-1)!$ basis elements in $B_1$ and $(n-h(1))(n-1)!$ basis elements from $B_3$. Each of the basis elements from $B_1$ is fixed by the dot action, so this representation includes $h(1)(n-1)!$ copies of the trivial representation.

Let $V_{(n-1,1)}$ be the Specht module for the standard representation, so $V_{(n-1,1)}$ has a basis of the form $\{y_k-y_1\}_{2\leq k\leq n}$. Consider a map $\varphi: B_3 \to V_{(n-1,1)}$  that maps a given polynomial as follows: $\varphi(x_{n}^{\ell_1}\cdots x_{2}^{\ell_{n-1}} (y_k-y_1)) = y_k-y_1$. We note that $\varphi$ preserves the action of $\Sn$, since for any $w\in \Sn$,
\begin{equation*}
    \begin{split}
    \varphi( w\cdot (x_{n}^{\ell_1}\cdots x_{2}^{\ell_{n-1}} (y_k-y_1)) &= \varphi((x_{n}^{\ell_1}\cdots x_{2}^{\ell_{n-1}} (y_{w(k)}-y_{w(1)}))  \\
    &= y_{w(k)}-y_{w(1)}  \\
    &= w\cdot (y_k-y_1) \\
    &= w\cdot \varphi (x_n^{\ell_1}\cdots x_{2}^{\ell_{n-1}} (y_k-y_1)) \\
    \end{split}
\end{equation*}

For a fixed list of exponents $\underline{\ell}$, the restriction of $\varphi$ to the elements of $B_3$ with a common factor $x_n^{\ell_1}\cdots x_2^{\ell_{n-1}}$ is a bijection between these polynomials and the basis $\{y_k-y_1\}_{2\leq k\leq n}$ of $V_{(n-1,1)}$. From above, we know that there are $(n-h(1))(n-1)!$ basis elements in $B_3$, so there are $\frac{(n-h(1))(n-1)!}{n-1} = (n-h(1))(n-2)!$ elements for each fixed exponent vector $\ell$. For each such exponent vector, the monomials $\{x_n^{\ell_1}\cdots x_2^{\ell_{n-1}}\,(y_k-y_1)\}_{2\leq k\leq n}$ span a submodule isomorphic to the Specht module $V_{(n-1)}$ for the standard representation. The monomials from $B_1$ each span a trivial representation, and there are $h(1)(n-1)!$ choices for exponent vector on each of these monomials. Hence $H^*(\Hess(S,h))$ decomposes into irreducibles as desired.
\end{proof}

In other words, we have succeeded in finding a higher Specht basis for the cohomology ring of $\Hess(S,h)$, where there is a natural projection from $H^{*}(\Hess(S,h))$ to $V_{(n)}$ and $V_{(n-1,1)}$. This mapping is given by $\pi_1(x_1^{i_1}\cdots x_{n}^{i_n}) = 1$ and $\pi_2(x_n^{\ell_1}\cdots x_{2}^{\ell_{n-1}} (y_k-y_1)) = y_k-y_1$.

By a similar argument to the above, we get a higher Specht basis for the cohomology ring of $\Hess(S, ((n-1)^{n-m},n^m)$, using the isomorphism which sends $x_i$ to $x_{n-i+1}$:

\begin{theoremE}
The following sets form a basis of $H^{*}(\Hess(S,h))$ when $h=((n-1)^{n-m},n^m)$:
\begin{equation*}\label{eq:transposeSpecht}
    \begin{split}
    x_1^{i_1}x_2^{i_2}\cdots x_n^{i_n} \,\,\,\, &  \mbox{ which does not contain the factor } \prod_{\ell=n-m+1}^{n} x_{\ell}  \\
    x_{n-1}^{\ell_1}x_{n-2}^{\ell_2}\cdots x_1^{\ell_{n-1}} (y_{k+1}-y_1) \,\,\,\, & \mbox{ which does not contain the factor } \prod_{\ell=1}^{n-m} x_{\ell} 
    \end{split}
\end{equation*} running over all $0\leq i_j\leq j-1$ in the first equation, and over all $0\leq \ell_j \leq j-1$ and $1\leq k\leq n-1$ in the second equation. 
\end{theoremE}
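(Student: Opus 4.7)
The plan is to deduce Theorem \ref{thm:E} from Theorem \ref{thm:A} via the polynomial ring isomorphism $x_i \leftrightarrow x_{n+1-i}$. As the author explicitly observes immediately after Lemma \ref{lem:Equiv_properties}, the defining relations for the classes $x_k, y_k$ in the transpose case are identical to those in Lemma \ref{lem:AHM4.1} for $h=(h(1),n,\ldots,n)$ after swapping $x_i \leftrightarrow x_{n+1-i}$ and setting $m = h(1)$. Since the dot action fixes each $x_i$ and permutes the $y_i$ in both presentations, this relabeling extends to an isomorphism of $\Sn$-modules between the two quotient rings.

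Under this identification, the set described in Theorem \ref{thm:E} translates precisely to the basis $B_1 \cup B_3$ of Theorem \ref{thm:A}. The exponent bound $0 \leq i_j \leq j-1$ on $x_j$ corresponds, after reindexing $j \mapsto n+1-j$, to the bound $0 \leq i_j \leq n-j$ used in Theorem \ref{thm:A}; and the forbidden factor $\prod_{\ell=n-m+1}^{n} x_\ell$ maps to $\prod_{\ell=1}^{m} x_\ell = \prod_{\ell=1}^{h(1)} x_\ell$. The second family of monomials translates analogously: $x_{n-1}^{\ell_1} \cdots x_1^{\ell_{n-1}}(y_{k+1}-y_1)$ corresponds to $x_2^{\ell_1} \cdots x_n^{\ell_{n-1}}(y_{k+1}-y_1)$, with the forbidden factor mapping in the parallel fashion. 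Consequently, the conclusion of Theorem \ref{thm:A} transfers directly.

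Alternatively, one can mirror the proof of Theorem \ref{thm:A} entirely within the transpose setting, which is useful for later bijective work in the paper. First, I would establish the transpose analog of Lemma \ref{lem:newbasislem1}: any element of $H^*(\Hess(S,h'))$ lying in $\Z[x_1,\ldots,x_n]$ can be expressed using only the monomials in equation (\ref{eq:KJS1}). The argument is identical, using that the dot action forces all coefficients on the $y$-containing basis elements (across the various indices $k$) to agree, and then invoking linear independence to conclude they vanish. Next, I would form the degree-graded transition matrix between the known basis (\ref{eq:KJS1})--(\ref{eq:KJS2}) and the proposed basis; the only nontrivial contribution comes from monomials $x_{n-1}^{\ell_1} \cdots x_1^{\ell_{n-1}}(y_n-y_1)$, which are rewritten using relation (4) of Lemma \ref{lem:Equiv_properties} to express $y_n$ in terms of the other $y_k$'s and a polynomial in the $x_i$'s, with the polynomial part then rewritten in $B_1'$ via the lemma just established.

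The main technical point to verify is invertibility of the transition matrix, but this follows by the same row operations as in Theorem \ref{thm:A}: adding the rows indexed by $x_{n-1}^{\ell_1} \cdots x_1^{\ell_{n-1}} y_k$ (for $2 \leq k \leq n-1$) to the row indexed by $x_{n-1}^{\ell_1} \cdots x_1^{\ell_{n-1}} y_1$ clears the entries below the diagonal while preserving nonzero diagonal entries. Because the algebraic structure and $\Sn$-action are isomorphic to the original case, no new obstruction arises; the decomposition into $m(n-1)!$ copies of the trivial representation and $(n-m)(n-2)!$ copies of the standard representation, analogous to Proposition \ref{cohomdecomp}, also transfers directly and confirms that the basis is indeed a higher Specht basis.
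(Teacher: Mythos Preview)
Your proposal is correct and matches the paper's approach exactly: the paper simply states that Theorem \ref{thm:E} follows ``by a similar argument to the above \ldots\ using the isomorphism which sends $x_i$ to $x_{n-i+1}$,'' and you have spelled out precisely how this transfer works, together with an optional direct reproof mirroring Theorem \ref{thm:A}. The only cosmetic point is that the exponent labels $\ell_j$ in the second family are reindexed between the two theorems (your image $x_2^{\ell_1}\cdots x_n^{\ell_{n-1}}$ is the same set of monomials as $B_3$, just with the $\ell$-subscripts reversed), but this does not affect the argument.
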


\section{Permutation Representations}\label{sec:permutationbasis}

In the previous section, we saw that for $h=(h(1),n,\ldots, n)$, the sets $B_1$ and $B_3$ formed a higher Specht basis of $H^{*}(\Hess(S,h))$. Each element of $B_1$ spans a trivial representation isomorphic to $V_{(n)}$, and each set of $n-1$ elements of $B_3$ with the same degree on each $x_i$ span a standard representation isomorphic to $V_{(n-1,1)}$. We now turn towards permutation representations, since these connect to the question of $e$-positivity for the chromatic symmetric function. 

\begin{proposition}
    Suppose $x_n^{\ell_1}x_{n-1}^{\ell_2}\cdots x_{2}^{\ell_{n-1}}$ does not contain the factor $\prod_{\ell=h(1)+1}^{n} x_{\ell}$, and for all $j$, $0\leq \ell_j\leq n-1-j$. Then the set $\{ x_n^{\ell_1}x_{n-1}^{\ell_2}\cdots x_{2}^{\ell_{n-1}}\,y_k \}_{1\leq k\leq n}$ forms a linearly independent set of elements of $H^{*}(\Hess(S,h))$.
\end{proposition}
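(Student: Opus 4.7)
The plan is to reduce the linear independence statement to a non-vanishing claim about a pure polynomial in the $x_i$'s, using the basis structure from Proposition \ref{thm:AHM4.5}, and then address that non-vanishing via a direct expansion together with the $\Sn$-decomposition of Proposition \ref{cohomdecomp}.

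For the reduction, I would first observe that the hypothesis on $\underline{\ell}$ matches exactly the defining conditions for $B_2$ in Proposition \ref{thm:AHM4.5}, so the $n-1$ elements $\{x_n^{\ell_1}\cdots x_2^{\ell_{n-1}} y_k\}_{k=1}^{n-1}$ are already a linearly independent subset of $B_2$. Suppose $\sum_{k=1}^{n} c_k\,(x^{\underline{\ell}} y_k)=0$. Using Lemma \ref{lem:AHM4.1}(4) to write $y_n = \prod_{\ell=2}^{h(1)}(x_1-x_\ell) - \sum_{k=1}^{n-1} y_k$, substitution yields
\begin{equation*}
    \sum_{k=1}^{n-1}(c_k - c_n)\, x^{\underline{\ell}} y_k \;+\; c_n\, x^{\underline{\ell}} \prod_{\ell=2}^{h(1)}(x_1-x_\ell) \;=\; 0.
\end{equation*}
The first sum is a combination of $B_2$-elements, while by Lemma \ref{lem:newbasislem1} the pure polynomial on the right lies in the span of $B_1$. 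Since $B_1\cup B_2$ is a basis, the two summands must vanish separately: the $B_2$-part forces $c_1=\cdots=c_{n-1}=c_n$, and the $B_1$-part forces $c_n\cdot x^{\underline{\ell}}\prod_{\ell=2}^{h(1)}(x_1-x_\ell) = 0$ in $H^*(\Hess(S,h))$.

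The argument therefore reduces to showing $x^{\underline{\ell}}\prod_{\ell=2}^{h(1)}(x_1-x_\ell) \neq 0$ in $H^*(\Hess(S,h))$, which is the main obstacle. My approach would be to expand the product and use the elementary symmetric polynomial relations of Proposition \ref{thm:AHM4.3}(5) to rewrite $x^{\underline{\ell}}\prod_{\ell=2}^{h(1)}(x_1-x_\ell)$ in the $B_1$-basis, then identify a surviving nonzero coefficient. A natural candidate monomial is $x_1^{h(1)-1}\cdot x^{\underline{\ell}}$, obtained by choosing $x_1$ from every factor of the product; the constraint $\ell_{n-1}\le n-1-(n-1)=0$ means $x_2$ does not appear in $x^{\underline{\ell}}$, so this monomial does not contain the forbidden factor $\prod_{\ell=1}^{h(1)} x_\ell$ of $B_1$ and provides a candidate $B_1$-contribution after further reductions. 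Representation-theoretically, the $\Sn$-submodule generated by $\{x^{\underline{\ell}} y_k\}_{k=1}^n$ under the dot action is a quotient of the natural permutation representation $V_{(n)} \oplus V_{(n-1,1)}$, whose standard summand is already spanned by the $n-1$ linearly independent $B_3$-basis elements $\{x^{\underline{\ell}}(y_{k+1}-y_1)\}_{k=1}^{n-1}$ from Theorem \ref{thm:A}; the non-vanishing of $x^{\underline{\ell}}\prod_{\ell=2}^{h(1)}(x_1-x_\ell)$ is equivalent to this submodule also containing the trivial summand $V_{(n)}$, hence realizing the full natural permutation representation of dimension $n$ and yielding the linear independence of all $n$ elements.
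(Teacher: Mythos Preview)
Your reduction to the non-vanishing of $x^{\underline{\ell}}\prod_{\ell=2}^{h(1)}(x_1-x_\ell)$ is correct and is essentially what the paper does (the paper phrases it as replacing $x^{\underline{\ell}}y_n$ by $x^{\underline{\ell}}\sum_k y_k$ and invoking relation~(4), which is equivalent to your substitution). The representation-theoretic paragraph at the end, however, is circular: the claim that the $\Sn$-submodule generated by $\{x^{\underline{\ell}}y_k\}$ contains the trivial summand $V_{(n)}$ \emph{is} the non-vanishing claim, so this restates the problem rather than resolving it.

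The genuine gap is in your proposed non-vanishing argument. Your candidate monomial $x_1^{h(1)-1}x^{\underline{\ell}}$ is \emph{not} in general an element of $B_1$: the $B_1$ condition requires the exponent on $x_j$ to be at most $n-j$, whereas the exponent on $x_j$ in $x^{\underline{\ell}}$ is $\ell_{n-j+1}$, which can be as large as $j-2$. For $j>(n+2)/2$ this exceeds $n-j$ (e.g.\ with $n=5$, $h(1)=2$, and $x^{\underline{\ell}}=x_5^3$, your candidate $x_1x_5^3$ violates the $B_1$ bound $i_5\le 0$). So ``further reductions'' via relation~(5) are not a cosmetic step but the entire content of the problem, and you have not indicated how to control them or why any $B_1$-coefficient survives.

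The paper circumvents this by passing to the coinvariant ring $R=\C[x_1,\dots,x_n]/(e_1,\dots,e_n)$ and using a \emph{permuted} Artin basis. Choosing $m\ge h(1)+1$ to be the largest index with zero exponent in $x^{\underline{\ell}}$, one checks directly that every monomial in the raw expansion of $x^{\underline{\ell}}\prod_{\ell=2}^{h(1)}(x_1-x_\ell)$ already satisfies the exponent bounds for the Artin basis permuted by the transposition $(1\,m)$: the exponent on $x_m$ is $0$, on $x_1$ is $h(1)-1\le m-1$, and on each other $x_j$ is at most $j-1$. Hence no reduction modulo $(e_1,\dots,e_n)$ is needed at all, and the expression is visibly nonzero in $R$, hence in $H^*(\Hess(S,h))$.
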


\begin{proof}
    If $h(1)=n$, then there are no such elements defined above, so we assume that $h(1)<n$. Notice that it is equivalent to show that the set $$\{ x_n^{\ell_1}x_{n-1}^{\ell_2}\cdots x_{2}^{\ell_{n-1}}\,y_k\}_{1\leq k\leq n-1} \cup \{x_n^{\ell_1}x_{n-1}^{\ell_2}\cdots x_{2}^{\ell_{n-1}}\,(y_1+\cdots + y_n)\}$$ is linearly independent, since the transition matrix between the given set and this set is invertible. Since elements of $B_2$ form distinct basis elements, we have that the set of $n-1$ elements on the left is linearly independent. Using relations $(1)$ and $(2)$ from Proposition \ref{thm:AHM4.3}, $H^{*}(\Hess(S,h))$ does not contain elements divisible by $y_ky_{\ell}$ for $k\neq \ell$, or elements divisible by $x_1y_k$ for any $k$. Under relation $(3)$, we have that $0 = \prod_{\ell=2}^{n}(-x_{\ell}) - y_k (\prod_{\ell=h(1)+1}^{n}(-x_{\ell}))$. Notice that $\prod_{\ell=h(1)+1}^{n}(-x_{\ell})$ divides this term, but does not divide $x_n^{\ell_1}x_{n-1}^{\ell_2}\cdots x_{2}^{\ell_{n-1}}$ by definition, so this relation does not factor into our calculation of linear independence.
    
    Using relation $(4)$, we have the following equivalence in $H^{*}(\Hess(S,h))$: \begin{equation}\label{eq:onlyxmono}
         x_n^{\ell_1}x_{n-1}^{\ell_2}\cdots x_{2}^{\ell_{n-1}}(y_1+\cdots + y_n) = x_n^{\ell_1}x_{n-1}^{\ell_2}\cdots x_{2}^{\ell_{n-1}}\left( \prod_{\ell=2}^{h(1)}(x_1-x_{\ell}) \right). \end{equation}
    Since relations $(1)-(3)$ each contain some $y_k$, it suffices to show that this term is nonzero modulo relation $(5)$. 

    Recall the coinvariant ring $H^{*}(\mathrm{Fl}(\C^n)) \cong  R = \C[x_1,\ldots, x_n]/I$ where $I$ is the ideal generated by the elementary symmetric functions $e_1,\ldots, e_n$. If the right-hand side of Equation \ref{eq:onlyxmono} is nonzero in $R$, then it is nonzero in $H^{*}(\Hess(S,h))$. The \textbf{Artin basis} of $R$ consists of monomials $\{x_1^{i_1}x_2^{i_2}\cdots x_n^{i_n}\,|\, 0\leq i_j\leq j-1\}$. Any permutation of the Artin basis $\{x_{\pi(1)}^{i_1}x_{\pi(2)}^{i_2}\cdots x_{\pi(n)}^{i_n}\,|\, 0\leq i_j\leq j-1\}_{\pi\in \Sn}$ also forms a basis of $R$.  

    Suppose that $$ x_n^{\ell_1}x_{n-1}^{\ell_2}\cdots x_{2}^{\ell_{n-1}}\left( \prod_{\ell=2}^{h(1)}(x_1-x_{\ell}) \right) = \sum c_k\, x_1^{k_1} x_2^{k_2} \cdots x_n^{k_n} $$ Let $m$ be the largest index such that the exponent on $x_m$ in $x_n^{\ell_1}x_{n-1}^{\ell_2}\cdots x_{2}^{\ell_{n-1}}$ is $0$, which exists since it does not contain a factor of $\prod_{\ell=h(1)+1}^{n} x_{\ell}$. In particular, $m\geq h(1)+1$. We show that each term of the sum above is an element of the permuted Artin basis for the permutation $\pi=(1m)$. 

    First, since $x_m$ does not divide $x_n^{\ell_1}x_{n-1}^{\ell_2}\cdots x_{2}^{\ell_{n-1}}$, and $m\geq h(1)+1$, we have that $k_m=0$. Next, for each $x_j$ with $2\leq j\leq h(1)$, the exponent $\ell_{n-j+1}$ satisfies $0\leq \ell_{n-j+1} \leq j-2$ by definition. Hence, the exponent $k_j$ satisfies $0\leq k_j \leq j-1$. For all $x_j$ with $h(1)+1\leq j\leq n$, other than $x_m$, we have $k_j = \ell_{n-j-1} \leq j-2$. Lastly, we have that the exponent on $x_1$ is $k_1 = h(1)-1 \leq m-2$. Thus each term of the sum is an element of this permuted Artin basis. 

    Therefore $x_n^{\ell_1}x_{n-1}^{\ell_2}\cdots x_{2}^{\ell_{n-1}}\left( \prod_{\ell=2}^{h(1)}(x_1-x_{\ell}) \right)$ is nonzero in $R$, and thus is also nonzero in the cohomology ring $H^{*}(\Hess(S,h))$. So, modulo the relations in the ideal $I$, the given set of elements of $H^{*}(\Hess(S,h))$ are linearly independent. Consequently, the set $\{ x_n^{\ell_1}x_{n-1}^{\ell_2}\cdots x_{2}^{\ell_{n-1}}\,y_k \}_{1\leq k\leq n}$ is linearly independent. 
\end{proof}

This new set behaves nicely with regard to the $\Sn$ action - since $\Sn$ fixes each of the $x_k$ variables, and permutes the $y_k$ variables in the natural way, this basis decomposes into a number of copies of the natural permutation representation of $\Sn$. 

\begin{corollary}
    The $\Sn$-module $H^*(\Hess(S,h))$ decomposes into $(n-h(1))(n-2)!$ copies of the natural permutation representation, and $n(h(1)-1)(n-2)!$ copies of the trivial representation. 
\end{corollary}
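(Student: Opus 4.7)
The plan is to combine the just-established linear independence of $\{x_n^{\ell_1}\cdots x_2^{\ell_{n-1}} y_k\}_{1\leq k\leq n}$ (for each admissible exponent vector $\underline{\ell}$) with the higher Specht decomposition of Proposition \ref{cohomdecomp}. First I would observe that, for each $\underline{\ell}$ with $0\leq \ell_j\leq n-1-j$ and whose monomial in the $x_i$'s avoids the factor $\prod_{\ell=h(1)+1}^n x_\ell$, the subspace $V_{\underline{\ell}} := \mathrm{span}_{\C}\{x_n^{\ell_1}\cdots x_2^{\ell_{n-1}}\,y_k\}_{1\leq k\leq n}$ is an $n$-dimensional $\Sn$-submodule of $H^*(\Hess(S,h))$, since the dot action fixes the $x_i$'s and sends $y_k$ to $y_{w(k)}$. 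Linear independence of its spanning set is the content of the preceding proposition, and the standard unit-vector basis makes $V_{\underline{\ell}}$ isomorphic to the natural permutation representation.

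Next, I would count and organize: the number of such exponent vectors equals $|B_3|/(n-1) = (n-h(1))(n-2)!$, matching the claimed number of permutation summands. For each $\underline{\ell}$, the standard sub-representation of $V_{\underline{\ell}}$ has basis $\{x_n^{\ell_1}\cdots x_2^{\ell_{n-1}}(y_{k+1}-y_1)\}_{1\leq k\leq n-1}$, which is precisely the corresponding portion of the higher Specht basis $B_3$ from Theorem \ref{thm:A}. Since the elements of $B_3$ are linearly independent, the various standard sub-representations of the $V_{\underline{\ell}}$ are linearly independent, and therefore the sum $\bigoplus_{\underline{\ell}} V_{\underline{\ell}}$ is direct: each $V_{\underline{\ell}}$ is recovered from its standard part together with the trivial vector $x_n^{\ell_1}\cdots x_2^{\ell_{n-1}}(y_1+\cdots+y_n)$, which is determined by $\underline{\ell}$ via relation $(4)$ of Proposition \ref{thm:AHM4.3}.

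To finish, I would invoke complete reducibility of $\Sn$-modules over $\C$ and write $H^*(\Hess(S,h)) = \bigoplus_{\underline{\ell}} V_{\underline{\ell}} \oplus W$ for some $\Sn$-stable complement $W$. By Proposition \ref{cohomdecomp} the total multiplicity of $V_{(n-1,1)}$ in $H^*(\Hess(S,h))$ is exactly $(n-h(1))(n-2)!$, and this is entirely exhausted by the standard parts of the $V_{\underline{\ell}}$; hence $W$ contains no copies of $V_{(n-1,1)}$, so $W$ is a direct sum of trivial representations. Its dimension is
\[
h(1)(n-1)! - (n-h(1))(n-2)! = (n-2)!\bigl(h(1)(n-1) - (n - h(1))\bigr) = n(h(1)-1)(n-2)!,
\]
giving the claimed trivial multiplicity.

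The main obstacle I anticipate is precisely the directness of the decomposition in the second paragraph: once one has a priori the independence of the standard components (guaranteed by Theorem \ref{thm:A}) and the rigidity of each $V_{\underline{\ell}}$ as an extension of its standard component by a uniquely determined trivial vector, the rest is arithmetic. A reader who wants a more concrete argument could instead exhibit an explicit basis by augmenting $(n-h(1))(n-2)!$ many sets $\{x_n^{\ell_1}\cdots x_2^{\ell_{n-1}}\,y_k\}_{k=1}^n$ with an appropriate subset of $B_1$ of size $n(h(1)-1)(n-2)!$ chosen to avoid the trivial vectors $x_n^{\ell_1}\cdots x_2^{\ell_{n-1}}\prod_{\ell=2}^{h(1)}(x_1-x_\ell)$, but the representation-theoretic argument above is shorter.
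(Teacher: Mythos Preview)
Your approach is essentially the same as the paper's: use the preceding proposition to exhibit, for each admissible $\underline{\ell}$, an $n$-dimensional $\Sn$-submodule $V_{\underline{\ell}}$ isomorphic to the natural permutation representation, count these submodules, and then argue that everything left over is trivial. The paper's proof is in fact terser than yours on exactly the point you flag as the ``main obstacle'': it simply asserts that ``the other basis elements of $H^*(\Hess(S,h))$ are fixed'' and does the subtraction $n! - n(n-h(1))(n-2)!$, without explicitly addressing whether the $V_{\underline{\ell}}$ form an internal direct sum.

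One comment on your directness argument: the sentence ``each $V_{\underline{\ell}}$ is recovered from its standard part together with the trivial vector \ldots\ which is determined by $\underline{\ell}$'' does not by itself show that the trivial vectors $x_n^{\ell_1}\cdots x_2^{\ell_{n-1}}(y_1+\cdots+y_n)$ are linearly independent across different $\underline{\ell}$, only that each one is nonzero. So the passage from ``the standard parts are independent'' to ``$\bigoplus_{\underline{\ell}} V_{\underline{\ell}}$ is direct'' is not fully justified. Fortunately this is inessential for the corollary as stated: since the statement is about the \emph{abstract} $\Sn$-module structure, it follows immediately from Proposition~\ref{cohomdecomp} by regrouping $(n-h(1))(n-2)!$ of the trivial summands with the $(n-h(1))(n-2)!$ standard summands to form natural permutation representations, leaving $h(1)(n-1)! - (n-h(1))(n-2)! = n(h(1)-1)(n-2)!$ trivials. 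Your final paragraph effectively does exactly this, so your proof is correct once you observe that the directness of $\bigoplus_{\underline{\ell}} V_{\underline{\ell}}$ is not actually needed.
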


\begin{proof}
    Since $\{x_n^{\ell_1}x_{n-1}^{\ell_{2}}\cdots x_2^{\ell_{n-1}}\,y_k\}_{1\leq k\leq n}$ forms a linearly independent set of polynomial elements of $H^{*}(\Hess(S,h))$, for a fixed list of exponents $(\ell_1,\ldots,\ell_{n-1})$, the submodule generated by these monomials yield a copy of the natural permutation representation. The number of such vectors $(\ell_1,\ldots, \ell_{n-1})$ such that $\ell_{i}=0$ for some $1\leq i\leq n-h(1)$, and $0\leq \ell_j \leq n-j-1$ is $(n-h(1))(n-2)!$. The other basis elements of $H^*(\Hess(S,h))$ are fixed, and there are $n! - n((n-h(1))(n-2)!) = n(h(1)-1)(n-2)!$ of these basis elements. 
\end{proof}

As discussed in Section \ref{sec:background}, the natural permutation representation is sent by the Frobenius character map to $h_{(n-1,1)}$, and the trivial representation is sent to $h_{(n)}$. So, applying the involution $\omega$, the corresponding chromatic symmetric function decomposes as $X_{G_h}(\xvars) = (n-h(1))(n-2)!\,e_{(n-1,1)} + n(h(1)-1)(n-2)!\,e_{(n)}$.  

\section{Tableaux Bijections}\label{sec:bijections}

In this section, we examine bijections between the monomial basis elements and sets of tableaux.

\subsection{The Regular Nilpotent Case}

We begin this section by looking at regular nilpotent Hessenberg varieties, rather than regular semisimple ones, since the cohomology rings in this case are understood for all Hessenberg functions. In this section, $N$ is a regular nilpotent matrix, so in Jordan canonical form, $N$ has a single Jordan block with eigenvalue $0$.

\begin{proposition}[\cite{HHMPT}, Corollary 7.3]\label{nilpotentbasis}
Let $N$ be a regular nilpotent matrix and let $h$ be any Hessenberg function of length $n$. Then the following set of monomials form an additive basis for $H^*(\textrm{Hess}(N,h))$: 
$$ \mathcal{N}_h := \left\{ x_1^{i_1}\cdots x_{n}^{i_n}\,|\,0\leq i_k \leq h(k)-k\textrm{ for } 1\leq k <n\right\}$$
\end{proposition}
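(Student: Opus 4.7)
The plan is to combine an explicit polynomial-ring presentation of $H^*(\Hess(N,h))$ with a Gr\"obner-basis argument and a Poincar\'e polynomial dimension check.

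First, I would invoke a presentation $H^*(\Hess(N,h)) \cong \Z[x_1,\ldots,x_n]/I_h$, where $I_h$ is generated by explicit polynomials $f_{h,1},\ldots,f_{h,n}$ (developed in \cite{HHMPT} and its antecedents, generalizing the $h=(h(1),n,\ldots,n)$ case stated in Proposition \ref{thm:AHM4.3}, and recovering the full flag variety presentation $\Z[x_1,\ldots,x_n]/(e_1,\ldots,e_n)$ when $h(j) = n$ for all $j$). The crucial structural property is that, under lexicographic order with $x_1 > x_2 > \cdots > x_n$, each generator $f_{h,j}$ has leading monomial exactly $x_j^{h(j)-j+1}$.

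Given this, the Gr\"obner-basis step is essentially automatic. Since the leading monomials $\{x_j^{h(j)-j+1}\}_{j=1}^n$ involve pairwise distinct variables, they are coprime, so Buchberger's criterion is satisfied trivially: all $S$-polynomials reduce to zero. Hence $\{f_{h,1},\ldots,f_{h,n}\}$ is a Gr\"obner basis of $I_h$, and the standard monomials---those not divisible by any $x_j^{h(j)-j+1}$---form a $\Z$-basis of the quotient. This standard monomial set is exactly $\mathcal{N}_h$.

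As a consistency check, I would verify the graded count $\sum_{m \in \mathcal{N}_h} q^{\deg(m)} = \prod_{k=1}^{n} [h(k)-k+1]_{q^2}$ under $\deg(x_k)=2$, and match this against the Poincar\'e polynomial of $\Hess(N,h)$ coming from Tymoczko's affine paving; this gives independent confirmation that the spanning set has the correct graded dimension and hence is a basis. The main obstacle is establishing the leading-monomial claim for each $f_{h,j}$: this requires explicitly writing out the generators (whose form involves sums of products such as $\sum_{k=1}^j \prod_i (x_k - x_i)$) and tracking which summand achieves the lex-maximal monomial. Once that combinatorial verification is in hand, the remainder of the argument is formal, and the coprime leading-term observation bypasses what would otherwise be a nontrivial $S$-polynomial computation.
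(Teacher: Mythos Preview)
The paper does not supply its own proof of this proposition: it is quoted verbatim from \cite{HHMPT} (Corollary 7.3) and used as a black box for the bijection in Theorem \ref{thm:D}. So there is no in-paper argument to compare your proposal against.

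Your sketch is a reasonable outline of how one would establish such a monomial basis, and the coprime-leading-term / Buchberger shortcut is indeed the standard mechanism. One caution on the leading-monomial claim: when you say the presentation ``recovers $\Z[x_1,\ldots,x_n]/(e_1,\ldots,e_n)$'' for $h\equiv n$, note that the elementary symmetric polynomials themselves do \emph{not} have lex-leading term $x_j^{n-j+1}$ (e.g.\ $e_j$ has leading term $x_1\cdots x_j$). What makes the Artin-basis argument work in that case is that the ideal also contains $h_{n-j+1}(x_j,\ldots,x_n)$, whose leading term is $x_j^{n-j+1}$. The generators $f_{h,j}$ in \cite{HHMPT} (and in Abe--Harada--Horiguchi--Masuda) are built precisely so that this phenomenon persists for general $h$; verifying the leading term is, as you say, the only nontrivial step, and you should be explicit that you are using those particular generators rather than some other generating set for the same ideal. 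Also check that the leading coefficients are units, since you are working over $\Z$ rather than a field. With those points nailed down, your argument goes through.
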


These basis elements can be visualized by looking at the associated Dyck path for the Hessenberg vector. Given a Hessenberg vector $h$ of length $n$, construct a path $D_h$ in the $(x,y)$-plane from $(0,0)$ to $(n,n)$ using the steps $\langle 0,1\rangle$ and $\langle 1,0\rangle$ as follows: 
For each $1\leq k\leq n$, if $h(k)=j$, then include the step $\langle 1,0\rangle$ starting at the point $(k-1,j)$. Since each $k$ occurs only once, there will be $n$ steps of the form $\langle 1,0\rangle$, and exactly one way to fill in the rest of the path using $\langle 0,1\rangle$ steps. 

Given a Dyck path $D_h$ for a Hessenberg vector $h$, we visualize the basis elements as follows: For each $k$, let $b_k$ be the number of full boxes below the path and above the main diagonal. Then $b_k=h(k)-k$, so $b_k$ is the largest power allowed on the $x_k$ in a basis element.

Regular nilpotent Hessenberg varieties are useful in the study of regular semisimple Hessenberg varieties in the following way: When $S$ is regular semisimple, $\Sn$ acts on $H^{*}(\Hess(S,h))$ by Tymoczko's dot action, and the fixed points of this action return $H^{*}(\Hess(N,h))$.

\begin{proposition}[\cite{AbHaHoMa}, Theorem B]\label{prop:regular_to_nilpotent_iso}
    Let $N$ be a regular nilpotent matrix, $S$ be a regular semisimple matrix, and let $h$ be any Hessenberg function of length $n$. Then there exists an isomorphism of graded algebras $$ \mathcal{A}: H^{*}(\Hess(N,h)) \to H^{*}(\Hess(S,h))^{\Sn}$$ where $H^{*}(\Hess(S,h))^{\Sn}$ is the set of $\Sn$-fixed points of $H^{*}(\Hess(S,h))$.
\end{proposition}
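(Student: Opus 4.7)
The plan is to construct $\mathcal{A}$ directly on generators, verify that it is a well-defined ring homomorphism landing in the $\Sn$-fixed subring, and then confirm bijectivity via a graded dimension count. Both sides admit descriptions in terms of Chern classes of the tautological quotient line bundles $V_k/V_{k-1}$: for each $1 \leq k \leq n$, write $x_k^{\mathrm{nil}} \in H^*(\Hess(N,h))$ and $x_k^{\mathrm{ss}} \in H^*(\Hess(S,h))$ for these classes. In the GKM picture of Proposition \ref{prop:eqcohoring}, $x_k^{\mathrm{ss}}$ is represented by the tuple $(t_{w(k)})_{w \in \Sn}$, which is preserved by Tymoczko's action $(v \cdot \alpha)(w) = v \cdot \alpha(v^{-1}w)$. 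Hence each $x_k^{\mathrm{ss}}$ already lies in $H^*(\Hess(S,h))^{\Sn}$, so it is natural to declare $\mathcal{A}(x_k^{\mathrm{nil}}) := x_k^{\mathrm{ss}}$ and extend multiplicatively.

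The well-definedness step amounts to showing that every relation in a given presentation of $H^*(\Hess(N,h))$ holds in $H^*(\Hess(S,h))$. In the special case $h = (h(1), n, \ldots, n)$ central to this paper, the regular nilpotent ideal is cut out by the elementary symmetric polynomials together with relations that become consequences of relations (3)--(5) of Proposition \ref{thm:AHM4.3} once one eliminates the $y_k$; for general $h$, one can invoke the explicit presentation of $H^*(\Hess(N,h))$ due to Abe--Harada--Horiguchi--Masuda and check each relation directly. Grading is preserved by construction since $\deg(x_k) = 2$ on both sides.

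For bijectivity I would match graded dimensions. On the surjectivity side, the higher Specht decomposition (Theorem \ref{thm:A} in our case, and analogous results for general $h$) shows that the $\Sn$-fixed subspace of $H^*(\Hess(S,h))$ is spanned precisely by those monomial basis elements that involve only $x_1, \ldots, x_n$, i.e.\ the set $B_1$. This is exactly the image of $\mathcal{N}_h$ from Proposition \ref{nilpotentbasis} under $\mathcal{A}$, once one checks that the exponent constraints on the two sides agree. For injectivity, compare Poincar\'e polynomials: $\mathrm{Poin}(\Hess(N,h),q) = \prod_{k=1}^{n}[h(k)-k+1]_q$, while by Proposition \ref{thm:BC129} the graded dimension of $H^*(\Hess(S,h))^{\Sn}$ equals the coefficient of $s_{(n)}$ in $\omega X_{G_h}(\xvars;q)$, which by Gasharov's $P$-tableau formula (or direct Frobenius reciprocity) matches the same Hessenberg product.

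The main obstacle will be the well-definedness step for general $h$, where the defining relations of $H^*(\Hess(N,h))$ become delicate Hessenberg-dependent polynomials whose vanishing in $H^*(\Hess(S,h))$ is not transparent. A cleaner alternative is to pass through equivariant cohomology: lift $\mathcal{A}$ to a map $H^*_T(\Hess(N,h)) \to H^*_T(\Hess(S,h))^{\Sn}$, identify both with explicit subrings of polynomial rings via their GKM descriptions (with $\Sn$ acting on the latter by simultaneous permutation of the $\Sn$-indexed factors and of the $t_i$), and only then quotient by $(t_1,\ldots,t_n)$. Because this ideal is $\Sn$-invariant, the quotient commutes with taking $\Sn$-fixed points, so the comparison of equivariant rings descends immediately to the ordinary cohomology, and the Poincar\'e-polynomial argument above certifies that the resulting map is the desired graded algebra isomorphism.
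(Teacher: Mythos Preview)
The paper does not prove this proposition at all: it is quoted verbatim from \cite{AbHaHoMa} (Theorem B) and used as a black box. So there is no ``paper's own proof'' to compare against, and your proposal is an attempt to reprove a cited result.

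That said, your sketch has two genuine gaps. First, the claim that ``the exponent constraints on the two sides agree'' is false already for $h=(h(1),n,\ldots,n)$: the monomials in $\mathcal{N}_h$ satisfy $i_1\le h(1)-1$ and $i_k\le n-k$ for $k\ge 2$, whereas $B_1$ allows $i_1\le n-1$ but forbids the single factor $\prod_{\ell=1}^{h(1)}x_\ell$. These are different sets of monomials (though they have the same cardinality and span the same subspace of the quotient), so $\mathcal{A}(\mathcal{N}_h)$ is \emph{not} literally $B_1$, and your surjectivity step as written does not go through. Second, your ``cleaner alternative'' via equivariant GKM theory breaks down on the nilpotent side: $\Hess(N,h)$ carries only a one-dimensional torus action (the full maximal torus does not preserve it, since $N$ is not diagonal), so there is no GKM description of $H^*_T(\Hess(N,h))$ analogous to Proposition~\ref{prop:eqcohoring}. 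The actual proof in \cite{AbHaHoMa} proceeds quite differently, via a family argument and localization, precisely because neither the generators-and-relations nor the GKM approach is straightforward for general $h$.
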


We can translate the action of $\Sn$ on $H^{*}(\Hess(S,h))$ to a trivial action on $H^{*}(\Hess(N,h))$. Since the action is trivial, the corresponding representation of $\Sn$ decomposes into the direct sum of one-dimensional trivial representations. Under the graded Frobenius map, these map to $q^d s_{(n)}$, for $d$ the degree of the grading on each subspace. From Equation (\ref{eq:HessPtabConnection}), we should be able to find a weight-preserving bijection between the set of monomial basis elements of $\mathcal{N}_h$ to the set of $P_h$-tableaux of shape $(n)' = (1^n)$.  

\begin{definition}
Define $\mathrm{PT}(h,\lambda)$ to be the set of $P_h$-tableaux of shape $\lambda$.     
\end{definition}

Let $\lambda=(1^n)$. We define a map $\varphi: \mathcal{N}_h \to \mathrm{PT}(h,\lambda)$ as follows.

\begin{definition}
Let $x_1^{i_1}\cdots x_n^{i_n}\in \mathcal{N}_h$. 
Begin with a $P_h$-tableau $T$ of a single box whose entry is $n$. For each $k=n-1,\ldots 1$, if $i_k = 0$, insert $k$ into a new box at the bottom of $T$, so $k$ occurs directly below some $\ell>k$. If $i_k>0$, then since $i_k\leq h(k)-k$, we have $k<h(k)$. List the entries $k+1,\ldots ,h(k)$, which already exist in $T$, in order from the lowest to highest row position in $T$. Insert $k$ in a new box directly above the $i_k$-th lowest entry of this list. Define $\varphi(x_1^{i_1}\cdots x_n^{i_n})$ to be the resulting tableau from this process.
\end{definition}

\begin{example}
    Let $h=(2,3,5,5,5)$, and consider the monomial $x_1x_3x_4\in \mathcal{N}_h$. We construct $\varphi(x_1x_3x_4)$ as follows.

\begin{equation*}
    \begin{ytableau} \none \\ \none \\ \none \\ \none \\ 5 \end{ytableau} \hspace{1cm}
    \begin{ytableau} \none \\ \none \\ \none \\ 4 \\ 5 \end{ytableau} \hspace{1cm}
    \begin{ytableau} \none \\ \none \\ 4 \\ 3 \\ 5 \end{ytableau} \hspace{1cm}
    \begin{ytableau} \none \\ 4 \\ 3 \\ 5 \\ 2 \end{ytableau} \hspace{1cm}
    \begin{ytableau} 4 \\ 3 \\ 5 \\ 1 \\ 2 \end{ytableau} \hspace{1cm}
\end{equation*}
We start with a single box containing a $5$. Then, to insert $4$ with $i_4=1$ $P_h$-inversion, we insert the $4$ above the $5$. To insert $3$ with $i_3=1$ $P_h$-inversion, we insert the $3$ above the $5$ but below the $4$. Similarly insert a $2$ with no $P_h$-inversions, and a $1$ with one $P_h$-inversion. Notice that at each step, the number of elements in $P_h$ greater than $k$ that are incomparable to $k$ is $h(k)-k$, which is also the largest possible power $i_k$. 

\end{example}

\begin{theoremD}
The map $\varphi$ is a well-defined, weight-preserving bijection. 
\end{theoremD}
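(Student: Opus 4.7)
The plan is to establish three properties in order: well-definedness of the map, weight-preservation, and bijectivity.

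For well-definedness, because $\varphi(x_1^{i_1}\cdots x_n^{i_n})$ has shape $(1^n)$, the only nontrivial $P_h$-tableau condition is the column condition: whenever $j$ sits directly above $i$, one must have $j\not<_{P_h} i$, i.e., $h(j)\geq i$. I will check this by induction on the insertion step (from $k=n$ down to $k=1$). At step $k$, the new entry creates at most two new vertical adjacencies. The entry $\ell$ directly below $k$ (when it exists) is either the $i_k$-th lowest element of $\{k+1,\ldots,h(k)\}$ or has $k$ placed at the bottom; in either case $\ell\leq h(k)$, giving $h(k)\geq\ell$. The entry $m$ directly above $k$ satisfies $m>k$ since it was already present, so $h(m)\geq m>k$. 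Hence both new adjacencies respect the column condition.

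For weight-preservation, I will show that inserting $k$ with parameter $i_k$ creates exactly $i_k$ new $P_h$-inversions while leaving prior inversions unchanged. A pair $\{k,\ell\}$ with $k<\ell$ contributes a $P_h$-inversion precisely when $\ell$ sits below $k$ and $k,\ell$ are incomparable. Incomparability is equivalent to $\ell\in\{k+1,\ldots,h(k)\}$, since $k<_{P_h}\ell$ iff $h(k)<\ell$, and $\ell<_{P_h} k$ is impossible for $\ell>k$ because $h(\ell)\geq\ell>k$. By construction, exactly $i_k$ elements of this list end up below $k$. Summing over $k$ gives total inversions $\sum_k i_k$, matching the half-degree of the monomial.

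For bijectivity, I will construct the inverse $\psi\colon\mathrm{PT}(h,(1^n))\to\mathcal{N}_h$ that sends a $P_h$-tableau $T$ to $x_1^{i_1}\cdots x_n^{i_n}$, where $i_k$ counts the entries of $\{k+1,\ldots,h(k)\}$ strictly below $k$ in $T$. The bound $0\leq i_k\leq h(k)-k$ is immediate, so $\psi(T)\in\mathcal{N}_h$. The key observation is an insertion invariant: when subsequent entries $k'<k$ are added, they are placed at new positions but do not rearrange already-placed entries relative to one another. Consequently, the number of entries of $\{k+1,\ldots,h(k)\}$ below $k$ in the final tableau equals the value $i_k$ used when $k$ was inserted, giving $\psi\circ\varphi=\mathrm{id}$. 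For $\varphi\circ\psi=\mathrm{id}$, I will show that the insertion rule applied in decreasing order, using the extracted $i_k$'s, reconstructs $T$ uniquely, since at step $k$ the position of $k$ among the already-placed entries is completely determined by specifying how many entries of $\{k+1,\ldots,h(k)\}$ lie below it.

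The main obstacle I anticipate is making the insertion invariant precise and verifying that it holds throughout the procedure; once this is established, the three steps fit together cleanly, and no delicate calculation remains. The column condition and the inversion count are direct case analyses, and the inverse $\psi$ is essentially forced by the weight-preservation argument.
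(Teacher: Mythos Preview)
Your argument is correct and mirrors the paper's approach: the same inverse $\psi$ (counting inversions with $k$ as the smaller element) and the same two-composition check, with well-definedness and weight-preservation handled just as the paper does.

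The one place that needs more than you have written is the $\varphi\circ\psi$ direction. Your assertion that the position of $k$ among the already-placed entries $\{k+1,\ldots,n\}$ is ``completely determined'' by how many of $\{k+1,\ldots,h(k)\}$ lie below it is false for an arbitrary column---entries of $\{h(k)+1,\ldots,n\}$ could a priori sit immediately below $k$, leaving several positions with the same $i_k$-count---and it is precisely here that the $P_h$-tableau hypothesis does real work. What you need is that in the restriction of $T$ to $\{k,\ldots,n\}$, the entry directly below $k$ (if any) must lie in $\{k+1,\ldots,h(k)\}$. This follows from the column condition: if entries $b_1,\ldots,b_r<k$ separate $k$ from the next larger entry $j$ in $T$, then $b_r\not<_{P_h}j$ gives $h(b_r)\geq j$, and monotonicity of $h$ with $b_r<k$ gives $h(k)\geq h(b_r)\geq j$. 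The paper establishes the analogous fact by inducting on $n$ and noting that $1$ cannot lie directly above any entry exceeding $h(1)$; once you supply this step in your downward-induction packaging, your proof is complete. Note also that the ``insertion invariant'' you flag as the main obstacle (relative order of placed entries is preserved) is actually the trivial direction; the subtlety lives entirely in the point above.
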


\begin{proof}
First, at each insertion step, if $i_k=0$, then $k$ is inserted at the bottom of the tableau, so the entry immediately above $k$ is larger, so adjacent entries are still $P_h$-non-decreasing. If $i_k>0$, by the definition of $P_h$, $k$ is incomparable to any $k<\ell \leq h(k)$, so adding $k$ above one of these $\ell$ maintains that the column of $T$ is $P_h$-non-decreasing. Hence after each insertion, $T$ is still a $P_h$-tableau, and so $\varphi$ is a well-defined function.

Now we show that $\varphi$ is injective: Consider a function $\psi: PT(h,\lambda) \to \mathcal{N}_h$ defined as follows: Given a $P_h$-tableau $T$ of shape $(1^n)$, let $\psi(T) = x_1^{i_1}\cdots x_n^{i_n}$ where $i_k$ is the number of inversions of $T$ with $k$ as the smaller entry. For each $k = 1,\ldots, n$, note that $k$ can form an inversion as the smaller entry with at most $h(k)-k$ entries: $k+1,\ldots, h(k)$. Hence each $i_k$ is between $0$ and $h(k)-k$, so $\psi(T)$ is in $\mathcal{N}_h$, and thus $\psi$ is well-defined. Further, given a monomial $x_1^{i_1}\cdots x_n^{i_n} \in \mathcal{N}_h$, $\varphi(x_1^{i_1}\cdots x_n^{i_n})$ is constructed so that each $k$ is inserted to form $i_k$ inversions as the smaller entry, since each $k$ is inserted above $i_k$ incomparable elements greater than it. Thus for any $T\in PT(h,\lambda)$, we have $\varphi(\psi(T))=T$.

Next, given $x_1^{i_1}\cdots x_n^{i_n} \in \mathcal{N}_h$, we show by induction on $n$ that $\varphi(x_1^{i_1}\cdots x_n^{i_n})$ is the unique $P_h$-tableau of shape $(1^n)$ that maps to $x_1^{i_1}\cdots x_n^{i_n}$ under $\psi$. When $n=1$, the only monomial in $\mathcal{N}_h$ is $1$, and the only $P_h$-tableau consists of a single box containing a $1$. Given $x_1^{i_1}\cdots x_n^{i_n}\in \mathcal{N}_h$, by the induction hypothesis, there is a unique $T$ with entries $2,\ldots, n$ such that $\psi(T) = x_2^{i_2}\cdots x_n^{i_n}$. Now consider the choices for where the $1$ entry can be inserted. If the $1$ is inserted in the bottom row, then it creates no new inversions and $\psi(T) = x_1^0x_2^{i_2}\cdots x_n^{i_n}$. If the $1$ is inserted above any entry $\ell>h(1)$, then $1$ and $\ell$ are comparable in $P_h$, and so $T$ is no longer a valid $P_h$-tableau. If the $1$ is inserted above any entry $\ell$ with $2\leq \ell \leq h(1)$, then there the same number of entries from the list $2,\ldots, h(1)$ below the $1$ in $T$ as there are inversions with $1$ as the smaller entry. Hence, for any $i_1$, there is exactly one way of inserting $1$ into $T$ so that $\psi(T) = x_1^{i_1}\cdots x_n^{i_n}$. Thus $\varphi(x_1^{i_1}\cdots x_n^{i_n})$ is the unique tableau with $\psi(\varphi(x_1^{i_1}\cdots x_n^{i_n})) = x_1^{i_1}\cdots x_n^{i_n}$, and therefore $\varphi$ is a bijection.
   
Lastly, we note that for any $x_1^{i_1}\cdots x_n^{i_n}$, by construction, $T = \varphi(x_1^{i_1}\cdots x_n^{i_n})$ has $i_k$ inversions with $k$ as the smaller entry for all $k$, so $\deg(x_1^{i_1}\cdots x_n^{i_n}) = \inv(T) = i_1+\cdots +i_n$.
\end{proof}


\subsection{The Regular Semisimple Case}

Now we turn our attention to regular semisimple Hessenberg varieties. Recall the partial set of basis elements $B_1$ of the cohomology ring $H^{*}(\Hess(S,h))$ when $h=(h(1),n,\ldots, n)$:
$$ B_1 = \left\{ x_1^{i_1} x_2^{i_2} \cdots x_n^{i_n} \textrm{ without a factor of } \prod_{\ell=1}^{h(1)} x_{\ell} \,\Bigg|\, 0\leq i_j \leq n-j\right\} $$

Let $\lambda = (1^n)$. We will define a map $\varphi$ between $B_1$ and $\mathrm{PT}(h,\lambda)$, and the corresponding inverse map $\psi$. 

\begin{definition}
Let $x_1^{i_1}\cdots x_n^{i_n}\in B_1$. Begin with a $P_h$-tableau $T$ of a single box whose entry is $n$. For each $k=n-1,\ldots, 1$, insert $k$ into $T$ above exactly $i_k$ of the existing entries. 

Let $k'$ be the smallest index in $1,\ldots, h(1)$ such that $i_{k'}=0$, which exists by the definition of $B_1$. By the above construction, after inserting $1$ through $n$, $k'$ will be on the bottom of $T$. If $k'=1$, then set $\varphi(x_1^{i_1}\cdots x_n^{i_n})$ to be $T$. If $1<k'\leq h(1)$, then slide the entry $k'$ up until it is directly below the $1$, and define $\varphi(x_1^{i_1}\cdots x_n^{i_n})$ to be $T$ after this slide. 
\end{definition}

We now define the map $\psi$ from $\mathrm{PT}(h,\lambda)$ to $B_1$. Let $P_{N}$ be the poset corresponding to $h=(n,\ldots, n)$, in which all elements are incomparable.

\begin{definition}
Let $T\in \mathrm{PT}(h,\lambda)$. If $1$ is in the bottom row or the second row, let $\hat{T}=T$. Otherwise, construct $\hat{T}$ by shifting the entry directly below the $1$ to be in the bottom row of $T$. Define $\psi(T)$ to be $x_1^{i_1}\cdots x_n^{i_n}$ where $i_k$ is the number of $P_{N}$-inversions in $\hat{T}$ with $k$ as the smaller entry. 
\end{definition}

\begin{example}
Let $h=(3,5,5,5,5)$, and consider the monomial $x_1^2 x_3x_4 \in B_1$. We construct $\varphi(x_1^2 x_3x_4)$ as follows. 

    \begin{equation*}
        \begin{ytableau} \none \\ \none \\ \none \\ \none \\ 5 \end{ytableau} \hspace{1cm}
        \begin{ytableau} \none \\ \none \\ \none \\ 4 \\ 5 \end{ytableau} \hspace{1cm}
        \begin{ytableau} \none \\ \none \\ 4 \\ 3 \\ 5 \end{ytableau} \hspace{1cm}
        \begin{ytableau} \none \\ 4 \\ 3 \\ 5 \\ 2 \end{ytableau} \hspace{1cm}
        \begin{ytableau} 4 \\ 3 \\ 1 \\ 5 \\ 2 \end{ytableau} \hspace{1cm}
        \begin{ytableau} 4 \\ 3 \\ 1 \\ 2 \\ 5 \end{ytableau} 
    \end{equation*}
We start with a single box containing a $5$. Then we insert the $4$ above one existing entry, the $3$ above one existing entry, the $2$ above no existing entries, and the $1$ above two existing entries. Since $1<_{P_h}5$ are comparable, the resulting tableau is not a $P_h$-tableau, so we shift the $2$ (which is incomparable to the $1$) to be directly below the $1$. In the second-to-last tableau, the number of $P_{N}$-inversions with each $k$ as the smaller entry is exactly $i_k$, so reading these inversions returns the monomial $x_1^{2}x_3x_4$.
\end{example}

\begin{theoremB}
The map $\varphi$ is a well-defined bijection. 
\end{theoremB}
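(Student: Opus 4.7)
The plan is to prove that $\varphi$ is a bijection by showing that the map $\psi$ defined just above the theorem is its two-sided inverse, after first checking that both maps actually land in the stated target sets.

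Well-definedness: for $\varphi$, the only nontrivial $P_h$-relations are $1 <_{P_h} j$ with $j > h(1)$, so a column tableau is a valid $P_h$-tableau iff the entry directly below $1$ (if any) is at most $h(1)$. If $k'=1$, then $1$ sits at the bottom after insertion and the condition is vacuous; if $k'>1$, then after the slide $k'$ lies directly below $1$, and $k'\leq h(1)$ by the definition of $k'$. For $\psi$, the exponents $i_k$ automatically satisfy $0\leq i_k\leq n-k$, and in all three cases in the construction of $\hat T$ (namely $1$ in the bottom row of $T$, $1$ in row $2$, or $1$ in row $\geq 3$), the bottom entry of $\hat T$ is either $1$ itself or the entry originally directly below $1$ in $T$; both are at most $h(1)$ by the $P_h$-tableau condition, so some $i_\ell$ with $\ell \leq h(1)$ vanishes and $\psi(T)\in B_1$.

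The key technical step is an inversion-table observation: the insertion procedure building $T'$ from $(i_1,\ldots,i_n)$ yields the unique column tableau on $\{1,\ldots,n\}$ in which each $k$ has exactly $i_k$ entries larger than itself sitting below it. Inserting from largest to smallest fixes the relative position of each $k$ among $\{k+1,\ldots,n\}$, and later insertions of smaller values never disturb that relative order. A direct consequence is that the bottom of $T'$ is the smallest index with $i_k=0$; for $x\in B_1$ this minimum automatically lies in $\{1,\ldots,h(1)\}$ and therefore equals $k'$.

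Given this, $\psi\circ\varphi=\mathrm{id}$ follows by comparing the slide and the shift directly: starting from $x\in B_1$, build $T'$ and slide $k'$ from the bottom to directly below $1$ to obtain $\varphi(x)$; applying $\psi$ then moves the entry directly below $1$ back down to the bottom, inverting the slide and recovering $T'$, whose inversion counts return $(i_1,\ldots,i_n)$. For $\varphi\circ\psi=\mathrm{id}$, starting with $T$ and letting $x=\psi(T)$, the inversion-table observation identifies the insertion tableau $T'$ with $\hat T$; the identification $k'=b$, where $b$ is the bottom entry of $\hat T$ (equivalently, $1$ itself when $1$ sits at the bottom of $T$, and the entry originally below $1$ in $T$ otherwise), guarantees that the slide in $\varphi$ exactly reverses the shift in $\psi$, returning $T$.

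The main obstacle is the third case in which $1$ sits in row $r\geq 3$ of $T$: here both the shift in $\psi$ and the slide in $\varphi$ cyclically reorganize the bottom $r-1$ entries, and care is required to see that the entry pulled up by the slide in $\varphi$ is precisely the one sent to the bottom by the shift in $\psi$. This reduces to the identity $k'=b$, which is exactly what the inversion-table lemma together with the inequality $b\leq h(1)$ supplies.
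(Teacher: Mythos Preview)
Your proposal is correct and follows essentially the same approach as the paper: both establish well-definedness of $\varphi$ and $\psi$, then verify they are mutual inverses by checking that the slide in $\varphi$ and the shift in $\psi$ undo each other, with the key identification being that the bottom entry of the intermediate tableau equals the index $k'$. Your treatment is somewhat more explicit in isolating the inversion-table lemma and in handling the three cases for the position of $1$, but the underlying argument is the same.
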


\begin{proof}
Let $x_1^{i_1}\cdots x_n^{i_n} \in B_1$. First we show that $\varphi(x_1^{i_1}\cdots x_n^{i_n})$ is a well-defined $P_h$-tableau. Since $h=(h(1),n,\ldots, n)$, the entries $2,\ldots, n$ are incomparable in $P_h$. So for $k=n-1,\ldots, 2$, inserting $k$ directly above $i_k$ existing entries in $T$ creates a $P_h$-tableau with exactly $i_k$ $P_h$-inversions where $k$ is the smaller entry. Then, inserting a $1$ above $i_1$ entries creates a $P_{N}$-tableau (but not necessarily a $P_{h}$-tableau) $T$ where the $1$ forms $i_1$ $P_{N}$-inversions. If $i_1=0$, then $\varphi(x_1^{i_1}\cdots x_n^{i_n}) = T$ is a $P_{h}$-tableau, since $1$ is not directly above anything comparable to it. 

If $i_1\neq 0$, then since $x_1^{i_1}\cdots x_n^{i_n}$ is in $B_1$, there exists some smallest index $k' \in \{2,\ldots, h(1)\}$ such that $i_{k'}=0$. By construction, $k'$ will be in the bottom row of $T$, so $k'$ forms no $P_h$-inversions as the smaller entry. Then, shifting $k'$ to be directly below the $1$ in $T$ creates a $P_h$-tableau, since $1$ is incomparable in $P_h$ to $k'$, and $k'$ is incomparable in $P_h$ to all other entries. Hence $\varphi(x_1^{i_1}\cdots x_n^{i_n})$ is a $P_h$-tableau, so $\varphi$ is well-defined. 

Now, let $T\in \mathrm{PT}(h,\lambda)$. If $1$ is in the bottom row, then it forms no $P_N$-inversions as the smaller entry, so the exponent on $x_1$ in $\psi(T)$ is $0$. When $1$ is in a higher row, we construct $T$ by shifting the entry $k'$ that is below the $1$ to the bottom of the tableau. So $k'$ forms no $P_N$-inversions as the smaller entry, and in $\psi(T)$, the exponent on $x_{k'}$ is zero. Since $T$ is a $P_h$-tableau, we have that $1\leq k'\leq h(1)$, so the monomial $\prod_{\ell=1}^{h(1)}x_{\ell}$ does not divide $\psi(T)$. Then, after the shift, $T$ is a $P_N$ tableau, so for all $j\in [n]$, $j$ can form a $P_N$-inversion as the smaller entry with at most $n-j$ other entries. Thus the exponent $i_j$ on $x_j$ satisfies $0\leq i_j\leq n-j$. Therefore $\psi(T)$ is in $B_1$, so $\psi$ is well-defined. 

Next, given $x_1^{i_1}\cdots x_n^{i_n}\in B_1$, we show that $\psi(\varphi(x_1^{i_1}\cdots x_n^{i_n}))=x_1^{i_1}\cdots x_n^{i_n}$. In the construction of $\varphi(x_1^{i_1}\cdots x_n^{i_n}),$ we constructed the intermediate $P_{N}$-tableau $T$ such that each $k$ in $1,\ldots, n$ has exactly $i_k$ $P_{N}$-inversions, before shifting the entry $k'$ from the bottom row to directly below the $1$. The map $\psi$ shifts the $k'$ back to the bottom row, and then counts the $P_{N}$-inversions with each $k$ as the smaller entry, which is how $T$ is originally constructed. Hence $\psi(\varphi(x_1^{i_1}\cdots x_n^{i_n}))= x_1^{i_1}\cdots x_n^{i_n}$. 

Further, for all $j = 1,\ldots, k'-1$, since $k'$ is below $j$ in $T$, we have that $(j,k')$ is an inversion with $j$ as the smaller entry, so the exponent on $x_j$ in $\psi(T)$ is nonzero. So $x_{k'}$ is the variable with smallest index that has an exponent of $0$. Therefore, in the construction of the intermediate tableau for $\varphi(\psi(T))$, $k'$ is in the bottom row, and so $k'$ is the entry that is shifted below the $1$. Further, by construction, the intermediate tableau has $i_k$ inversions with $k$ as the smaller entry, which is also true of $T$. Therefore $\varphi(\psi(T))=T$, and so we have that $\varphi$ is a bijection. 
\end{proof}

As with the nilpotent case, we have a bijection between the $\Sn$-fixed points of $H^{*}(\Hess(S,h))$ and $P_h$-tableaux of shape $(1^n)$ when $h=(h(1),n,\ldots, n)$. We now construct a bijection for the other monomials in $H^{*}(\Hess(S,h))$, given by the set $B_3$:
\begin{equation*} B_3 = \left\{ x_n^{\ell_1}\cdots x_{2}^{\ell_{n-1}}(y_{k+1}-y_1) \textrm{ with no factor of } \prod_{\ell=h(1)+1}^{n}x_{\ell} \,{\Bigg|}\, 0\leq \ell_j\leq n-j-1,\, 1\leq k\leq n-1\right\} \end{equation*}
\begin{definition}
    Define $\PSPT(h,\lambda)$ to be the set of pairs $(S,T)$, where $S$ is a standard Young tableau of shape $\lambda$ and $T$ is a $P_h$-tableau of shape $\lambda$. 
\end{definition}

Let $\mu = (2,1^{n-2})$ and $h=(h(1),n\ldots, n)$. We define a map $\varphi: B_3 \to \PSPT(h,\mu)$ as follows.

\begin{definition}
Let $x_n^{\ell_1}\cdots x_{2}^{\ell_{n-1}}(y_{k}-y_1)\in B_3$. First, let $S$ be the unique standard tableau of shape $\mu$ with entries $1$ and $k$ in the bottom row. Next, let $j$ be the largest entry among $h(1)+1, \ldots, n$ such that the exponent $\ell_{n-j+1}$ on $x_j$ is zero. Begin with a tableau $T$ with a single row of two boxes, containing a $1$ and $j$. Then for each $i\geq 2$, other than $j$, insert $i$ into the left column so that it is under exactly $(i-2)-\ell_{n-i+1}$ of the current entries in the left column. Define $\varphi(x_n^{\ell_1}\cdots x_{2}^{\ell_{n-1}}(y_{k}-y_1))$ to be the pair $(S,T)$ resulting from this construction.  
\end{definition}

We now define the inverse map $\psi:\PSPT(h,\mu)\to B_3$. 

\begin{definition}
    Let $I_h$ be the set of potential $P_h$-inversions in a $P_h$-tableau $T$, excluding those containing a $1$, so $I_h = \{(i,j)\,|\,1<i<j\textrm{ and }j\leq h(i)\}$. When $h=(h(1),n,\ldots, n)$, we get $I_h =\{(i,j)\,|\,1<i<j\}$. Let $(S,T) \in \PSPT(h,\mu)$. Let $1$ and $k$ be the entries in the bottom row of $S$. Let $\ell_{n-j+1}$ be the number of $P_h$-inversions in $I_h\setminus \Inv_h(T)$ with $j$ as the larger entry. Define $\psi(S,T)$ to be the monomial $x_n^{\ell_1}\cdots x_2^{\ell_{n-1}}(y_k-y_1)$ with $k$ and $\ell_{n-j+1}$ defined above. 
\end{definition}

\begin{example}
    Let $h=(3,5,5,5,5)$, and consider the monomial $x_5^2x_3(y_2-y_1) \in B_3$. Note that $j=4$ is the largest index where $x_j$ has an exponent of zero. We construct $\varphi(x_5^2x_3(y_2-y_1))$ as follows.
    \begin{equation*}
        S \,=\, \begin{ytableau} 5 \\ 4 \\ 3 \\ 1 & 2 \end{ytableau} \hspace{1cm} \hspace{1cm}
        \begin{ytableau}\none \\ \none \\ \none \\ 1 & 4 \end{ytableau} \hspace{1cm}
        \begin{ytableau}\none \\ \none \\ 2 \\ 1 & 4 \end{ytableau} \hspace{1cm}
        \begin{ytableau}\none \\ 3 \\ 2 \\ 1 & 4 \end{ytableau} \hspace{1cm}
        \begin{ytableau} 3 \\ 5 \\ 2 \\ 1 & 4 \end{ytableau} \,=\,T
\end{equation*}
$S$ is defined to be the unique standard Young tableaux of shape $(2,1^{n-2})$ with a $1$ and $2$ in the bottom row. Then, to form $T$, we start with a single row containing a $1$ and a $4$. We then insert a $2$ in the left column underneath $(2-2)-0 = 0$ entries, a $3$ in the left column underneath $(3-2)-1 = 0$ entries, and a $5$ in the left column underneath $(5-2)-2=1$ entry. 
    
To find the image under $\psi$ of this pair, the exponent $\ell_{n-j+1}$ on $x_j$ is the number of potential $P_h$-inversions $(i,j)$ with $i\neq 1$ that do not appear in $T$. The $2$ has no possible $P_h$-inversions as the larger entry, so $\ell_4=0$, but the $3$ is missing the inversion with $2$, so $\ell_{3}=1$. Since the $4$ is in the bottom row, it forms all possible $P_h$-inversions as the larger entry, so $\ell_2=0$. Then the $5$ forms an inversion with $3$, but not $2$ or $4$, so $\ell_1=2$. Hence $\psi(S,T)=x_5^{2}x_3(y_2-y_1)$.  
\end{example}

\begin{theoremC}
The map $\varphi$ is a well-defined bijection.
\end{theoremC}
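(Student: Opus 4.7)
The plan is to show $\varphi$ outputs valid pairs in $\PSPT(h,\mu)$, check that the displayed $\psi$ lands in $B_3$, and verify that the two compositions are identities.

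First, I would verify that $\varphi$ is well-defined. The index $j$ exists precisely because the absence of the factor $\prod_{\ell=h(1)+1}^{n}x_{\ell}$ forces some exponent among $\ell_1,\ldots,\ell_{n-h(1)}$ (the exponents of $x_n,x_{n-1},\ldots,x_{h(1)+1}$) to vanish. The standard tableau $S$ is clearly unique of its shape with $1$ and $k$ in the bottom row. For $T$: the bottom row $(1,j)$ is $P_h$-valid because $j\geq h(1)+1$ gives $1<_{P_h}j$; the entries $\{2,\ldots,n\}\setminus\{j\}$ placed above $1$ in the left column are pairwise $P_h$-incomparable under $h=(h(1),n,\ldots,n)$ and none is $<_{P_h}1$, so any vertical ordering yields a $P_h$-tableau. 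The insertion position $(i-2)-\ell_{n-i+1}$ lies in $[0,i-2]$ since $0\leq \ell_{n-i+1}\leq i-2$, which is the condition $0\leq \ell_j\leq n-1-j$ rewritten for the exponent on $x_i$.

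Next I would check that $\psi$ lands in $B_3$. For each $m\in\{2,\ldots,n\}$, the quantity $\ell_{n-m+1}$ counts the $i\in\{2,\ldots,m-1\}$ not above $m$ in $T$, so $0\leq \ell_{n-m+1}\leq m-2$, matching the allowed range. Letting $j^{*}$ denote the entry in position $(1,2)$ of $T$, the $P_h$-tableau condition $j^{*}>_{P_h}1$ forces $j^{*}\geq h(1)+1$, and every $i\in\{2,\ldots,j^{*}-1\}$ sits in a higher row than $j^{*}$, giving $\ell_{n-j^{*}+1}=0$. Hence the resulting monomial misses the factor $\prod_{\ell=h(1)+1}^{n}x_{\ell}$, confirming $\psi(S,T)\in B_3$.

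Now for the two compositions. For $\psi\circ\varphi=\mathrm{id}$: the value $k$ is recovered from the bottom row of $S$; for each $m\in\{2,\ldots,n\}\setminus\{j\}$, inserting $m$ above $(m-2)-\ell_{n-m+1}$ current column entries leaves exactly $\ell_{n-m+1}$ entries from $\{2,\ldots,m-1\}$ below $m$ (the bookkeeping splits into the cases $j<m$ and $j>m$, since in the first case $j$ itself contributes a non-inversion from row $1$ and in the second it does not, and an easy count in each case gives $\ell_{n-m+1}$), while no later insertion disturbs these relative positions. For $\varphi\circ\psi=\mathrm{id}$: the essential point is that $\varphi$'s chosen $j$ equals $T$'s $j^{*}$; any $m>j^{*}$ in the left column lies above row $1$, so $j^{*}$ contributes a non-inversion below $m$, forcing $\ell_{n-m+1}\geq 1$, which means no index larger than $j^{*}$ has zero exponent, so $j^{*}$ is indeed the largest such. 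Once $j^{*}$ is identified, the insertion procedure reproduces the left column of $T$ entry by entry.

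The main obstacle will be the bookkeeping that tracks whether $j$ lies below or above each $m$ in integer order, and correspondingly reconciles the count of entries below $m$ at the time of insertion with the count in the final tableau. The bottom-right entry $j$ is responsible both for the off-by-one that forces $\ell_{n-j^{*}+1}=0$ and for the inequality $\ell_{n-m+1}\geq 1$ for all $m>j^{*}$; pinning these counts down exactly is what makes both compositions the identity.
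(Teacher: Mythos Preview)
Your approach is essentially the same as the paper's: verify that $\varphi$ and $\psi$ are well-defined, then check both compositions are identities, with the identification $j=j^{*}$ as the linchpin for $\varphi\circ\psi$. One point deserves an extra sentence in your well-definedness argument for $\varphi$: when $i>j$, the left column at the moment of inserting $i$ has only $i-2$ entries (since $j$ is absent), so the bound $(i-2)-\ell_{n-i+1}\leq i-2$ alone would allow $i$ to land below the $1$; you need the observation that the maximality of $j$ forces $\ell_{n-i+1}\geq 1$ for all $i>j$, which you already state (in the $\varphi\circ\psi$ paragraph) but should invoke here as well. With that in place your outline matches the paper's proof.
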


\begin{proof}
Let $x_n^{\ell_1}\cdots x_2^{\ell_{n-1}}(y_k-y_1)\in B_3$ and suppose that $\varphi(x_n^{\ell_1}\cdots x_2^{\ell_{n-1}}(y_k-y_1)) = (S,T)$. By construction, $S$ is a standard tableau. Since $x_{h(1)+1}\cdots x_n$ does not divide the given monomial, there is some largest $j$ among $h(1)+1,\ldots, n$ such that the exponent $\ell_{n-j+1}$ on $x_j$ is zero. In the construction of $T$, the initial tableau has a row containing a $1$ and $j$. Since $j\geq h(1)+1$, we have that $1\to j$ forms a chain in $P_h$, so this row forms a valid $P_h$-tableau. Note that, by the definition of $B_3$, for each $i$, $0\leq \ell_{n-i+1}\leq i-2$. Then, at each insertion of $i\geq 2$, $i$ is inserted underneath at most $i-2$ entries in the column. If $i<j$, then $i$ will not be inserted beneath the $1$ in the bottom row. If $i>j$, then by the definition of $j$, $\ell_{n-j+1}\geq 1$, so $i$ is inserted underneath at most $i-3$ entries in the left column, and again will not be inserted under the $1$ in the bottom row. Since all of $2,\ldots, n$ are incomparable in $P_h$, after each $i$ is inserted, $T$ will remain a $P_h$ tableau. Hence the map $\varphi$ is well-defined. 

Now suppose $T$ is a $P_h$ tableau of shape $\mu$. For each $x_j$, the number of $P_h$-inversions $(i,j)$ of $T$ with $1<i<j$ is at most $j-2$, so the exponent $\ell_{j}$ on $x_{n-j+1}$ is at most $(n-j+1)-2 = n-j-1$. Next, by the definition of $P_h$-tableau, the bottom row of $T$ is a chain in $P_h$, so it has entries $1$ and $m$ for some $m>h(1)$. Then all $P_h$-inversions $(i,m)$ with $1<i<m$ are present in $T$, so the exponent $\ell_{n-m+1}$ on $x_m$ is $0$. Hence $\prod_{\ell=h(1)+1}^{n}x_{\ell}$ does not divide $\psi(T,S)$. Finally, if $S$ is a standard tableau, then the bottom row must contain entries $1$ and $k$ for some $2\leq k\leq n$. Hence $\psi(T,S) \in B_3$, and so $\psi$ is well-defined.

We now show that $\varphi$ and $\psi$ are inverses. For $x_n^{\ell_1}\cdots x_2^{\ell_{n-1}}(y_k-y_1)\in B_3$, note that the pair $\varphi(x_n^{\ell_1}\cdots x_2^{\ell_{n-1}}(y_k-y_1)) = (S,T)$ is constructed so that each entry $i=2,\ldots,n$ in $T$ (excluding $j>1$ in the bottom row) forms $i-2 - \ell_{n-i+1}$ $P_h$-inversions as the larger value. So there are $\ell_{n-i+1}$ $P_h$-inversions in $I_h\setminus \Inv(T)$ with $i$ as the larger entry. Further, $S$ is constructed so that the bottom row of $S$ corresponds to the factor $(y_k-y_1)$. Thus $\psi(\varphi(x_n^{\ell_1}\cdots x_2^{\ell_{n-1}}(y_k-y_1))) = x_n^{\ell_1}\cdots x_2^{\ell_{n-1}}(y_k-y_1)$. 

Next, note that, at each step of the construction of $T$ from $x_n^{\ell_1}\cdots x_2^{\ell_{n-1}}(y_k-y_1)$, since $2,\ldots, n$ are all incomparable in $P_h$, inserting an entry $i>1$ into the left column below $k$ existing entries gives $k$ $P_h$-inversions with $i$ as the larger entry. Hence there is a unique place to insert $i$ so that $i$ forms $(i-2)-\ell_{n-i+1}$ $P_h$-inversions as the larger entry. Thus $\varphi(x_n^{\ell_1}\cdots x_2^{\ell_{n-1}}(y_k-y_1))$ is the unique pair of tableaux such that $\psi$ maps the pair to $x_n^{\ell_1}\cdots x_2^{\ell_{n-1}}(y_k-y_1)$. Therefore $\psi$ is a left-inverse of $\varphi$, and so $\varphi$ is a bijection.
\end{proof}

\section{New Proof of the Poincar\'{e} Polynomial}\label{sec:poincarepolynomial}

Recall that, given a graded vector space $V$ over a field $k$, if $V = \bigoplus_{i\in \N} V_i$ with each subspace $V_i$ consisting of vectors of degree $i$ being finite dimensional, then the Poincar\'{e} polynomial of $V$ is $$\mathrm{Poin}(V,q) = \sum_{i\in \N} \dim_k(V_i) q^{i} $$

In Equation \ref{eq:HessPtabConnection}, we saw that we could express the image of regular semisimple Hessenberg varieties under the graded Frobenius map in terms of Schur functions in the following way, due to the chromatic quasisymmetric function defined by Shareshian and Wachs: 
\begin{equation} \sum_{j=0}^{|E|} \mathrm{Frob}(H^{2j}(\Hess(S,h)))q^j = \omega X_G(\mathbf{x};q) = \sum_{\lambda\vdash n}\left( \sum_{T\in PT(\lambda)} q^{\inv_h(T)}\right) s_{\lambda'} \end{equation}

So, we can write the Poincar\'{e} polynomial of a regular semisimple Hessenberg variety as follows:
$$ \mathrm{Poin}(\Hess(S,h),q) = \sum_{\lambda\vdash n} \left( \sum_{T\in \mathrm{PT}(h,\lambda)} q^{\inv_h(T)}\right) \#\mathrm{SYT}(\lambda) $$ 
since the dimension of the irreducible representation $V_\lambda$ is the number of standard Young tableaux of shape $\lambda$. We use this formula to provide an alternate proof of the formula of the Poincar\'{e} polynomial of the regular semisimple Hessenberg variety $\Hess(S,h)$ when $h=(h(1),n,\ldots, n)$, originally proven by Abe, Horiguchi, and Masuda in \cite{AHM}.

\begin{theorem}[\cite{AHM}, Lemma 3.2]
If $h=(h(1),n,\ldots, n)$, then the Poincar\'{e} polynomial of $\Hess(S,h)$ is given by $$ \mathrm{Poin}(\Hess(S,h),q) = \frac{1-q^{h(1)}}{1-q}\,\prod_{j=1}^{n-1}\frac{1-q^j}{1-q} + (n-1)q^{h(1)-1}\frac{1-q^{n-h(1)}}{1-q}\,\prod_{j=1}^{n-2}\frac{1-q^j}{1-q} $$
\end{theorem}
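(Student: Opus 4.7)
The plan is to apply the displayed $P$-tableau formula and reduce the right-hand side to two $q$-enumerations that can be carried out directly. By Proposition \ref{cohomdecomp}, the $\Sn$-module $H^{*}(\Hess(S,h))$ is a direct sum of copies of $V_{(n)}$ and $V_{(n-1,1)}$ only, so in view of Equation (\ref{eq:HessPtabConnection}) the only outer indices $\lambda\vdash n$ that contribute are $\lambda=(1^n)$ (with $\#\mathrm{SYT}((1^n))=1$) and $\lambda=(2,1^{n-2})$ (with $\#\mathrm{SYT}((2,1^{n-2}))=n-1$). Writing $[k]_q:=(1-q^k)/(1-q)$ for brevity, it then suffices to show
\[
A:=\!\!\sum_{T\in \mathrm{PT}(h,(1^n))}\!\!q^{\inv_h(T)}=[h(1)]_q\,[n-1]_q!
\quad\text{and}\quad
B:=\!\!\sum_{T\in \mathrm{PT}(h,(2,1^{n-2}))}\!\!q^{\inv_h(T)}=q^{h(1)-1}[n-h(1)]_q\,[n-2]_q!,
\]
after which the theorem follows from $\mathrm{Poin}(\Hess(S,h),q)=A+(n-1)B$.

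For $B$, I would argue directly on the combinatorics of $\mathrm{PT}(h,(2,1^{n-2}))$: such a $T$ is determined by its lower-right entry $j\in\{h(1)+1,\ldots,n\}$ (the only way to make the bottom row into a chain $1<_{P_h} j$) together with an arbitrary permutation $\sigma$ of $\{2,\ldots,n\}\setminus\{j\}$ in the left column above the bottom row; the column condition is automatic because $\{2,\ldots,n\}$ are pairwise incomparable in $P_h$. The $P_h$-inversion statistic then splits as $\inv_h(T)=(j-2)+\inv(\sigma)$, where the first term counts the entries $2,3,\ldots,j-1$ (all sitting above $j$ in the column and incomparable to it in $P_h$) and the second is the ordinary inversion count of the column read bottom-to-top. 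Summing over $(j,\sigma)$ factorizes:
$B=\bigl(\sum_{j=h(1)+1}^{n}q^{j-2}\bigr)\,[n-2]_q!=q^{h(1)-1}[n-h(1)]_q\,[n-2]_q!$.

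For $A$, I would transport the sum across the weight-preserving bijection $\varphi:B_1\to\mathrm{PT}(h,(1^n))$ of Theorem \ref{thm:B}, which (as one verifies by tracking inversion counts through its construction) satisfies $\deg(b)/2=\inv_h(\varphi(b))$. Hence $A=\sum_{b\in B_1}q^{\deg(b)/2}$, and an inclusion--exclusion on the forbidden factor $\prod_{\ell=1}^{h(1)}x_{\ell}$ (substituting $i_j\mapsto i_j-1$ for $j=1,\ldots,h(1)$ in the ``bad'' count) yields
\[
A=\prod_{j=1}^{n}[n-j+1]_q \;-\; q^{h(1)}\prod_{j=1}^{h(1)}[n-j]_q\prod_{j=h(1)+1}^{n}[n-j+1]_q.
\]
Extracting the common factor $[n-1]_q!$ from both terms and applying the telescoping identity $[n]_q-q^{h(1)}[n-h(1)]_q=[h(1)]_q$ collapses the difference to $[h(1)]_q\,[n-1]_q!$, as required.

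The main obstacle is the careful bookkeeping in the inclusion--exclusion for $A$: after the substitution the ``bad'' product has $[n-h(1)]_q$ as a repeated factor and is missing the $[n]_q$ that appears in the unrestricted total, so the telescoping identity is essential to collapse the two products cleanly down to a single $[h(1)]_q$ factor. The decomposition $\inv_h(T)=(j-2)+\inv(\sigma)$ in the computation of $B$ is a smaller check and follows from the observation that $1$ sits at the bottom of the column and is comparable in $P_h$ to every $j>h(1)$, so $1$ contributes to neither piece.
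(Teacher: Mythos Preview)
Your argument is correct, and the computation of $B$ matches the paper's essentially verbatim. The treatment of $A$, however, takes a genuinely different route from the paper.

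The paper restricts to the two shapes $(1^n)$ and $(2,1^{n-2})$ by a direct combinatorial observation---every chain in $P_h$ has length at most two and contains $1$, so no $P_h$-tableau can have two rows of length $\geq 2$---rather than by invoking Proposition~\ref{cohomdecomp}. More substantively, for the single-column case the paper computes $A$ directly on the tableaux: the entry $1$ can sit above any subset of $\{2,\ldots,h(1)\}$, contributing the factor $[h(1)]_q$, and each entry $i\geq 2$ can independently form between $0$ and $n-i$ inversions as the smaller element, contributing $[n-i+1]_q$; the product is $[h(1)]_q\,[n-1]_q!$ with no inclusion--exclusion or telescoping needed.

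Your route instead transports the sum along the bijection $\varphi$ of Theorem~\ref{thm:B} to the monomial set $B_1$ and then performs inclusion--exclusion on the forbidden factor. This is valid, and the telescoping identity $[n]_q-q^{h(1)}[n-h(1)]_q=[h(1)]_q$ does collapse the difference as you claim. The cost is twofold: you import a dependence on Theorem~\ref{thm:B}, and you must separately verify that $\varphi$ is weight-preserving (the statement of Theorem~\ref{thm:B} asserts this, but the proof in Section~\ref{sec:bijections} establishes only bijectivity; the weight claim requires tracking how the final ``slide'' step affects $P_h$-inversions, which is a nontrivial check you have deferred). The paper's direct count avoids both of these, at the price of a small ad hoc argument about where $1$ can sit in the column. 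Your approach, on the other hand, illustrates concretely how the bijection encodes the Poincar\'e polynomial, which is in the spirit of the paper's broader goals.
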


Writing this in terms of $q$-analogues, we get the following expression: 
$$ \mathrm{Poin}(\Hess(S,h),q) = h(1)_q(n-1)_q! + (n-1)q^{h(1)-1}(n-h(1))_q(n-2)_q!$$

\begin{proof}
From above, we know that $$ \mathrm{Poin}(\Hess(S,h),q) = \sum_{\lambda\vdash n} \left( \sum_{T\in \mathrm{PT}(h,\lambda)} q^{\inv_h(T)}\right) \#\mathrm{SYT}(\lambda). $$
  
Let $h=(h(1),n,\ldots, n)$. All chains in $P_h$ have length two and include the element $1$. Since distinct rows in a $P_h$ tableaux need to contain entries from distinct chains in $P_h$, the only shapes $\lambda$ with a nonzero number of $P_h$-tableaux are $\lambda = (1^n)$ and $\mu = (2,1^{n-2})$. Further, we have that $\#\mathrm{SYT}(\lambda) = 1$ and $\#\mathrm{SYT}(\mu)=n-1$. 

For $\lambda=(1^n)$, we need to count the $P_h$-inversions in the $P_h$ tableaux of this shape. Since the element $1$ is incomparable to each of $2,\ldots, h(1)$, it can form between $0$ and $h(1)-1$ inversions as the smaller entry. For each $i=2,\ldots, n$, the entry $i$ can form up to $n-i$ inversions as the smaller entry. Hence, we get that $$ \sum_{T\in \mathrm{PT}(h,\lambda)}q^{\inv_h(T)} = (1+q+\cdots+q^{h(1)-1})(1+q+\cdots+q^{n-2})! = h(1)_q(n-1)_q!.$$

For $\mu=(2,1^{n-2})$, the bottom row of any $P_h$-tableaux of shape $\mu$ must be filled with entries from a chain in $P_h$, so it contains a $1$ and an $i$ for some $i=h(1)+1,\ldots, n$. Then, since $i>1$, it is incomparable with all other $j\neq 1$, so the entry $i$ in the bottom row forms inversions as the larger entry with the entries $2,\ldots, i-1$, of which there are $i-2$. So this entry contributes between $h(1)-1$ and $n-2$ inversions to the $P_h$-tableaux as the larger entry. Then, for the column entries of $j=2,\ldots, n$ and $j\neq i$, if $j<i$, then $j$ forms an inversion with $i$ where $j$ is the smaller entry (which was already counted), and can form an inversion as the smaller entry with the other $n-j-1$ entries larger than $j$. If $j>i$, then $j$ does not form an inversion with $i$, and can form an inversion as the smaller entry with any of the $n-j$ entries larger than $j$. In each case, there is a unique placement for $j$ giving each set of inversions. Hence we have $$ \sum_{T\in \mathrm{PT}(h,\mu)} q^{\inv_h(T)} = (q^{h(1)-1}+q^{h(1)}+\cdots+q^{n-1}) (1+q+\cdots+q^{n-3})! = q^{h(1)-1}(n-h(1))_q (n-2)_q! $$

Therefore, for $\lambda=(1^n)$ and $\mu=(2,1^{n-2})$, we have the Poincar\'{e} polynomial of $\Hess(S,h)$ as follows:
\begin{align*} \mathrm{Poin}(\Hess(S,h),q) &= \sum_{T\in \mathrm{PT}(h,\lambda)} q^{\inv_h(T)} + (n-1)\sum_{T\in \mathrm{PT}(h,\mu)} q^{\inv_h(T)} \\ &= h(1)_q (n-1)_q! + (n-1)q^{h(1)-1}(n-h(1))_q(n-2)_q! \end{align*}
This completes the proof. \end{proof}

\printbibliography

\end{document}